\title[Generalized deformations   of  solvmanifolds]
{Generalized deformations  and holomorphic Poisson cohomology  of  solvmanifolds}
\author{Hisashi Kasuya}
\theoremstyle{plain}
\theoremstyle{plain}
\theoremstyle{plain}
\theoremstyle{plain}
\newtheorem{theorem}{Theorem}[section] 
\theoremstyle{remark}
\newtheorem{remark}{Remark}
\theoremstyle{Main result}
\newtheorem{main result}{Main result}
\theoremstyle{lemma}
\newtheorem{lemma}[theorem]{Lemma}
\theoremstyle{definition}
\theoremstyle{proposition}
\newtheorem{proposition}[theorem]{Proposition}
\theoremstyle{corollary}
\newtheorem{corollary}[theorem]{Corollary}
\theoremstyle{remark}
\newtheorem{example}{Example}
\theoremstyle{assumption}
\newtheorem{Assumption}[theorem]{Assumption} 
\address[Hisashi Kasuya]{Department of Mathematics, Tokyo Institute of Technology, 1-12-1, O-okayama, Meguro, Tokyo 152-8551, Japan}
\email{kasuya@math.titech.ac.jp}
\keywords{Dolbeault cohomology,  solvmanifold,   deformation of generalized complex structure, holomorphic Poisson cohomology}
\subjclass[2010]{Primary:17B30, 32G05,  53D18, 58H15, 32M10}
\newcommand{\C}{\mathbb{C}}
\newcommand{\R}{\mathbb{R}}
\newcommand{\N}{\mathbb{N}}
\newcommand{\Q}{\mathbb{Q}}
\newcommand{\Z}{\mathbb{Z}}
\newcommand{\g}{\frak{g}}
\newcommand{\n}{\frak{n}}
\DeclareMathOperator{\Kur}{Kur}
\begin{document} 

\maketitle
\begin{abstract}
We describe the generalized Kuranishi spaces of solvmanifolds with left-invariant complex structures. 
By using such description, we study the stability of left-invariantness  of deformed generalized complex structures and smoothness of generalized Kuranishi spaces
on certain classes  of solvmanifolds. 
We also  give explicit finite dimensional cohcain complexes which computes the holomorphic Poisson cohomology of nilmanifolds and solvmanifolds.
\end{abstract}
\section{Introduction}
Let $G$ be a simply connected nilpotent Lie group with a lattice (i.e., cocompact discrete subgroup) $\Gamma$.
We call $G/\Gamma$ nilmanifold.
There are many studies on  deformations of  left-invariant complex structures on nilmanifolds.
In particular, there is the good result on the stability  of the  left-invariantness under deformations.
By the result in \cite{RO} (see also \cite{CFP}, \cite{MPPS}), 
if a nilmanifold $G/\Gamma$ with  a left-invariant complex structure $J$ satisfies the condition $(\ast)$"The Dolbeault cohomology can be completely computed by the Lie algebra of  $G$",  then the   Kuranishi space (locally complete family)  of the complex structure $J$ consists  entirely of left-invariant complex structures and thus every sufficiently small deformation of $J$ is equivalent to a left-invariant complex structure.
It is known that the class of nilmanifolds with left-invariant complex structures satisfying the condition $(\ast)$ is large (see Theorem  \ref{Nilcod} and \cite{ROc}).
Moreover, it is conjectured that every nilmanifold with left-invariant complex structure satisfies  the condition $(\ast)$.
Hence we can expect that every sufficiently small deformation of  a left-invariant complex structure on every nilmanifold is equivalent to a left-invariant complex structure.

Let $G$ be a simply connected solvable Lie group with a lattice  $\Gamma$.
We call $G/\Gamma$ solvmanifold.
Unlike nilmanifolds,  in \cite{Hd}, on a $3$-dimensional non-nilpotent complex parallelizable solvmanifold (called   Nakamura manifold \cite{Na}), Hasegawa remarks that there exists a small deformation   which is not equivalent to any left-invariant complex structure.
In general, we can not describe  the Kuranishi space of a solvmanifold $G/\Gamma$ with a left-invariant complex structure by only the Lie algebra of $G$.
We are interested in describing the Kuranishi spaces of a solvmanifolds with  left-invariant complex structures explicitly  and giving criteria for deciding whether every sufficiently small deformation is equivalent to a left-invariant complex structure.

The singularity  of the Kuranishi spaces of  nilmanifolds with left-invariant complex structures   is  also an interesting problem.
It is known that the Kuranishi spaces  of compact K\"ahler   manifolds with the trivial  canonical  bundles are smooth.
Nilmanifolds with left-invariant complex structures have the  trivial  canonical  bundles  but do not admit K\"ahler structures  unless they are tori (see \cite{BG}, \cite{H}).
Hence Kuranishi spaces of  nilmanifolds with left-invariant complex structures may be singular.
In \cite{ROP}, Rollenske studies the  singularity  of the Kuranishi spaces of complex parallelizable  nilmanifolds.

In this paper, we study generalized deformations i.e.,  deformations of complex structures as generalized complex structures as in \cite{Gua}.
We can construct the locally complete family (generalized Kuranishi space) for generalized  deformations which is an extension of the ordinary Kuranishi space (see Section \ref{GENK} and \cite{Gua}).
Let $(M,J) $ be a compact complex manifold.
We consider the space $A^{0,\ast}(M, \bigwedge T_{1,0}M)$ of $(0,\ast)$-differential forms with values in the holomorphic tangent poly-vector bundle with the  Schouten bracket $[\bullet]$ and the Dolbeault operator $\bar\partial$.
We denote 
\[dG^{r}(M,J)=\bigoplus_{p+q=r}A^{0,q}(M, \bigwedge^{p} T_{1,0}M).\]
Then $(dG^{\ast}(M,J), \wedge, [\bullet],  \bar\partial)$ is a differential Gerstenhaber algebra (shortly DGA).
The generalized Kuranishi space for generalized deformations of $J$ is controlled by the DGA $dG^{\ast}(M,J)$.
By DGA-techniques,  we can easily extend the Rollenske's results (see Proposition \ref{NILDE}, \ref{Cut}).

The main purpose of this paper is to  study  generalized deformations of left-invariant complex structures on solvmanifolds by using DGA-techniques.
We consider the following two classes of solvmanifolds with  left-invariant complex structures.
\begin{itemize}
\item  solvmanifolds of splitting type. (Section \ref{sss}) 
\item complex parallelizable solvmanifolds. (Section \ref{ParL})
\end{itemize}
They are large classes which contain Nakamura manifolds.
For a solvmanifold $G/\Gamma$ in these classes, we prove that we obtain  an explicit finite dimensional sub-DGA $C_{\Gamma}^{\ast}\subset dG^{\ast}(G/\Gamma,J)$ such that the inclusion induces a cohomology isomorphism (Theorem \ref{DGAI}, \ref{COMPAR}).
By this result, we can describe the generalized Kuranishi space explicitly (Corollary \ref{DEFCST11} ,\ref{DEFCST}). 
By this description, we obtain criterions for the stability  of the  left-invariantness under generalized deformations by using  certain characters on $G$ (Theorem \ref{lefiviv}, \ref{linpp}).
Moreover, we can give estimations of  the singularities of the generalized Kuranishi spaces of certain solvmanifolds (Theorem \ref{smsppp}, Corollary \ref{2dee}).

Let $(M,J)$ be a compact complex manifold.
A bi-vector field $\mu\in C^{\infty}(\bigwedge^{2}T_{1,0}M)$ is called holomorphic Poisson if $\bar\partial \mu=0$ and $[\mu\bullet \mu]=0$.
Let $\mu$ be a holomorphic Poisson bi-vector field $\mu$ on $M$.
Then we can define the cohomology determined by $\mu$ which is isomorphic to the Lie algebroid cohomology of generalized complex structure given by $\mu$ (see \cite{L-G}).
Unlike real Poisson cohomology, this cohomology is  always finite dimensional (see \cite{L-G}).
Our results on DGAs can be applied to holomorphic Poisson cohomology
on complex solvmanifolds in the above two classes, we can compute the holomorphic Poisson cohomology by explicit finite-dimensional complexes (Corollary \ref{posp}, \ref{popar})
By this result, we can find a remarkable  example. (see Section \ref{nakho})

\section{Notation and Conventions}

$\bullet$ "DGA" means "differential Gerstenhaber algebra".

$\bullet$ "DGrA" means "differential graded algebra".
(We must distinguish DGA and DGrA.)

$\bullet$  "DBiA" means "differential bi-graded algebra".

$\bullet$ We write $[n]=\{1,2,\dots, n\}=\{a\in \Z_{>0}\vert a\le n\}$.

$\bullet$ For a multi-index $I=\{i_{1},\dots,i_{p}\}$, products through $I$ are written shortly, for examples, $\alpha_{I}=\alpha_{i_{1}}\cdot\alpha_{i_{2}}\cdots\alpha_{i_{p}}$, $x_{I}=x_{i_{1}}\wedge x_{i_{2}}\wedge\dots \wedge x_{i_{p}}$ etc.

\section{DGAs of Lie algebras}
Let $G$ be a simply connected  Lie group with a left-invariant complex structure $J$.
and $\g$  the Lie algebra of $G$.
Consider the decomposition $\g\otimes \C=\g_{1,0}\oplus \g_{0,1}$ where $\g_{1,0}$ (resp. $\g_{0,1}$) is the $\sqrt{-1}$-eigenspace (resp. $\sqrt{-1}$-eigenspace) of $J$.
Then we can define the bracket $[\bullet]$ on $\g_{1,0}\oplus \g_{0,1}^{\ast}$ 
as 
\[\left[X+\bar x\bullet Y+\bar y\right]=[X,Y]+i_{X}d\bar y-i_{Y}d\bar x
\]
for $X,Y\in  \g_{1,0}$, $\bar x,\bar y\in \g_{0,1}^{\ast}$ where $d$ is the exterior differential which is  dual to the Lie bracket.
Extending this bracket on $\textstyle\bigwedge  (\g_{1,0}\oplus \g_{1,0}^{\ast})$ and considering the Dolbeault operator $\bar\partial$, $(\textstyle\bigwedge  (\g_{1,0}\oplus \g_{1,0\ast}),\;\wedge,\; [\bullet],\;\bar\partial)$ is a DGA.
We denote this DGA by $dG^{\ast}(\g, J)$.
\begin{remark}\label{repa}
Suppose $(G,J)$ is a complex Lie group.
Then since $\g_{1,0}$ and $\g_{0,1}^{\ast}$ consist of holomorphic vector fields and anti-holomorphic forms respectively, we have $i_{X}d\bar y=0$ and $\bar\partial X=0$ for $X\in \g_{1,0}$ and $\bar y\in \g_{0,1}^{\ast}$.
Hence  regarding $\textstyle\bigwedge  \g_{1,0}$ as a DGA with trivial differential and $\textstyle\bigwedge  \g_{0,1}^{\ast}$ as a DGA with trivial bracket, we have 
\[dG^{\ast}(\g,J)=\textstyle\bigwedge  \g_{1,0}\otimes \textstyle\bigwedge  \g_{0,1}^{\ast}
\]
as a tensor product of DGAs.
In particular we have
\[[ \textstyle\bigwedge ^{p}\g_{1,0} \otimes \bigwedge^{q}\g_{0,1}^{\ast}\bullet \bigwedge^{0}\g_{1,0} \otimes \bigwedge^{r}\g_{0,1}^{\ast}]=0.\]
\end{remark}

We suppose $G$ has a lattice (i.e. cocompact discrete subgroup) $\Gamma$.
Then we have the inclusion $dG^{\ast}(\g, J)\subset dG^{\ast}(G/\Gamma,J)$ of DGA.

We use the following theorem:
\begin{theorem}{\rm  (\cite[Theorem 1.3]{Sal}, \cite[Theorem 3.1]{CG})}\label{Cano}
Let $G$ be a simply connected $2n$-dimensional nilpotent Lie group with a left invariant structure $J$.
Then we have $d(\textstyle\bigwedge ^{n} \g_{1,0}^{\ast})=0$.
In particular for a lattice $\Gamma$, the canonical bundle $\textstyle\bigwedge  ^{n}T^{\ast}_{1,0}G/\Gamma$ of the nilmanifold $G/\Gamma$ is trivial.
\end{theorem}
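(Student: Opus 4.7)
The plan is to reduce $d\Omega=0$ for a generator $\Omega$ of the one-dimensional line $\bigwedge^{n}\g_{1,0}^{\ast}$ to a single trace computation on $\g_{1,0}$, and then kill that trace via Engel's theorem. First, integrability of $J$ gives $[\g_{0,1},\g_{0,1}]\subset\g_{0,1}$, and a standard computation on the Chevalley--Eilenberg complex shows $d(\bigwedge^{p,q})\subset\bigwedge^{p+1,q}\oplus\bigwedge^{p,q+1}$. For $\Omega\in\bigwedge^{n,0}$ the $\bigwedge^{n+1,0}$-component vanishes for dimension reasons, so $d\Omega\in\bigwedge^{n,1}$, and it suffices to verify $d\Omega(\bar Y,X_{1},\ldots,X_{n})=0$ for every $\bar Y\in\g_{0,1}$ and every basis $X_{1},\ldots,X_{n}$ of $\g_{1,0}$.

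Expanding via the Chevalley--Eilenberg formula, all contributions of the form $\Omega([X_{i},X_{j}],\bar Y,\ldots)$ vanish because $\Omega$ annihilates any $\g_{0,1}$-argument, so only the terms coming from the mixed brackets $[\bar Y,X_{j}]$ survive. Writing $\pi\colon\g\otimes\C\to\g_{1,0}$ for the projection along $\g_{0,1}$ and setting $A_{\bar Y}:=\pi\circ\mathrm{ad}_{\bar Y}|_{\g_{1,0}}\colon\g_{1,0}\to\g_{1,0}$, a routine rearrangement of signs should yield
\[
d\Omega(\bar Y,X_{1},\ldots,X_{n})=-\mathrm{tr}(A_{\bar Y})\,\Omega(X_{1},\ldots,X_{n}),
\]
so the theorem reduces to showing $\mathrm{tr}(A_{\bar Y})=0$ for every $\bar Y\in\g_{0,1}$.

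Here is where nilpotency enters. Since $\g$ is nilpotent, the complex Lie algebra $\g\otimes\C$ is nilpotent, so by Engel's theorem every $\mathrm{ad}_{\bar Y}$ acts nilpotently on $\g\otimes\C$ and thus has trace zero there. Using $\mathrm{ad}_{\bar Y}(\g_{0,1})\subset\g_{0,1}$, the total trace splits as
\[
0=\mathrm{tr}_{\g\otimes\C}(\mathrm{ad}_{\bar Y})=\mathrm{tr}(A_{\bar Y})+\mathrm{tr}_{\g_{0,1}}(\mathrm{ad}_{\bar Y}|_{\g_{0,1}}).
\]
Since $\g_{0,1}$ is a Lie subalgebra of the nilpotent $\g\otimes\C$, it is itself nilpotent, so $\mathrm{ad}_{\bar Y}|_{\g_{0,1}}$ is nilpotent and trace-free. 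This forces $\mathrm{tr}(A_{\bar Y})=0$ and hence $d\Omega=0$. The ``in particular'' statement is then immediate: the left-invariant $(n,0)$-form $\Omega$ descends to a nowhere-vanishing global section of $\bigwedge^{n}T^{\ast}_{1,0}G/\Gamma$, and because $d\Omega=\bar\partial\Omega$ on degree grounds, this section is holomorphic and trivializes the canonical bundle.

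The main obstacle is conceptual rather than computational: one has to see that, after integrability collapses the Chevalley--Eilenberg sum, the surviving expression is exactly the trace of a linear operator on $\g_{1,0}$, at which point nilpotency enters in precisely the right place by way of the Lie subalgebra $\g_{0,1}$. Once this viewpoint is adopted, everything else is a short computation.
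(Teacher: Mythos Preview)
Your proof is correct. Note, however, that the paper does not supply its own proof of this statement: Theorem~\ref{Cano} is quoted from \cite[Theorem 3.1]{CG} and used as a black box, so there is no in-paper argument to compare against.

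That said, your argument is essentially the standard one. The reduction of $d\Omega$ to $-\mathrm{tr}(A_{\bar Y})\,\Omega$ via the Chevalley--Eilenberg formula is clean and correctly carried out, and the block-triangular splitting of $\mathrm{ad}_{\bar Y}$ with respect to $\g_{1,0}\oplus\g_{0,1}$ (using that $\g_{0,1}$ is $\mathrm{ad}_{\bar Y}$-invariant by integrability) is exactly the right mechanism to isolate $\mathrm{tr}(A_{\bar Y})$. One small simplification: you do not need to invoke that $\g_{0,1}$ is itself nilpotent as a Lie algebra; it is enough to observe that $\mathrm{ad}_{\bar Y}$ is a nilpotent operator on $\g\otimes\C$, and the restriction of a nilpotent operator to an invariant subspace remains nilpotent, hence trace-free. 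The final step, that $d\Omega=0$ forces $\bar\partial\Omega=0$ on bidegree grounds and hence gives a holomorphic trivialization of the canonical bundle, is also correct.
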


By this theorem we have:

\begin{corollary}\label{nildoll}
Let $G$ be a simply connected $2n$-dimensional nilpotent Lie group with a left invariant structure $J$.
We suppose that the inclusion $\textstyle\bigwedge ^{\ast}\g_{1,0}^{\ast} \otimes \bigwedge^{\ast}\g_{0,1}^{\ast}\subset A^{\ast,\ast}(G/\Gamma)$ induces an isomorphism 
\[H^{\ast,\ast}_{\bar\partial}(\g)\cong H^{\ast,\ast}_{\bar\partial }(G/\Gamma).\]
Then the inclusion $dG^{\ast}(\g, J)\subset dG^{\ast}(G/\Gamma,J)$ induces a cohomology isomorphism.
\end{corollary}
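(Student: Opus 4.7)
The idea is to transfer the statement about the polyvector complex $dG^{\ast}$ to the corresponding statement for the Dolbeault complex of $(p,q)$-forms, which is available by hypothesis, using the triviality of the canonical bundle to identify the two.

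First, I would use Theorem \ref{Cano}: pick a nonzero $\eta\in\bigwedge^{n}\g_{1,0}^{\ast}$, viewed as a left-invariant $(n,0)$-form on $G$. Since $\partial\eta\in A^{n+1,0}=0$ automatically, the vanishing $d\eta=0$ forces $\bar\partial\eta=0$, so $\eta$ is holomorphic. Being left-invariant and nonzero at the identity it is nowhere-vanishing, so it descends to a nowhere-vanishing holomorphic section of $\bigwedge^{n}T^{\ast}_{1,0}G/\Gamma$; this trivializes the canonical bundle.

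Next I would use contraction with $\eta$ to set up an isomorphism of $\bar\partial$-complexes. Define
\[
\Phi\colon A^{0,q}\bigl(\textstyle\bigwedge^{p}T_{1,0}(G/\Gamma)\bigr)\longrightarrow A^{n-p,q}(G/\Gamma),\qquad \bar\alpha\otimes X\longmapsto \bar\alpha\wedge i_{X}\eta,
\]
and similarly at the Lie algebra level $\Phi_{\g}\colon \bigwedge^{q}\g_{0,1}^{\ast}\otimes\bigwedge^{p}\g_{1,0}\to \bigwedge^{q}\g_{0,1}^{\ast}\otimes\bigwedge^{n-p}\g_{1,0}^{\ast}$. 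Since $\eta$ is a nowhere-vanishing top holomorphic form, $\Phi$ is pointwise a linear isomorphism, and $\Phi_{\g}$ is a linear isomorphism. The main point to verify, and what I expect to be the only substantive check, is that $\Phi$ intertwines $\bar\partial$ on the polyvector side with $\bar\partial$ on the form side; this follows from $\bar\partial\eta=0$ together with the Leibniz rule, so that $\bar\partial(i_{X}\eta)=i_{\bar\partial X}\eta$ for a smooth section $X$ of $\bigwedge^{p}T_{1,0}$, and the extension to $(0,q)$-form coefficients is immediate.

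Finally, the inclusion $\bigwedge^{q}\g_{0,1}^{\ast}\otimes\bigwedge^{p}\g_{1,0}\hookrightarrow A^{0,q}(\bigwedge^{p}T_{1,0}(G/\Gamma))$ fits into the obvious commutative square with the inclusion $\bigwedge^{q}\g_{0,1}^{\ast}\otimes\bigwedge^{n-p}\g_{1,0}^{\ast}\hookrightarrow A^{n-p,q}(G/\Gamma)$ via $\Phi_{\g}$ and $\Phi$. Since the vertical maps $\Phi_{\g},\Phi$ are isomorphisms of complexes, the right inclusion induces an isomorphism on $\bar\partial$-cohomology precisely when the left one does. By hypothesis this holds bidegree-by-bidegree for the Dolbeault side, so summing over $p,q$ with $p+q$ constant yields that $dG^{\ast}(\g,J)\hookrightarrow dG^{\ast}(G/\Gamma,J)$ induces an isomorphism in cohomology, as desired. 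Note that only the chain-level (not the bracket or wedge) structure is needed, since the statement is about cohomology of the differential $\bar\partial$ alone.
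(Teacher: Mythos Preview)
Your proof is correct and follows essentially the same approach as the paper: both use Theorem~\ref{Cano} to trivialize the canonical bundle, then invoke the resulting isomorphism $\bigwedge^{p}T_{1,0}\cong\bigwedge^{n-p}T^{\ast}_{1,0}$ (your contraction $X\mapsto i_{X}\eta$ is exactly the pairing the paper writes) to reduce to the assumed Dolbeault isomorphism via a commutative square. You supply slightly more detail than the paper on why the contraction map commutes with $\bar\partial$, but the argument is the same.
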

\begin{proof}
Fix $p\in \N$.
We consider the natural $\C$-linear isomorphism $\textstyle\bigwedge ^{p} \g_{1,0} \cong \bigwedge^{n-p} \g^{\ast}_{1,0} $  given by the pairing
\[\textstyle\bigwedge ^{p} \g^{\ast}_{1,0} \times \bigwedge^{n-p} \g^{\ast}_{1,0}\to \bigwedge^{n} \g^{\ast}_{1,0}.\]
By theorem \ref{Cano}, this isomorphism induces an isomorphism
$\textstyle\bigwedge ^{p} T_{1,0}G/\Gamma \cong \bigwedge^{n-p} T^{\ast}_{1,0}G/\Gamma$
of holomorphic vector bundles, and hence induces an isomorphism 
\[(A^{0,\ast}(G/\Gamma,\textstyle\bigwedge ^{p} T_{1,0}G/\Gamma),\bar\partial)\cong (A^{n-p,\ast}(G/\Gamma),\bar\partial).\]
Now we have the commutative diagram
\[\xymatrix{
	\textstyle\bigwedge ^{n-p}\g_{1,0}^{\ast}\otimes \bigwedge^{\ast} \g_{0,1}\ar[r]\ar[d]^{\cong}& A^{n-p,\ast}(G/\Gamma) \ar[d]^{\cong} \\
\bigwedge^{p}\g_{1,0}\otimes \bigwedge ^{\ast}\g^{\ast}_{0,1} \ar[r]&A^{0,\ast}(G/\Gamma, \bigwedge^{p} T_{1,0}G/\Gamma). 
 }
\]
Since the inclusion $\textstyle\bigwedge ^{n-p}\g_{1,0}^{\ast}\otimes \bigwedge^{\ast} \g_{0,1}\subset A^{n-p,\ast}(G/\Gamma) $ induces a cohomology isomorphism, the inclusion  \[\textstyle\bigwedge ^{p}\g_{1,0}\otimes \bigwedge ^{\ast}\g^{\ast}_{0,1}\subset A^{0,\ast}(G/\Gamma, \bigwedge^{p} T_{1,0}G/\Gamma)\] does so.
Hence the corollary follows.
\end{proof}

\begin{theorem}\label{Nilcod}
Let $G$ be a simply connected nilpotent Lie group with a lattice $\Gamma$ and a left-invariant complex structure $J$.
Then
the inclusion $\textstyle\bigwedge ^{\ast}\g_{1,0}^{\ast} \otimes \bigwedge^{\ast}\g_{0,1}^{\ast}\subset A^{\ast,\ast}(G/\Gamma)$ induces an isomorphism 
\[H^{\ast,\ast}_{\bar\partial}(\g)\cong H^{\ast,\ast}_{\bar\partial }(G/\Gamma),\]
 if $(G,J,\Gamma)$ meet one of the following  conditions:

(N) The complex manifold $(G/\Gamma, J)$ has the structure of an iterated principal holomorphic torus bundle (\cite{CF}).

(Q)  $J$ is a rational complex structure i.e. for the rational structure $\g_{\Q}\subset \g$ of the Lie algebra $\g$ induced by a lattice $\Gamma$ (see \cite[Section 2]{R}) we have $J(\g_{\Q})\subset \g_{\Q}$ (\cite{CFI}).

 (C)   $(G,J)$ is a complex  Lie group (\cite{Sak}).\\
\end{theorem}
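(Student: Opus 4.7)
The statement collects three essentially independent results, each already available in the cited literature, so my plan is to treat conditions (N), (Q), and (C) one at a time and verify that the hypothesis in each case directly triggers the corresponding theorem. Nothing new is being proved beyond assembling the three references into one statement.

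For condition (N), an iterated principal holomorphic torus bundle is built inductively from principal holomorphic torus bundles. At each stage one has a Borel-type spectral sequence for Dolbeault cohomology, and left-invariant forms on the torus fiber compute its Dolbeault cohomology; combining this with the inductive hypothesis on the base shows that $\bigwedge^{\ast}\g_{1,0}^{\ast}\otimes\bigwedge^{\ast}\g_{0,1}^{\ast}$ computes $H^{\ast,\ast}_{\bar\partial}(G/\Gamma)$. This is exactly the content of the result of Console--Fino in \cite{CF}, so I would invoke it directly.

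For condition (C), a left-invariant complex structure on a complex Lie group is automatically bi-invariant, hence $G/\Gamma$ is complex parallelizable. Sakane's theorem in \cite{Sak} computes the Dolbeault cohomology of complex parallelizable nilmanifolds and shows that left-invariant forms suffice. For condition (Q), the complex structure $J$ is a small deformation of one satisfying $J(\g_{\Q})\subset \g_{\Q}$. Rational complex structures on nilmanifolds arise through Mostow bundle structures and satisfy the invariant-form isomorphism; the content of \cite{CFI} is that this isomorphism persists under small deformation of $J$, so I would cite \cite{CFI} directly.

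The main obstacle is really a bookkeeping one: making sure that the definitions used in \cite{CF}, \cite{CFI}, and \cite{Sak} match the setup stated in the theorem, and that each reference actually produces the Dolbeault isomorphism at the full bi-graded level $H^{p,q}_{\bar\partial}$ rather than only on the total complex. Once these compatibility checks are in place, the proof is simply three short citations, one per condition.
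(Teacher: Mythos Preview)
Your proposal is correct and matches the paper's approach: the theorem is stated without proof, as a compilation of three literature results, with the citations \cite{CF}, \cite{CFI}, \cite{Sak} embedded directly in conditions (N), (Q), (C). One small correction: \cite{CF} is Cordero--Fern\'andez--Gray--Ugarte, not Console--Fino (the latter is \cite{CFI}), so adjust your attribution for condition (N).
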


\begin{remark}
It is known that the isomorphism $H^{\ast,\ast}_{\bar\partial}(\g)\cong H^{\ast,\ast}_{\bar\partial }(G/\Gamma)$ holds on  an open set of any connected component of the moduli space  of left-invariant complex structures on a nilmanifold $G/\Gamma$.
(see \cite{CFI}).
\end{remark}

\section{Generalized deformations}
\subsection{DGA and generalized Kuranishi space}\label{GENK}
Let $(M,J) $ be a compact $n$-dimensional complex manifold.
We define the generalized complex structure ${\mathcal J}\in {\rm End}(TM\oplus T^{\ast}M)$ by ${\mathcal J}=\left(
\begin{array}{cc}
-J& 0  \\
0&    J^{\ast}
\end{array}
\right)$
with the maximally isotropic subspace $L_{\mathcal J}=T_{0,1}M\oplus T_{1,0}^{\ast}M$ in $(TM\oplus T^{\ast}M)\otimes \C$.

For $\epsilon \in C^{\infty}(\textstyle\bigwedge ^{2}  L^{\ast})=C^{\infty}(\bigwedge^{2} \bar L)=dG^{2}(M,J)$, we have the new generalized complex structure given by  the maximally isotropic subspace
\[L_{\epsilon}=(1+\epsilon )L_{\mathcal J}=\{E+i_{E}\epsilon\vert E\in L_{\mathcal J}\}\]
if $\epsilon$ satisfies the generalized Maurer-Cartan equation
\[\bar\partial \epsilon+\frac{1}{2}[\epsilon\bullet\epsilon]=0.\]

For a Hermitian metric on M, 
we consider the $\C$-anti-linear Hodge star operator \[\bar\ast_{g}\colon A^{0,q}(M,\textstyle\bigwedge  ^{p}T_{1,0}M)\\
\to A^{n,n-q}(M,\bigwedge ^{p}T_{1,0}^{\ast}M),\]
the adjoint differential operator $\bar\partial^{\ast}=-\bar\ast_{g}\bar\partial\bar\ast_{g}$,
the Laplacian operator $\Box_{g}=\bar\partial^{\ast}\bar\partial+\bar\partial\bar\partial^{\ast}$,
the space ${\mathcal H}_{g}^{\ast}(M,J)={\rm Ker} \, \Box_{g}{\vert_{dG^{\ast}(M, J)}}$ of harmonic forms,
the orthogonal projection $H\colon dG^{\ast}(M, J)\to {\mathcal H}_{g}^{\ast}(M,J)$ and the Green operator $G\colon dG^{\ast}(M, J)\to dG^{\ast}(M, J)$.
Take a basis $\eta_{1},\dots \eta_{k}$ of ${\mathcal H}_{g}^{\ast}(M,J)$.
For parameters $t=(t_{1},\dots ,t_{k})$,  we consider the formal power series $\phi(t)=\phi(t_{1},\dots, t_{k})$ with values in  $dG^{2}(G/\Gamma, J)$ given inductively by $\phi_{1}(t)=\sum_{i=1}^{k} t_{i}\eta_{i}$ and
\[\phi_{r}(t)=\frac{1}{2}\sum_{s=1}^{r-1}\bar\partial^{\ast}G[\phi_{s}(t)\bullet\phi_{r-s}(t)].\]
For sufficiently small $\delta>0$, for $\vert t\vert<\delta$, $\phi(t)$ converges.
We denote
\[\Kur^{gen}(M,J)=\left\{t=(t_{1},\dots ,t_{k})\vert \; \vert t\vert<\delta,\;  H\left(\left[\phi(t)\bullet\phi(t)\right]\right)=0\right\}.\]
Then we have:
\begin{theorem}{\rm (\cite{Gua})}\label{GDEF}
For $t\in \Kur^{gen}(M,J)$, $\phi(t)$ satisfies the generalized Maurer-Cartan equation.
Any sufficiently  small deformation of the generalized complex structure  $\mathcal J$ is equivalent to a generalized complex structure given by  the maximally isotropic subspace
\[L_{\phi(t)}=(1+\phi(t) )L_{\mathcal J}\]
for some $t \in \Kur^{gen}(M,J)$.
\end{theorem}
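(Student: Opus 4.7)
The plan is to adapt the classical Kuranishi construction to the DGA setting, following Gualtieri. The proof splits into three stages: convergence of the formal series $\phi(t)$, the equivalence of the obstruction $H([\phi(t)\bullet\phi(t)])=0$ with the generalized Maurer--Cartan equation, and a slice-type lemma ensuring that every small deformation is equivalent to some $L_{\phi(t)}$.

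For convergence I would fix a Sobolev norm $\|\cdot\|_s$ with $s$ large enough that multiplication $H^s\times H^s\to H^s$ is bounded, noting that $\bar\partial^{\ast}G$ is continuous on $H^s$ and that the Schouten bracket is a bounded bidifferential operator. A recursive estimate of the form $\|\phi_r(t)\|_s\le C\sum_{p+q=r}\|\phi_p(t)\|_s\|\phi_q(t)\|_s$ combined with a standard majorant-series argument then yields absolute convergence on a polydisk $|t|<\delta$.

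The algebraic heart of the proof is to show that $\phi(t)$ satisfies the generalized Maurer--Cartan equation if and only if $H([\phi(t)\bullet\phi(t)])=0$. Set $\Psi(t):=\bar\partial\phi(t)+\tfrac12[\phi(t)\bullet\phi(t)]$. From the recursive formula and $(\bar\partial^{\ast})^2=0$ one gets $\bar\partial^{\ast}\phi(t)=0$, and $\bar\partial\phi_1(t)=0$ since $\phi_1(t)$ is harmonic, so that $\bar\partial\phi(t)=\tfrac12\bar\partial\bar\partial^{\ast}G[\phi(t)\bullet\phi(t)]$. Combining the Hodge identity $I=H+\bar\partial\bar\partial^{\ast}G+\bar\partial^{\ast}\bar\partial G$ with the graded Jacobi identity $[[\phi\bullet\phi]\bullet\phi]=0$ (valid in any DGLA for elements of the relevant parity in the shifted grading), one derives $\bar\partial\Psi$ as a bracket expression linear in $\Psi$ and $\phi$, while $H\Psi=\tfrac12 H[\phi\bullet\phi]$. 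Under the assumption $H[\phi\bullet\phi]=0$, a Green-operator bootstrap using the smallness of $\phi(t)$ then forces $\Psi=0$.

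For the completeness clause, given any $\epsilon\in dG^2(M,J)$ satisfying $\bar\partial\epsilon+\tfrac12[\epsilon\bullet\epsilon]=0$ with $\|\epsilon\|$ small, I would apply the implicit function theorem to the map $(t,\xi)\mapsto \exp([\xi,\cdot])\cdot\phi(t)$, where $\xi$ ranges over a complement of $\ker\bar\partial$ inside $dG^1(M,J)$ and acts by infinitesimal automorphisms of the Courant bracket. The linearization at the origin, $(t,\xi)\mapsto\phi_1(t)+\bar\partial\xi$, surjects onto $\ker\bar\partial\cap dG^2(M,J)$, and every small MC element is $\bar\partial$-closed up to higher order, so the implicit function theorem produces a unique $t\in Kur^{gen}(M,J)$ and gauge parameter $\xi$ realizing $\epsilon$ as a gauge transform of $\phi(t)$. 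The main obstacle I anticipate is the careful bookkeeping of graded signs in the Schouten bracket together with verifying that the bootstrap in the middle stage indeed terminates at $\Psi=0$ rather than a nontrivial $\bar\partial^{\ast}$-closed residue; once the Sobolev framework is fixed, the remaining analytic ingredients are standard.
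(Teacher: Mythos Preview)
The paper does not prove this theorem; it is quoted from Gualtieri \cite{Gua} and used as a black box for the subsequent results. Your outline is therefore not being compared against anything in the present paper, but it does reconstruct, in broad strokes, the standard Kuranishi-type argument that Gualtieri adapts from the classical complex case: convergence by a majorant-series estimate, the equivalence of the harmonic obstruction $H[\phi\bullet\phi]=0$ with the Maurer--Cartan equation via the Hodge identity and graded Jacobi, and a slice argument for completeness.

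One point worth tightening in your completeness step: rather than applying the implicit function theorem to the gauge-action map $(t,\xi)\mapsto e^{[\xi,\cdot]}\cdot\phi(t)$ directly, the cleaner route (and the one closer to Kuranishi's original and to Gualtieri's adaptation) is to first show that any small $\epsilon$ can be gauged into the slice $\bar\partial^{\ast}\epsilon=0$, and then observe that within this slice the recursive formula for $\phi(t)$ is the \emph{unique} solution of the Maurer--Cartan equation with prescribed harmonic part $H\epsilon$. This sidesteps the sign bookkeeping you flag and makes the termination of the bootstrap transparent: once $\bar\partial^{\ast}\Psi=0$, $H\Psi=0$, and $\bar\partial\Psi$ is bounded by $\|\phi\|\cdot\|\Psi\|$, the Hodge decomposition forces $\Psi=0$ for $\|\phi\|$ small.
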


Suppose the canonical bundle $\textstyle\bigwedge ^{n}T^{\ast}_{1,0}M$ is trivial.
Since  we have an isomorphism $\textstyle\bigwedge  ^{p}T_{1,0}^{\ast}M\cong \bigwedge ^{n-p}T_{1,0}M$ by the pairing $\wedge\colon \textstyle\bigwedge  ^{p}T_{1,0}^{\ast}M\times \bigwedge ^{n-p}T^{\ast}_{1,0}M\to \bigwedge ^{n}T^{\ast}_{1,0}M$,  we can identifies $(A^{n,\ast}(M,\textstyle\bigwedge ^{p}T_{1,0}^{\ast}M),\bar\partial)$ with $(A^{0,\ast}(M,\bigwedge^{n-p}T_{1,0}M),\bar\partial)$.
Hence we can regard 
\[\bar\ast_{g}\colon A^{0,q}(M,\textstyle\bigwedge ^{p}T_{1,0}M)\to A^{0,n-q}(M,\bigwedge^{n-p}T_{1,0}M )\] and so 
\[\bar\ast_{g}\colon dG^{r}(M,J)\to  dG^{2n-r}(M,J).
\]
\begin{lemma}\label{RED}
Suppose the canonical bundle $\textstyle\bigwedge  ^{n}T^{\ast}_{1,0}M$ is trivial.
Let $C^{\ast}$ be a finite-dimensional subDGA of $dG(M,J)$ such that $\bar\ast (C^{\ast}) \subset C^{\ast}$.
Suppose the inclusion $C^{\ast}\subset dG^{\ast}(M,J)$ induces a cohomology isomorphism.
Then for $t\in \Kur^{gen}(M,J)$, we have 
\[\phi(t)\in  C^{2}.\]

\end{lemma}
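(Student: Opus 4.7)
The plan is to show by induction on $r$ that every Taylor coefficient $\phi_r(t)$ lies in $C^2$; the statement then follows because $C^2$ is finite dimensional, hence closed in $dG^2(M,J)$, and the convergent sum $\phi(t)=\sum_r\phi_r(t)$ must lie in it. For this I would first check that every operator appearing in the recursion preserves $C^*$. The bracket $[\bullet]$ and $\bar\partial$ preserve $C^*$ because it is a subDGA; the hypothesis $\bar\ast(C^*)\subset C^*$ combined with $\bar\partial^{\ast}=-\bar\ast\bar\partial\bar\ast$ gives $\bar\partial^{\ast}(C^*)\subset C^*$, and hence $\Box$ preserves $C^*$ too.

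The main step, and the only nontrivial one, is to show that the ambient harmonic space $\mathcal{H}_g^{\ast}(M,J)$ is actually contained in $C^*$. Here I would apply finite dimensional Hodge theory to the subcomplex $C^*$: because $C^*$ is finite dimensional and closed under $\bar\partial$ and its $L^2$-adjoint $\bar\partial^{\ast}$, elementary linear algebra gives the Hodge decomposition
\[C^{\ast}=\ker\Box|_{C^{\ast}}\oplus \bar\partial(C^{\ast})\oplus\bar\partial^{\ast}(C^{\ast}),\]
and an identification $H^{\ast}_{\bar\partial}(C^{\ast})\cong \ker\Box|_{C^{\ast}}=\mathcal{H}_g^{\ast}(M,J)\cap C^{\ast}$. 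The quasi-isomorphism hypothesis then forces
\[\dim\bigl(\mathcal{H}_g^{\ast}(M,J)\cap C^{\ast}\bigr)=\dim H^{\ast}_{\bar\partial}(dG^{\ast}(M,J))=\dim\mathcal{H}_g^{\ast}(M,J),\]
so $\mathcal{H}_g^{\ast}(M,J)\subset C^{\ast}$. In particular $\eta_1,\dots,\eta_k\in C^2$ and thus $\phi_1(t)\in C^2$.

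Given this, I would deduce that the Green operator $G$ also preserves $C^*$: it vanishes on $\mathcal{H}_g^{\ast}(M,J)\subset C^{\ast}$, while on the finite dimensional complement $(\mathcal{H}_g^{\ast})^{\perp_{L^2}}\cap C^{\ast}$ the self-adjoint operator $\Box$ is injective and therefore bijective, and $G$ is its inverse there. With this in hand the induction step is automatic, since each factor of $\bar\partial^{\ast}G[\,\cdot\bullet\,\cdot]$ in the recursion $\phi_r(t)=\tfrac{1}{2}\sum_{s=1}^{r-1}\bar\partial^{\ast}G[\phi_s(t)\bullet\phi_{r-s}(t)]$ keeps us inside $C^{\ast}$, and a degree check confirms that the output lies in $C^2$. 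The only real obstacle is the Hodge-theoretic dimension-count that places $\mathcal{H}_g^{\ast}(M,J)$ inside $C^{\ast}$; the rest is purely formal bookkeeping about invariant subspaces of operators.
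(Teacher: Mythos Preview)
Your proposal is correct and follows essentially the same line as the paper's own proof: both use the $\bar\ast$-stability of $C^{\ast}$ to get a finite-dimensional Hodge decomposition on $C^{\ast}$, deduce $\mathcal{H}_{g}^{\ast}(M,J)=\ker\Box|_{C^{\ast}}\subset C^{\ast}$ from the quasi-isomorphism hypothesis, and then observe that every operator in the Kuranishi recursion preserves $C^{\ast}$. Your write-up is in fact more explicit than the paper's, which asserts the last step (that $\phi(t)$ lands in $C^{2}$) without spelling out why the Green operator $G$ preserves $C^{\ast}$; your bijectivity argument for $\Box$ on the finite-dimensional complement $(\mathcal{H}_{g}^{\ast})^{\perp}\cap C^{\ast}$ fills that in cleanly.
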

\begin{proof}
By $\bar\ast_{g} (C^{\ast}) \subset C^{\ast}$, we can regard $\partial^{\ast}_{\vert_{C^{\ast}}}$ and $\Box_{g\vert_{C^{\ast}}}$ as operators on $C^{\ast}$.
Since  $C^{\ast}$ is finite dimensional, we can easily show the decomposition
\[C^{\ast}={\rm Ker}\, \Box_{g\vert_{C^{\ast}}}\oplus{\rm Im}\, \partial_{\vert_{C^{\ast}}}\oplus {\rm Im}\, \partial^{\ast}_{\vert_{C^{\ast}}}.
\]
Hence the induced map $H^{\ast}(C^{\ast})\to H^{\ast}(dG^{\ast}(M,J))$ is represented by the inclusion ${\rm Ker}\, \Box_{\vert_{C^{\ast}}}\subset {\mathcal H}^{\ast}_{g}(M,J)$.
Since  the induced map $H^{\ast}(C^{\ast})\to H^{\ast}(dG^{\ast}(M,J))$ is an isomorphism, we have ${\rm Ker}\, \Box_{\vert_{C^{\ast}}}={\mathcal H}^{\ast}_{g}(M,J)$.
For $t_{i} \eta_{i}\in {\mathcal H}^{2}_{g}(M,J)={\rm Ker}\, \Box_{\vert_{C^{\ast}}}$,
since $C^{\ast}$ is  a finite-dimensional sub-DGA of $dG^{\ast}(M,J)$, for a basis $\zeta_{1},\dots, \zeta_{l}$ of $C^{2}$, 
we have $\phi(t)=\sum \psi_{i}(t) \zeta_{i}$ which converges for $\vert t\vert <\delta$.
Hence the lemma follows.
\end{proof}
By this lemma we can prove the generalized version of Rollenske's result in \cite{RO}.

\begin{proposition}\label{NILDE}
Let $G$ be a simply connected $2n$-dimensional nilpotent Lie group with a left invariant complex structure $J$.
We suppose that the inclusion $\textstyle\bigwedge ^{\ast}\g_{1,0}^{\ast}\otimes \bigwedge^{\ast} \g^{\ast}_{0,1}\subset A^{\ast,\ast}(G/\Gamma)$ induces an isomorphism 
\[H^{\ast,\ast}_{\bar\partial}(\g)\cong H^{\ast,\ast}_{\bar\partial }(G/\Gamma).\]
Let $\mathcal J$ be the generalized complex structure given by $J$.
Then any sufficiently small deformation of $\mathcal J$ is equivalent to a generalized complex structure which is induced by a left-invariant generalized complex structure on $G$.
\end{proposition}
\begin{proof}
Take a basis $X_{1},\dots,X_{n}$ of $\g_{1,0}$.
We consider the left-invariant Hermitian metric $g=\sum x_{i}\bar x_{i} $ where $x_{1},\dots,x_{n}$ is the basis of $\g_{1,0}^{\ast}$.
Then we have $\bar\ast_{g}(dG^{\ast}(\g,J))\subset dG^{\ast}(\g,J)$.
For the basis 
\[X_{1},\dots,X_{n},\bar X_{1},\dots,\bar X_{n}, x_{1},\dots,x_{n},\bar x_{1},\dots,\bar x_{n}\]
 of $(\g\oplus\g^{\ast})\otimes\C$ we write
\[{\mathcal J}= \sqrt{-1}\sum ( X_{i}\otimes x_{i}+\bar x_{i}\otimes \bar X_{i})- \sqrt{-1}\sum ( \bar X_{i}\otimes \bar x_{i}+x_{i}\otimes  X_{i}).\]
By Theorem \ref{GDEF} and Lemma \ref{RED}, any sufficiently small deformation of $\mathcal J$ is equivalent to
\[{\mathcal J}_{\epsilon}= \sqrt{-1}\sum ( X_{i}^{\epsilon}\otimes x^{\epsilon}_{i}+(\bar x_{i})^{\epsilon}\otimes (\bar X_{i})^{\epsilon})- \sqrt{-1}\sum ( \bar{X}^{\bar\epsilon}_{i}\otimes \bar{x}^{\bar\epsilon}_{i}+x^{\bar\epsilon}_{i}\otimes  X^{\bar\epsilon}_{i})\]
where we write $E^{\epsilon}=E+i_{E}\epsilon$ for $E\in L_{\mathcal J}$.
Hence the theorem follows.
\end{proof}

\subsection{Smoothness}
We can prove   generalized version of Rollenske's result in \cite{ROP}.
\begin{proposition}\label{Cut}
Let $G$ be a $n$-dimensional simply connected complex nilpotent Lie group with a lattice $\Gamma$.
Suppose $G$ is $\nu$-step i.e. $C_{\nu-1}\g\not=0$ and $C_{\nu}\g=0$ where we denote by $\g=C_{0}\g\supset C_{1}\g\supset C_{2} \g\cdots$  the lower central series.
Then the formal power series $\phi(t)$ as in Section \ref{GENK} is a finite sum
\[\phi(t)=\sum_{i=1}^{\nu} \phi_{i}(t)
\]
and we have
\[H\left[\phi(t),\phi(t)\right]=\sum_{i+j\le \nu} H\left[\phi_{i}(t),\phi_{j}(t)\right].
\]
In particular  $\Kur^{gen}(G/\Gamma, J)$ is cut out by polynomial equations of degree 
at most $\nu$.
\end{proposition}
\begin{proof}
By remark \ref{repa}, we have
\begin{multline*}
\left[dG^{2}(\g,J)\bullet dG^{2}(\g,J)\right]\\
\subset \left[\textstyle\bigwedge ^{1}\g_{1,0} \otimes \bigwedge^{1}\g_{0,1}^{\ast}\bullet \bigwedge^{1}\g_{1,0} \otimes \bigwedge^{1}\g_{0,1}^{\ast}\right] 
\bigoplus  \left[\bigwedge^{2}\g_{1,0} \otimes \bigwedge^{0}\g_{0,1}^{\ast}\bullet \bigwedge^{1}\g_{1,0} \otimes \bigwedge^{1}\g_{0,1}^{\ast}\right]\\
\bigoplus \left[\bigwedge^{2}\g_{1,0} \otimes \bigwedge^{0}\g_{0,1}^{\ast}\bullet \bigwedge^{2}\g_{1,0} \otimes \bigwedge^{0}\g_{0,1}^{\ast}\right].
\end{multline*}
We have $\bar\partial^{\ast}(\textstyle\bigwedge ^{p}\g_{1,0})=0$ for any $p\in \N$.
Moreover since we have $\bar\partial_{\vert_{\textstyle\bigwedge ^{n-1} \g^{\ast}_{0,1}}}=0$ by the unimodularity of $G$, we have  $\bar\partial^{\ast}(\textstyle\bigwedge ^{p}\g_{1,0} \otimes \bigwedge^{1}\g_{0,1}^{\ast})=0$ for  any $p\in \N$.
Hence we have 
\[\bar\partial^{\ast} G \left[dG^{2}(\g,J)\bullet dG^{2}(\g,J)\right]\subset \left[\textstyle\bigwedge ^{1}\g_{1,0} \otimes \bigwedge^{1}\g_{0,1}^{\ast}\bullet \bigwedge^{1}\g_{1,0} \otimes \bigwedge^{1}\g_{0,1}^{\ast}\right].\]
By this, for  $i\ge 2$ inductively we have
\[\phi_{i}(t)\in C_{i-1}\g_{1,0}\otimes \textstyle\bigwedge ^{1}\g_{0,1}^{\ast}
\]
for  $i\ge 2$.
Hence we have $\phi_{\nu+1}(t)=0$ and $\left[\phi_{i}(t),\phi_{j}(t)\right]\in C_{i+j-1}\g_{1,0}\otimes \textstyle\bigwedge ^{1}\g_{0,1}^{\ast}$ for $i,j\ge 2$ and $\left[\phi_{\nu}(t),\phi_{1}(t)\right]=0$.
These imply the theorem.
\end{proof}

We consider the special condition  which implies that $\Kur^{gen}(M,J)$ is smooth.
In Section \ref{FORMA}, we give a large class of complex solvmanifolds satisfying such condition.
\begin{proposition}\label{Forsm}
Let $(M,J)$ be a compact complex manifold.
Suppose there exists a Hermitian metric $g$ on $M$ such that the space ${\mathcal H}_{g}^{\ast}(M,J)$ is closed under the Schouten bracket.
Take  a basis $\eta_{1},\dots \eta_{k}$ of ${\mathcal H}_{g}^{\ast}(M,J)$.
Then we have
\[\Kur^{gen}(M,J)=\left\{ t=(t_{1},\dots, t_{k})\vert \left[\sum_{i=1}^{k} t_{i}\eta_{i}\bullet \sum_{i=1}^{k} t_{i}\eta_{i}\right]=0\right\}.\]
In particular   $\Kur^{gen}(G/\Gamma, J)$ is cut out by polynomial equations of degree 
at most $2$.

Moreover, suppose that the Schouten bracket on ${\mathcal H}_{g}^{\ast}(M,J)$ is trivial.
Then  $\Kur^{gen}(G/\Gamma, J)$ is smooth.
\end{proposition}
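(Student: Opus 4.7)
The plan is to exploit the hypothesis that $\mathcal{H}_g^*(M,J)$ is closed under the Schouten bracket to show that the Kuranishi power series $\phi(t)$ collapses to its linear term $\phi_1(t)$. First I would note that $\phi_1(t)=\sum_{i=1}^{k}t_i\eta_i$ lies in the harmonic subspace $\mathcal{H}_g^2(M,J)$, so by the closure hypothesis $[\phi_1(t)\bullet\phi_1(t)]$ is again harmonic; since the Green operator $G$ vanishes on harmonic forms, the recursion immediately forces
\[
\phi_2(t)=\tfrac{1}{2}\bar\partial^{\ast} G[\phi_1(t)\bullet\phi_1(t)]=0.
\]

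Next I would extend this by induction on $r\ge 2$. Assuming $\phi_s(t)=0$ for every $2\le s<r$, each summand of the recursion
\[
\phi_r(t)=\tfrac{1}{2}\sum_{s=1}^{r-1}\bar\partial^{\ast}G[\phi_s(t)\bullet\phi_{r-s}(t)]
\]
satisfies either $s\ge 2$ or, when $s=1$, $r-s=r-1\ge 2$ (since $r\ge 3$), so at least one of the factors vanishes by the inductive hypothesis; the case $r=2$ has just been handled. Consequently $\phi(t)=\phi_1(t)=\sum_i t_i\eta_i$, which is automatically convergent since it is already a polynomial, and $[\phi(t)\bullet\phi(t)]$ is harmonic. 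Therefore $H$ acts as the identity on it, and the defining equation $H([\phi(t)\bullet\phi(t)])=0$ reduces to the quadratic system
\[
\Bigl[\,\sum_{i=1}^{k}t_i\eta_i\,\bullet\,\sum_{i=1}^{k}t_i\eta_i\,\Bigr]=0,
\]
which is the first assertion and gives the degree-two bound.

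For the smoothness statement, if the Schouten bracket is trivial on $\mathcal{H}_g^*(M,J)$, then the displayed quadratic equation is identically satisfied for every $t$, so $Kur^{gen}(M,J)$ coincides with the open polydisc $\{|t|<\delta\}$ in $\C^k$ and is manifestly smooth. The only real substance of the argument is the interaction $G\circ H=0$ coming from the Hodge decomposition for $\Box_g$, together with the closure of $\mathcal{H}_g^*(M,J)$ under the bracket; the main potential obstacle is simply being careful with the inductive bookkeeping so that no cross-term $[\phi_s\bullet\phi_{r-s}]$ slips through, and the case $r=2$ versus $r\ge 3$ split handles this cleanly.
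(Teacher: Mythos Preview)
Your proof is correct and follows essentially the same line as the paper's own argument: the key observation is that $[\phi_1(t)\bullet\phi_1(t)]\in\mathcal{H}_g^*(M,J)$ by the closure hypothesis, so $G$ kills it and $\phi_2(t)=0$, whence inductively $\phi(t)=\phi_1(t)$ and the defining condition $H([\phi(t)\bullet\phi(t)])=0$ becomes the quadratic equation displayed. The paper's proof is terser (it states $G[\phi_1\bullet\phi_1]=0$ and jumps to $\phi(t)=\phi_1(t)$ without spelling out the induction), but your explicit split into the base case $r=2$ and the inductive step $r\ge 3$ is exactly the content being used.
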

\begin{proof}
For any parameter $ t=(t_{1},\dots, t_{k})$ we have $\left[\sum_{i=1}^{k} t_{i}\eta_{i}\bullet\sum_{i=1}^{k} t_{i}\eta_{i}\right]\in {\mathcal H}_{g}^{\ast}(M,J)$.
Hence we have $G[\sum_{i=1}^{k} t_{i}\eta_{i}\bullet\sum_{i=1}^{k} t_{i}\eta_{i}]=0$ and so we have
$\phi(t)=\sum_{i=1}^{k} t_{i}\eta_{i}$.
This implies the first assertion of the proposition.
 Obviously, the second assertion follows from the first assertion.

\end{proof}

\begin{remark}
For a compact complex manifold, we consider the differential graded Lie algebra 
\[A^{0,\ast}(M,  T_{1,0}M)\]
of differential forms with values in the holomorphic tangent bundle and the space
\[{\mathcal H}_{g}^{\ast}(M,J)\cap A^{0,\ast}(M,  T_{1,0}M)\]
of harmonic forms which belong to such space.
We take a basis   a basis $\eta_{1},\dots,\eta_{j},\dots \eta_{k}$ of ${\mathcal H}_{g}^{2}(M,J)$ such that $\eta_{1},\dots,\eta_{j}$ is a basis of ${\mathcal H}_{g}^{2}(M,J)\cap A^{0,1}(M,  T_{1,0}M)$.
Then the subspace
\[\Kur(M,J)=\left\{t=(t_{1},\dots ,t_{j},0,\dots, 0)\vert \; \vert t\vert<\delta,\;  H([\phi(t)\bullet\phi(t)])=0\right\}\]
of $\Kur(M,J)^{gen}$ is the usual Kuranishi space (see \cite{Ku} and \cite{Gua}
).
Hence study of generalized deformation covers study of usual deformation of complex structures.
\end{remark}

\section{Cohomology of holomorphic Poisson manifolds}
Let $(M,J)$ be a compact complex manifold.
A bi-vector field $\mu\in C^{\infty}(\textstyle\bigwedge ^{2}T_{1,0}M)$ is called holomorphic Poisson if $\bar\partial \mu=0$ and $\left[\mu\bullet \mu\right]=0$.
Let $\mu$ be a holomorphic Poisson bi-vector field $\mu$ on $M$.
Then $T^{\ast}M$ is naturally a holomorphic Lie algebroid.
Since $\mu\in dG^{\ast}(M,J)$ satisfies the generalized Maurer-Cartan equation, we have the deformed generalized structure given by  the maximally isotropic subspace
$L_{\mu}$.
Considering the DBiA $\left(A^{0,\ast}(M,\textstyle\bigwedge ^{\ast}T_{1,0}M),\bar\partial\right)$, by the differential operator $[\mu\bullet ]\colon A^{0,\ast}(M,\textstyle\bigwedge ^{\ast}T_{1,0}M)\to A^{0,\ast}(M,\bigwedge^{\ast+1}T_{1,0}M)$, we have the double complex $(A^{0,\ast}(M,\textstyle\bigwedge ^{\ast}T_{1,0}M),\bar\partial,[\mu\bullet ])$.
Then we consider the following three cohomologies:

$\bullet$ The Lie algebroid cohomology of the holomorphic Lie algebroid $T^{\ast}M$.

$\bullet$ The Lie algebroid cohomology of the algebroid $L_{\mu}$ (Lie algeboid cohomology of a generalized complex manifold see \cite{Gua}).

$\bullet$ The total cohomology of the double complex 
\[(A^{0,\ast}(M,\textstyle\bigwedge ^{\ast}T_{1,0}M),\bar\partial,[\mu\bullet ]).\]
It is known that, these cohomologies are all isomorphic (see \cite{L-G}).
We use the notation $H^{\ast}(M,\mu)$ for any one of the above cohomology groups and call it the holomorphic Poisson cohomology of $(M, J, \mu)$.

\begin{lemma}\label{holpo}
Let $C^{\ast,\ast}$ be a sub-DBiA of the DBiA $(A^{0,\ast}(M,\textstyle\bigwedge ^{\ast}T_{1,0}M),\bar\partial)$.
We suppose that the inclusion  $C^{\ast,\ast}\subset A^{0,\ast}(M,\textstyle\bigwedge ^{\ast}T_{1,0}M)$ induces a $\bar\partial$-cohomology isomorphism and  for $C^{r}=\bigoplus_{p+q=r}C^{p,q}$, $C^{\ast}$ is a sub-DGA of $dG^{\ast}(M,J)$.
Let $\mu\in C^{2,0}$ be a holomorphic Poisson bi-vector field.
Then the inclusion $C^{\ast,\ast}\subset A^{0,\ast}(M,\textstyle\bigwedge ^{\ast}T_{1,0}M)$ induces an isomorphism between the total cohomology of $(C^{\ast,\ast},\bar\partial,[\mu\bullet ])$ and the total cohomology of  \[(A^{0,\ast}(M,\textstyle\bigwedge ^{\ast}T_{1,0}M),\bar\partial,[\mu\bullet ]).\]
Hence the total cohomology of $(C^{\ast,\ast},\bar\partial,[\mu\bullet ])$ is isomorphic to $H^{\ast}(M,\mu)$.
\end{lemma}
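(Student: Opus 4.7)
The plan is to run a standard spectral sequence comparison between the two filtered bicomplexes. First, I would check that the inclusion $\iota\colon C^{\ast,\ast}\hookrightarrow A^{0,\ast}(M,\bigwedge^{\ast}T_{1,0}M)$ is indeed a morphism of bicomplexes when both are equipped with $(\bar\partial,[\mu\bullet\;])$. Compatibility with $\bar\partial$ is automatic since $C^{\ast,\ast}$ is a sub-DBiA. For $[\mu\bullet\;]$, note that $\mu\in C^{2,0}$ has bidegree $(2,0)$, so on any $\alpha\in C^{p,q}\subset C^{p+q}$ we have $[\mu\bullet\alpha]\in C^{p+q+1}$ because $C^{\ast}$ is a sub-DGA of $dG^{\ast}(M,J)$; on the other hand, the Schouten bracket of $\mu$ with a section of $\bigwedge^{p}T_{1,0}M$ lies in $\bigwedge^{p+1}T_{1,0}M$ and does not touch the $\bar\partial$-form degree, so $[\mu\bullet\alpha]\in A^{0,q}(M,\bigwedge^{p+1}T_{1,0}M)\cap C^{p+q+1}=C^{p+1,q}$. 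Hence $[\mu\bullet\;]$ restricts to $C^{\ast,\ast}$ with bidegree $(+1,0)$ and $\iota$ commutes with it.

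Next, I would filter both bicomplexes by polyvector degree, setting $F^{k}=\bigoplus_{p\geq k}C^{p,q}$ on the subcomplex and analogously on the ambient bicomplex. Since $p$ and $q$ both range in $\{0,\dots,n\}$ with $n=\dim_{\C}M$, the filtration is bounded on each total degree, so both associated spectral sequences converge to the respective total cohomologies. Because $[\mu\bullet\;]$ strictly increases $p$, the associated graded with respect to this filtration retains only $\bar\partial$, and therefore
\[E_{1}^{p,q}(C)=H^{p,q}_{\bar\partial}(C^{\ast,\ast}),\qquad E_{1}^{p,q}(A)=H^{p,q}_{\bar\partial}\bigl(A^{0,\ast}(M,\bigwedge^{\ast}T_{1,0}M)\bigr),\]
with $d_{1}$ on both pages given by the class of $[\mu\bullet\;]$.

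By hypothesis, $\iota$ induces an isomorphism on $\bar\partial$-cohomology, so the induced map on $E_{1}$ pages is an isomorphism of bigraded vector spaces intertwining $d_{1}$. The Eilenberg--Moore comparison theorem for bounded filtrations then yields $E_{r}(C)\cong E_{r}(A)$ for all $r\geq 1$, and passing to $E_{\infty}$ gives an isomorphism on the associated gradeds of the total cohomologies; together with the boundedness of the filtration on each total degree, this forces $\iota$ to induce an isomorphism on the total cohomologies themselves. Combining this with the identification of the total cohomology of $(A^{0,\ast}(M,\bigwedge^{\ast}T_{1,0}M),\bar\partial,[\mu\bullet\;])$ with $H^{\ast}(M,\mu)$ recalled just before the lemma completes the proof.

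The argument is entirely formal once the compatibilities are in place; the only substantive check is the bidegree statement $[\mu\bullet C^{p,q}]\subset C^{p+1,q}$, which requires combining the sub-DBiA hypothesis (to control the $(0,q)$-form degree) with the sub-DGA hypothesis on the total complex (to keep us inside $C^{\ast,\ast}$). There is no real obstacle beyond this bookkeeping.
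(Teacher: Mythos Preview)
Your argument is correct and follows the same spectral sequence comparison as the paper, which simply sets up the two spectral sequences with $E_{1}\cong H_{\bar\partial}$ and invokes \cite[Theorem~3.5]{Mc}. Your additional verification that $[\mu\bullet C^{p,q}]\subset C^{p+1,q}$ and your remarks on bounded convergence make explicit what the paper's proof leaves tacit, but the strategy is identical.
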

\begin{proof}
For the double complexes $(C^{\ast,\ast},\bar\partial,[\mu\bullet ])$ and
 \[(A^{0,\ast}(M,\textstyle\bigwedge ^{\ast}T_{1,0}M),\bar\partial,[\mu\bullet ]),\]
 we have the spectral sequences $E^{\ast,\ast}_{\ast}(C^{\ast,\ast})$ and 
$E_{\ast}^{\ast,\ast}(A^{0,\ast}(M,\textstyle\bigwedge ^{\ast}T_{1,0}M))$
 such that $E^{\ast,\ast}_{1}(C^{\ast,\ast})\cong H_{\bar\partial}^{\ast,\ast}(C^{\ast,\ast})$ and  
\[E_{1}^{\ast,\ast}(A^{0,\ast}(M,\textstyle\bigwedge ^{\ast}T_{1,0}M))\cong H_{\bar\partial}^{\ast,\ast}(A^{0,\ast}(M,\textstyle\bigwedge ^{\ast}T_{1,0}M)) .\]
Since the inclusion  $C^{\ast,\ast}\subset A^{0,\ast}(M,\textstyle\bigwedge ^{\ast}T_{1,0}M)$ induces a $\bar\partial$-cohomology isomorphism, the inclusion $C^{\ast,\ast}\subset A^{0,\ast}(M,\textstyle\bigwedge ^{\ast}T_{1,0}M)$ induces an isomorphism $E^{\ast,\ast}_{1}(C^{\ast,\ast})\cong E_{1}^{\ast,\ast}(A^{0,\ast}(M,\textstyle\bigwedge ^{\ast}T_{1,0}M))$.
Hence by \cite[Theorem 3.5]{Mc}, the lemma follows.
\end{proof}
By this lemma we have:
\begin{corollary}\label{PONIL}
Let $G$ be a simply connected $2n$-dimensional nilpotent Lie group with a left invariant structure $J$.
We suppose that the inclusion $\textstyle\bigwedge ^{\ast}\g_{1,0}^{\ast} \otimes \bigwedge^{\ast}\g_{0,1}^{\ast}\subset A^{\ast,\ast}(G/\Gamma)$ induces an isomorphism 
\[H^{\ast,\ast}_{\bar\partial}(\g)\cong H^{\ast,\ast}_{\bar\partial }(G/\Gamma).\]
Let $\mu\in \textstyle\bigwedge ^{2}\g_{1,0} $ be a holomorphic Poisson bi-vector field.
Then the inclusion $\textstyle\bigwedge ^{\ast}\g_{1,0} \otimes \bigwedge^{\ast}\g_{0,1}^{\ast}\subset A^{0,\ast}(G/\Gamma,\bigwedge^{\ast}T_{1,0}G/\Gamma)$ induces an isomorphism between the total cohomology of $(\textstyle\bigwedge ^{\ast}\g_{1,0} \otimes \bigwedge^{\ast}\g_{0,1}^{\ast},\bar\partial,[\mu\bullet ])$ and the total cohomology of  $(A^{0,\ast}(G/\Gamma,\bigwedge^{\ast}T_{1,0}G/\Gamma),\bar\partial,[\mu\bullet ])$.
Hence the total cohomology of $(\textstyle\bigwedge ^{\ast}\g_{1,0} \otimes \bigwedge^{\ast}\g_{0,1}^{\ast},\bar\partial,[\mu\bullet ])$ is isomorphic to $H^{\ast}(G/\Gamma,\mu)$. 
\end{corollary}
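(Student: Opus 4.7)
The plan is to reduce the corollary to a direct application of Lemma \ref{holpo}, with $C^{p,q} := \bigwedge^{p}\g_{1,0} \otimes \bigwedge^{q}\g_{0,1}^{\ast}$. Three hypotheses of that lemma need to be checked: that $C^{\ast,\ast}$ is a sub-DBiA of $(A^{0,\ast}(G/\Gamma,\bigwedge^{\ast}T_{1,0}G/\Gamma),\bar\partial)$; that $C^{r} = \bigoplus_{p+q=r} C^{p,q}$ is a sub-DGA of $dG^{\ast}(G/\Gamma,J)$; and that the inclusion induces a $\bar\partial$-cohomology isomorphism. The holomorphic Poisson tensor $\mu\in\bigwedge^{2}\g_{1,0}$ lives in $C^{2,0}$ by assumption, so once the lemma applies it gives exactly the desired isomorphism of total cohomologies, and hence the identification with $H^{\ast}(G/\Gamma,\mu)$.

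First I would verify the algebraic hypotheses. That $C^{\ast,\ast}$ is closed under $\wedge$ is immediate from the definition, and closure under $\bar\partial$ follows because $\bar\partial$ preserves the subspace of left-invariant sections: on left-invariant inputs it is determined by the dual of the Lie bracket on $\g_{0,1}$, as set up in Section 3. The bigraded total space $C^{\ast}$ is, by construction in Section 3, the DGA $dG^{\ast}(\g,J)$, which is a sub-DGA of $dG^{\ast}(G/\Gamma,J)$ via the left-invariant inclusion noted there. These are essentially bookkeeping.

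The substantive input is the $\bar\partial$-cohomology isomorphism, which is not checked by hand but imported from Corollary \ref{nildoll}: its hypothesis is exactly the Dolbeault isomorphism $H^{\ast,\ast}_{\bar\partial}(\g)\cong H^{\ast,\ast}_{\bar\partial}(G/\Gamma)$ assumed in the statement, and its conclusion gives that for every $p$ the inclusion $\bigwedge^{p}\g_{1,0}\otimes \bigwedge^{\ast}\g^{\ast}_{0,1}\subset A^{0,\ast}(G/\Gamma,\bigwedge^{p}T_{1,0}G/\Gamma)$ is a $\bar\partial$-quasi-isomorphism. Summing over $p$ yields the bigraded $\bar\partial$-cohomology isomorphism required by Lemma \ref{holpo}.

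With all three hypotheses in place and $\mu\in C^{2,0}$, Lemma \ref{holpo} produces the isomorphism of total cohomologies for $(C^{\ast,\ast},\bar\partial,[\mu\bullet\,])$ and the last-mentioned identification with $H^{\ast}(G/\Gamma,\mu)$. There is no serious obstacle: the corollary is essentially the assembly of Corollary \ref{nildoll} (the Dolbeault half, whose proof used the triviality of the canonical bundle from Theorem \ref{Cano}) and Lemma \ref{holpo} (the spectral-sequence comparison). The only thing to be a little careful about is not to confuse the two grading conventions, namely that the first factor in $C^{p,q}$ is $\bigwedge^{p}\g_{1,0}$, not $\bigwedge^{p}\g_{1,0}^{\ast}$, so that $\mu$ lands in $C^{2,0}$ rather than in a space of forms.
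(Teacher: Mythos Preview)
Your proposal is correct and follows essentially the same approach as the paper: invoke the $\bar\partial$-cohomology isomorphism established in (the proof of) Corollary \ref{nildoll}, then apply Lemma \ref{holpo} with $C^{p,q}=\bigwedge^{p}\g_{1,0}\otimes\bigwedge^{q}\g_{0,1}^{\ast}$. The paper's proof is terser, leaving the DBiA and sub-DGA verifications implicit, but there is no substantive difference.
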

\begin{proof}
In the proof of Theorem \ref{nildoll}, we showed that the inclusion 
\[\textstyle\bigwedge ^{\ast}\g_{1,0} \otimes \bigwedge^{\ast}\g_{0,1}^{\ast}\subset A^{0,\ast}(G/\Gamma,\bigwedge^{\ast}T_{1,0}G/\Gamma)\]
 induces an $\bar\partial$-cohomology isomorphism.
Thus we can apply Lemma \ref{holpo}.
\end{proof}

\begin{example}
Consider the $3$-dimensional complex  Heisenberg group
 \[N=\left\{ \left(
\begin{array}{ccc}
1&  a&   c \\
0&     1& b\\
0& 0& 1 
\end{array}
\right)\colon a, b, c\in \C \right\}\]
and $G= N\times \C$.
Then $G$ admits a lattice $\Gamma$.
We take a basis $X,Y,Z,W$ of $\g_{1,0}$ such that $[X,Y]=Z$ and other brackets are $0$, and a basis $\bar x, \bar y, \bar z, \bar w$ of $\g^{\ast}_{0,1}$ such that $d\bar z=-\bar x\wedge \bar y$ and $d\bar x=d\bar y=d\bar w=0$.
We consider the following holomorphic Poisson bi-vector field
\[\mu=X\wedge Z+ Y\wedge W.\]
Then by Corollary \ref{PONIL}, $H^{\ast}(G/\Gamma,\mu)$ is isomorphic to the total cohomology of $(\textstyle\bigwedge ^{\ast}\g_{1,0} \otimes \bigwedge^{\ast}\g_{0,1}^{\ast},\bar\partial,[\mu\bullet ])$.

%\begin{remark}
$\mu$ is induced by a holomorphic symplectic form on $G/\Gamma$.
By the non-degeneracy of $\mu$, we can show that
the total cohomology of $(\textstyle\bigwedge ^{\ast}\g_{1,0} \otimes \bigwedge^{\ast}\g_{0,1}^{\ast},\bar\partial,[\mu\bullet ])$ is isomorphic to the total cohomology of  $(\textstyle\bigwedge ^{\ast}\g_{1,0}^{\ast} \otimes \bigwedge^{\ast}\g_{0,1}^{\ast},\bar\partial,\partial)$
and hence by Nomizu's theorem in \cite{Nom}, it is isomorphic to the de Rham cohomology $H^{\ast}(G/\Gamma,\C)$ of $G/\Gamma$.
The cohomology  $H^{\ast}(G/\Gamma,\mu)$ is not isomorphic to the Dolbeault cohomology $H^{\ast}(dG^{\ast}(G/\Gamma,J))$.
In Section \ref{FORMA}, we give examples of solvmanifolds with holomorphic symplectic forms such that the holomoprhic Poisson cohomologies are isomorphic to the Dolbeault cohomologies.
In Section \ref{ParL}, we give an example of a solvmanifold with a holomorphic Poisson bi-vector field such that the holomoprhic Poisson cohomology is neither the de Rham cohomology nor the Dolbeault cohomology.
%\end{remark}

\end{example}

\section{DGAs and generalized deformations of  solvmanifolds of splitting type}\label{sss}
\subsection{DGA}
In this section, we consider:
\begin{Assumption}\label{Ass}
We consider pairs $(G,\Gamma)$ where 
$G$ is the semi-direct product $\C^{n}\ltimes _{\phi}N$ with a left-invariant complex structure $J=J_{\C}\oplus J_{N}$ and the following conditions are satis􏰃ed.
\begin{enumerate}
\item $N$ is a simply connected nilpotent Lie group with a left-invariant complex structure $J_{N}$.\\
Let $\frak a$ and $\n$ be the Lie algebras of $\C^{n}$ and $N$ respectively.\\
\item For any $t\in \C^{n}$, $\phi(t)$ is a holomorphic automorphism of $(N,J_{N})$.\\
\item $\phi$ induces a semi-simple action on the Lie algebra $\n$ of $N$.\\
\item $G$ has a lattice $\Gamma$. (Then $\Gamma$ can be written by $\Gamma=\Gamma^{\prime}\ltimes_{\phi}\Gamma^{\prime\prime}$ such that $\Gamma^{\prime}$ and $\Gamma^{\prime\prime}$ are  lattices of $\C^{n}$ and $N$ respectively and for any $t\in \Gamma^{\prime}$ the action $\phi(t)$ preserves $\Gamma^{\prime\prime}$.) \\
\item The inclusion $\textstyle\bigwedge ^{\ast}\n_{1,0}^{\ast}\otimes \bigwedge^{\ast} \n_{0,1}^{\ast}\subset A^{\ast,\ast}(N/\Gamma^{\prime\prime})$ induces an isomorphism 
\[H^{\ast,\ast}_{\bar\partial}(\n)\cong H^{\ast,\ast}_{\bar\partial }(N/\Gamma^{\prime\prime}).\]
\end{enumerate}
\end{Assumption}

Consider the decomposition $\n\otimes {\C}=\n_{1,0}\oplus \n_{0,1}$.
By the condition (2), this decomposition is a direct sum of $\C^{n}$-modules.
By the condition (3) we have a basis $Y_{1},\dots ,Y_{m}$ of $\n^{1,0}$ such that the action $\phi$ on $\n_{1,0}$ is represented by
$\phi(t)={\rm diag} (\alpha_{1}(t),\dots, \alpha_{m} (t))$.
Since $Y_{j}$ is a left-invariant vector field on $N$,
the vector field $\alpha_{j}Y_{j}$ on $\C^{n}\ltimes _{\phi} N$ is  left-invariant.
Hence we have a basis $X_{1},\dots,X_{n}, \alpha_{1}Y_{1},\dots ,\alpha_{m}Y_{m}$ of $\g_{1,0}$.
Let $x_{1},\dots,x_{n}, \alpha^{-1}_{1}y_{1},\dots ,\alpha_{m}^{-1}y_{m}$ be the  basis of $\g^{\ast}_{1,0}$ which is dual to $X_{1},\dots,X_{n}, \alpha_{1}Y_{1},\dots ,\alpha_{m}Y_{m}$.
Then we have 
\begin{multline*}
\textstyle\bigwedge  ^{p}\g_{1,0}^{\ast}\otimes\bigwedge^{q}\g^{\ast}_{0,1}\\
=\bigwedge ^{p}\langle x_{1},\dots ,x_{n}, \alpha^{-1}_{1}y_{1},\dots ,\alpha^{-1}_{m}y_{m}\rangle\otimes \bigwedge ^{q}\langle \bar x_{1},\dots ,\bar x_{n}, \bar\alpha^{-1}_{1}\bar y_{1},\dots ,\bar\alpha^{-1}_{m}\bar y_{m}\rangle.
\end{multline*}

\begin{lemma}{\rm (\cite[Lemma 2.2]{Kd})}\label{charr}
Let  $\alpha\colon\C^{n}\to \C^{\ast}$ be a  $C^{\infty}$-character of $\C^{n}$.
There exists a unique unitary character $\beta$ such that $\alpha \beta^{-1}$ is holomorphic.
\end{lemma}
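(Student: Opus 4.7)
The plan is to reduce the statement to a linear algebra decomposition on the Lie algebra $\C^n$. First, I would use that $\C^n$ is simply connected abelian and that $\exp:(\C,+)\to (\C^{\ast},\cdot)$ is a covering homomorphism with discrete kernel $2\pi i \Z$ to lift $\alpha$ uniquely to a smooth group homomorphism $L:\C^n\to \C$ satisfying $\alpha = \exp\circ L$ and $L(0)=0$; continuity plus additivity then force $L$ to be $\R$-linear.

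Next, I would prove the purely linear claim that every $\R$-linear map $L:\C^n\to \C$ admits a unique decomposition $L = H + M$ with $H:\C^n\to \C$ complex-linear and $M:\C^n\to i\R$ $\R$-linear (purely imaginary valued). Existence is a real-dimension count $4n = 2n+2n$; for the directness of the sum, one checks that any $\C$-linear $H$ whose values lie in $i\R$ must vanish, using $H(iz)=iH(z)$ so that $H(iz)\in \R\cap i\R = \{0\}$, hence $H\equiv 0$.

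Exponentiating, set $\gamma := \exp\circ H$ and $\beta := \exp\circ M$. Then $\gamma$ is a holomorphic character (since $H$ is $\C$-linear, hence holomorphic), $\beta$ is a unitary character (since $M$ is $i\R$-valued, so $|\beta|\equiv 1$), and $\alpha\beta^{-1} = \gamma$ is holomorphic as required. For uniqueness, given two candidates $\beta_1,\beta_2$, the quotient $\beta_1^{-1}\beta_2$ is simultaneously holomorphic and unitary, so its lift to $\C$ is a $\C$-linear map taking values in $i\R$; by the vanishing observation above this lift is zero, hence $\beta_1=\beta_2$.

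The only mildly delicate point is the existence and uniqueness of the lift $L$, which is guaranteed by the simple connectedness of $\C^n$ together with the discreteness of $\ker(\exp)=2\pi i \Z$; everything else reduces to the elementary direct-sum decomposition above, so I do not expect any serious obstacle.
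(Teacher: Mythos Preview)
Your argument is correct. The lift of $\alpha$ through the covering $\exp:\C\to\C^{\ast}$ exists and is unique with $L(0)=0$ by simple connectedness of $\C^{n}$; additivity of $L$ follows from the connectedness argument you indicate, and continuity then gives $\R$-linearity. The direct-sum decomposition $\mathrm{Hom}_{\R}(\C^{n},\C)=\mathrm{Hom}_{\C}(\C^{n},\C)\oplus \mathrm{Hom}_{\R}(\C^{n},i\R)$ is verified exactly as you say, and exponentiating yields the desired $\beta$ and the holomorphic $\gamma=\alpha\beta^{-1}$. For uniqueness, your observation that a character which is simultaneously holomorphic and unitary has a lift that is $\C$-linear with purely imaginary values, hence zero, is the right point; one only needs to remark that a holomorphic character indeed has a $\C$-linear lift, which follows since $\exp$ is a local biholomorphism and an $\R$-linear holomorphic map is $\C$-linear.

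As for comparison: the present paper does not supply a proof of this lemma at all---it is quoted verbatim from \cite[Lemma~2.2]{Kd}. Your self-contained argument via the linear decomposition of $\mathrm{Hom}_{\R}(\C^{n},\C)$ is a clean and elementary way to establish the result, and nothing more sophisticated is needed.
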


By this lemma take the unique unitary characters $\beta_{i}$ and $\gamma_{i}$ on $\C^{n}$ such that $\alpha_{i}\beta_{i}^{-1}$ and $\bar\alpha\gamma^{-1}_{i}$ are holomorphic.

\begin{theorem}{\rm (\cite[Corollary 4.2]{Kd})}\label{CORR}
Let  $B^{\ast,\ast}_{\Gamma}\subset A^{\ast,\ast}(G/\Gamma)$ be the sub-DBiA of $A^{\ast,\ast}(G/\Gamma)$ given by
\[B^{p,q}_{\Gamma}=\left\langle x_{I}\wedge \alpha^{-1}_{J}\beta_{J}y_{J}\wedge \bar x_{K}\wedge \bar \alpha^{-1}_{L}\gamma_{L}\bar y_{L}{\Big \vert} \begin{array}{cc}\vert I\vert+\vert J\vert=p,\, \vert K\vert+\vert L\vert=q \\ (\beta_{J}\gamma_{L})_{\vert_{\Gamma}}=1\end{array}\right\rangle.
\]
Then  the inclusion $B^{\ast,\ast}_{\Gamma}\subset A^{\ast,\ast}(G/\Gamma)$ induces a cohomology isomorphism
\[H^{\ast,\ast}_{\bar \partial}(B^{\ast,\ast}_{\Gamma})\cong H^{\ast,\ast}_{\bar \partial}(G/\Gamma).
\]

\end{theorem}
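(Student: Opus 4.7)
The plan is to establish the cohomology isomorphism via a spectral-sequence analysis of the principal fibration
\[
N/\Gamma^{\prime\prime} \longrightarrow G/\Gamma \longrightarrow T = \C^{n}/\Gamma^{\prime}
\]
induced by the splitting $G = \C^{n}\ltimes_{\phi} N$, reducing the Dolbeault cohomology of $G/\Gamma$ to line-bundle cohomology on the complex torus $T$.

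First I would set up a Leray-type spectral sequence for Dolbeault cohomology attached to this fibration. Using Assumption~\ref{Ass}(5) together with the semi-simplicity of $\phi$ on $\n\otimes\C$, the $E_{1}$-page decomposes as
\[
E_{1}^{\ast,\ast}=\bigoplus_{J,K,L} A^{\ast,\ast}_{\bar\partial}(T,\mathcal{L}_{J,L})\otimes \bigwedge^{\ast}\langle \bar x_{K}\rangle,
\]
where $\mathcal{L}_{J,L}$ is the smooth line bundle on $T$ whose $\Gamma^{\prime}$-transition character is the $\phi$-weight $\alpha_{J}\bar\alpha_{L}^{-1}$ acting on the factor $y_{J}\wedge\bar y_{L}$. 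Lemma~\ref{charr} gives factorizations $\alpha_{i}=(\alpha_{i}\beta_{i}^{-1})\beta_{i}$ into holomorphic and unitary parts; the holomorphic factor only modifies the holomorphic frame of $\mathcal{L}_{J,L}$, so $H^{\ast,\ast}_{\bar\partial}(T,\mathcal{L}_{J,L})$ coincides with the Dolbeault cohomology of the flat unitary line bundle with character $\beta_{J}\gamma_{L}$. A classical computation on the complex torus then shows this cohomology vanishes unless $(\beta_{J}\gamma_{L})_{\vert_{\Gamma^{\prime}}}=1$, in which case it is spanned by the constant-coefficient monomials in $x_{i},\bar x_{i}$.

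Lifting the surviving harmonic representatives back to $G$ yields exactly the monomials $x_{I}\wedge\alpha_{J}^{-1}\beta_{J}y_{J}\wedge\bar x_{K}\wedge\bar\alpha_{L}^{-1}\gamma_{L}\bar y_{L}$ subject to $(\beta_{J}\gamma_{L})_{\vert_{\Gamma}}=1$, i.e.\ the generators of $B^{\ast,\ast}_{\Gamma}$. A direct computation shows $B^{\ast,\ast}_{\Gamma}$ is $\bar\partial$-stable and that each generator descends to $G/\Gamma$: the key points are that $\alpha_{i}\beta_{i}^{-1}$ is holomorphic (so $\bar\partial$ preserves the above generating monomials) and that $\beta_{J}\gamma_{L}$ is trivial on $\Gamma$ (so the forms descend). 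Combined, the inclusion $B^{\ast,\ast}_{\Gamma}\subset A^{\ast,\ast}(G/\Gamma)$ induces an isomorphism on $E_{1}$, forcing degeneration and the desired cohomology isomorphism. The main obstacle is upgrading the $E_{1}$-isomorphism to a genuine quasi-isomorphism of complexes rather than a mere identification of abutments; the natural device is a Fourier decomposition along $\Gamma^{\prime}$-characters on $T$ combined with a Green-operator chain homotopy, with all the analytic work concentrated on showing that the non-trivial-character isotypic components of $A^{\ast,\ast}(G/\Gamma)$ are chain-contractible, which holds precisely because of the unitary modification supplied by Lemma~\ref{charr}.
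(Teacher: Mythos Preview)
The paper does not supply a proof of this theorem: it is quoted verbatim as \cite[Corollary 4.2]{Kd} and used as a black box, so there is no argument in the present paper to compare your proposal against.

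That said, your sketch is a reasonable reconstruction of the proof in \cite{Kd}. The essential mechanism there is exactly the one you describe: one trivializes the holomorphic structure on each weight line bundle over the base torus $T=\C^{n}/\Gamma^{\prime}$ using the holomorphic factor $\alpha_{i}\beta_{i}^{-1}$ furnished by Lemma~\ref{charr}, reducing to Dolbeault cohomology of flat \emph{unitary} line bundles on $T$, which vanishes unless the character $\beta_{J}\gamma_{L}$ is trivial on $\Gamma^{\prime}$. Combined with Assumption~\ref{Ass}(5) for the fibre $N/\Gamma^{\prime\prime}$, this pins down the $E_{2}$-page and yields the quasi-isomorphism. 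One point to tighten: your $E_{1}$-display is not quite the right shape --- the $x_{I}$ and $\bar x_{K}$ directions should sit inside the Dolbeault complex of $T$ with coefficients, not as a separate tensor factor, and the spectral sequence is really filtered by fibre degree so that $E_{1}$ computes fibrewise $\bar\partial$-cohomology (this is where Assumption~\ref{Ass}(5) enters) and $E_{2}$ is the torus cohomology with local coefficients. The ``main obstacle'' you flag --- promoting an abutment identification to a genuine quasi-isomorphism --- is handled in \cite{Kd} precisely by the orthogonal projection/Green-operator argument you gesture at, so your instinct there is correct.
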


%We have, \[A^{0,\ast}(G/\Gamma, \bigwedge^{p} T^{1,0}G/\Gamma)=A^{0,\ast}(G/\Gamma)\otimes \bigwedge^{p}\g^{1,0}.\]

In this paper we consider the following assumption.
\begin{Assumption}\label{unim}
$\alpha_{[m]}=\alpha_{1}\cdot\alpha_{2}\cdots \alpha_{m}=1$ where we write $[m]=\{1,2,\dots, m\}$.
\end{Assumption}
In this assumption, by Theorem \ref{Cano}, we have $dy_{[m]}=0$
and hence the holomorphic  canonical bundle of $G/\Gamma$ is trivialized by the global holomorphic frame $x_{[n]}\wedge y_{[m]}$.
Then we have:
\begin{theorem}\label{DGAI}
Let $(G,\Gamma)$  be  as in Assumption \ref{Ass} with Assumption \ref{unim}.
We define the subspace 
\[C^{p,q}_{\Gamma}=\left\langle X_{I}\wedge \alpha_{J}\beta^{-1}_{J}Y_{J}\otimes \bar x_{K}\wedge \bar \alpha^{-1}_{L}\gamma_{L}\bar y_{L}{\Big \vert} \begin{array}{cc}\vert I\vert+\vert J\vert=p,\, \vert K\vert+\vert L\vert=q \\ (\beta^{-1}_{J}\gamma_{L})_{\vert_{\Gamma}}=1\end{array}\right\rangle
\]
of $A^{0,q}(G/\Gamma, \textstyle\bigwedge ^{p} T^{1,0}G/\Gamma)$.
We denote $C^{k}_{\Gamma}=\oplus_{p+q=k}C^{p,q}_{\Gamma}$.
Then  $(C^{\ast}_{\Gamma},\bar\partial )$ is a sub-DGA of $dG^{\ast}(G/\Gamma,J)$
and the  inclusion $C^{\ast}_{\Gamma}\subset dG^{\ast}(G/\Gamma,J)$ induces a cohomology isomorphism.
\end{theorem}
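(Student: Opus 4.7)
The plan is to adapt the proof of Corollary \ref{nildoll} to this setting, using Assumption \ref{unim} and Theorem \ref{CORR} in place of Theorem \ref{Cano}. First, Assumption \ref{unim} forces the unitary characters $\beta_{[m]}$ and $\gamma_{[m]}$ to be trivial: each of $\alpha_{[m]}\beta_{[m]}^{-1}=\beta_{[m]}^{-1}$ and $\bar\alpha_{[m]}\gamma_{[m]}^{-1}=\gamma_{[m]}^{-1}$ is simultaneously holomorphic and unitary on $\C^{n}$, hence constant. Combined with Theorem \ref{Cano} applied to the nilpotent factor $N$, this shows that $x_{[n]}\wedge y_{[m]}$ is a globally defined, $\bar\partial$-closed, nowhere-vanishing holomorphic section of the canonical bundle $\bigwedge^{n+m}T^{\ast}_{1,0}G/\Gamma$.

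Using this trivialization, the pairing $\omega\mapsto\iota_{\omega}(X_{[n]}\wedge Y_{[m]})$ produces a $\bar\partial$-equivariant isomorphism
\[(A^{n+m-p,\ast}(G/\Gamma),\bar\partial)\cong (A^{0,\ast}(G/\Gamma,\bigwedge^{p} T_{1,0}G/\Gamma),\bar\partial),\]
exactly as in Corollary \ref{nildoll}. Unwinding this isomorphism on generators sends $x_{I}\wedge \alpha^{-1}_{J}\beta_{J}y_{J}\wedge \bar x_{K}\wedge \bar\alpha^{-1}_{L}\gamma_{L}\bar y_{L}\in B^{n+m-p,q}_{\Gamma}$, up to sign, to $X_{I^{c}}\wedge \alpha_{J^{c}}\beta^{-1}_{J^{c}}Y_{J^{c}}\otimes \bar x_{K}\wedge \bar\alpha^{-1}_{L}\gamma_{L}\bar y_{L}$, where I use the identity $\beta_{J^{c}}^{-1}=\beta_{J}$ (a consequence of $\beta_{[m]}=1$). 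The defining condition $(\beta_{J}\gamma_{L})_{|\Gamma}=1$ of $B^{n+m-p,q}_{\Gamma}$ thus becomes precisely the defining condition $(\beta^{-1}_{J^{c}}\gamma_{L})_{|\Gamma}=1$ of $C^{p,q}_{\Gamma}$, so the pairing restricts to an isomorphism $B^{n+m-p,q}_{\Gamma}\cong C^{p,q}_{\Gamma}$. Combined with Theorem \ref{CORR}, this yields the required cohomology isomorphism induced by $C^{\ast}_{\Gamma}\subset dG^{\ast}(G/\Gamma,J)$.

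It remains to check that $C^{\ast}_{\Gamma}$ is a sub-DGA. Closure under $\wedge$ is immediate since the unitary condition $(\beta^{-1}_{J}\gamma_{L})_{|\Gamma}=1$ is multiplicative in the index data. For $\bar\partial$ and the Schouten bracket I would use the Leibniz rule to reduce to the generators $X_{i}$, $\alpha_{j}\beta^{-1}_{j}Y_{j}$, $\bar x_{k}$, $\bar\alpha^{-1}_{l}\gamma_{l}\bar y_{l}$. The key input is the semi-simplicity of $\phi$: the structure constants $c^{l}_{jk}$ of $\n_{1,0}$ vanish unless $\alpha_{l}=\alpha_{j}\alpha_{k}$, and by the uniqueness of the unitary part (Lemma \ref{charr}) this implies $\beta_{l}=\beta_{j}\beta_{k}$, so $[\alpha_{j}\beta^{-1}_{j}Y_{j},\alpha_{k}\beta^{-1}_{k}Y_{k}]$ stays in the span of the generators $\alpha_{l}\beta^{-1}_{l}Y_{l}$. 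The main obstacle is the analogous but more delicate bookkeeping for the mixed brackets involving $\bar x_{k}$ and $\bar\alpha^{-1}_{l}\gamma_{l}\bar y_{l}$: one must track how the anti-holomorphic differentials $d(\bar\alpha^{-1}_{l}\gamma_{l}\bar y_{l})$ interact with the characters $\alpha_{j}\beta^{-1}_{j}$, and verify via Lemma \ref{charr} that the resulting character combinations continue to trivialize on $\Gamma$.
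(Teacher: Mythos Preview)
Your argument for the cohomology isomorphism is exactly the paper's: trivialize the canonical bundle by $x_{[n]}\wedge y_{[m]}$, use the resulting identification $A^{n+m-p,\ast}\cong A^{0,\ast}(\bigwedge^{p}T_{1,0})$, observe that under this map $B^{n+m-p,\ast}_{\Gamma}$ goes to $C^{p,\ast}_{\Gamma}$ (via $\beta_{[m]}=\gamma_{[m]}=1$, so $\beta_{J^{c}}^{-1}=\beta_{J}$), and invoke Theorem~\ref{CORR}.

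For the sub-DGA property the paper's argument is the same as yours in content but packaged so that the ``delicate bookkeeping'' you flag disappears. Rather than working generator by generator, the paper takes the weight decomposition $\bigwedge(\n_{1,0}\oplus\n_{0,1}^{\ast})=\bigoplus V_{\epsilon_{i}}$ for the $\C^{n}$-action via $\phi$. Because $\phi(t)$ acts by DGA automorphisms, one has in one stroke
\[
V_{\epsilon_{i}}\wedge V_{\epsilon_{j}}\subset V_{\epsilon_{i}\epsilon_{j}},\qquad
[V_{\epsilon_{i}}\bullet V_{\epsilon_{j}}]\subset V_{\epsilon_{i}\epsilon_{j}},\qquad
\bar\partial(V_{\epsilon_{i}})\subset V_{\epsilon_{i}}.
\]
Your structure-constant observation ``$c^{l}_{jk}=0$ unless $\alpha_{l}=\alpha_{j}\alpha_{k}$'' is precisely the second inclusion restricted to $\n_{1,0}$, and the mixed terms $i_{Y}d\bar y$ and the $\bar\partial$-action on $\bar\alpha^{-1}_{l}\gamma_{l}\bar y_{l}$ are covered by the same inclusion applied to the full space $\n_{1,0}\oplus\n_{0,1}^{\ast}$. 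Writing $C^{\ast}_{\Gamma}=\bigoplus_{(\zeta_{i})_{|\Gamma}=1}\bigwedge\bigl((\C^{n})_{1,0}\oplus(\C^{n})_{0,1}^{\ast}\bigr)\otimes \epsilon_{i}\zeta_{i}^{-1}V_{\epsilon_{i}}$ and using that each $\epsilon_{i}\zeta_{i}^{-1}$ is holomorphic (so it is annihilated by $\bar\partial$ and transforms under $X_{i}$ by a constant multiple) then gives closure under all three operations uniformly, with no case analysis. What this buys over your route is exactly the elimination of the mixed-bracket verification you left open; conversely, your explicit generator computation has the virtue of making the role of Lemma~\ref{charr} (matching unitary parts $\beta_{l}=\beta_{j}\beta_{k}$ whenever $\alpha_{l}=\alpha_{j}\alpha_{k}$) completely transparent.
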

\begin{proof}

We  consider the weight decomposition
\[\textstyle\bigwedge  (\n_{1,0}\oplus \n_{0,1}^{\ast})=\bigoplus V_{\epsilon_{i}}
\]
of the $\C^{n}$-action via $\phi$ .
Since  $\phi(t)$ induces a semi-simple automorphism on the DGA $\textstyle\bigwedge  \left(\n_{1,0}\oplus \n_{0,1}^{\ast}\right)$ for any $t\in \C^{n}$, we have $V_{\epsilon_{i}}\wedge V_{\epsilon_{j}}\subset V_{ \epsilon_{i}\epsilon_{j}}$, $\left[V_{\epsilon_{i}}\bullet V_{\epsilon_{j}}\right]\subset V_{ \epsilon_{i}\epsilon_{j}}$ and $\bar\partial(V_{\epsilon_{i}})\subset V_{\epsilon_{i}}$.
Taking the unitary character $\zeta_{i}$ of $\C^{n}$ such that $\epsilon_{i}\zeta_{i}^{-1}$ is holomorphic as Lemma \ref{charr},  we have
\[C^{\ast}_{\Gamma}= \bigoplus_{(\zeta_{i})_{\vert_{\Gamma}}=1}\textstyle\bigwedge  ((\C^{n})_{1,0}\oplus (\C^{n})_{0,1}^{\ast})\otimes \epsilon_{i}\zeta_{i}^{-1} V_{\epsilon_{i}}.
\]
Hence $C^{\ast}_{\Gamma}$ is closed under wedge product.
Since $\epsilon_{i}\zeta_{i}^{-1} $ is holomorphic,
we have
\begin{multline*}
\left[\textstyle\bigwedge  ((\C^{n})_{1,0}\oplus (\C^{n})_{0,1}^{\ast})\otimes \epsilon_{i}\zeta_{i}^{-1} V_{\epsilon_{i}}\bullet \bigwedge ((\C^{n})_{1,0}\oplus (\C^{n})_{0,1}^{\ast})\otimes \epsilon_{j}\zeta_{j}^{-1} V_{\epsilon_{j}}\right]\\
\subset \bigwedge ((\C^{n})_{1,0}\oplus (\C^{n})_{0,1}^{\ast})\otimes \epsilon_{i}\epsilon_{j}\zeta_{i}^{-1}\zeta_{j}^{-1} V_{\epsilon_{i}\epsilon_{j}}
\end{multline*}
and 
\begin{multline*}\bar\partial \left(\textstyle\bigwedge  ((\C^{n})_{1,0}\oplus (\C^{n})_{0,1}^{\ast})\otimes \epsilon_{i}\zeta_{i}^{-1} V_{\epsilon_{i}}\right)\\
=\bigwedge ((\C^{n})_{1,0}\oplus (\C^{n})_{0,1}^{\ast})\otimes \epsilon_{i}\zeta_{i}^{-1} \bar\partial V_{\epsilon_{i}}\subset \bigwedge ((\C^{n})_{1,0}\oplus (\C^{n})_{0,1}^{\ast})\otimes \epsilon_{i}\zeta_{i}^{-1} V_{\epsilon_{i}}.
\end{multline*}
Hence $C^{\ast}_{\Gamma}$ is a subDGA of $dG^{\ast}(G/\Gamma,J)$.

We will show that $C^{\ast}_{\Gamma}\subset dG^{\ast}(G/\Gamma,J)$ induces a cohomology isomorphism.
Since the holomorphic  canonical bundle of $G/\Gamma$ is trivialized by the global holomorphic frame $x_{[n]}\wedge y_{[m]}$,
 we have the isomorphism
\[\textstyle\bigwedge  ^{p} T_{1,0}G/\Gamma \cong \bigwedge ^{n+m-p} T_{1,0}^{\ast}G/\Gamma
\]
which is given by 
\[\textstyle\bigwedge  ^{p}\g_{1,0}\ni X_{I}\wedge\alpha_{J}Y_{J}\mapsto 
x_{I}\wedge \alpha^{-1}_{[m]-J}y_{J}\in \bigwedge ^{n+m-p}\g_{1,0}^{\ast}.\]
By this isomorphism we have the commutative diagram
\[\xymatrix{
	B^{n+m-p,\ast}_{\Gamma}\ar[r]\ar[d]^{\cong}& A^{n+m-p,\ast}(G/\Gamma) \ar[d]^{\cong} \\
C^{p,\ast}_{\Gamma} \ar[r]&A^{0,\ast}(G/\Gamma, \bigwedge^{p} T_{1,0}G/\Gamma). 
 }
\]
Hence by Theorem \ref{CORR}, $C^{p,\ast}_{\Gamma}$ is a subcomplex of $A^{0,\ast}(G/\Gamma, \textstyle\bigwedge ^{p} T_{1,0}G/\Gamma)$ and the inclusion $C^{p,\ast}_{\Gamma} \subset A^{0,\ast}(G/\Gamma,\textstyle\bigwedge ^{p} T_{1,0}G/\Gamma)$ induces a cohomology isomorphism for each $p$ and this implies  the theorem.
\end{proof}

\subsection{Holomorphic Poisson cohomology}
We consider the cohomology of holomorphic Poisson solvmanifolds.
\begin{corollary}\label{posp}
Let $(G,\Gamma)$  be  as in Assumption \ref{Ass} with Assumption \ref{unim}.
We consider the DBiA $(C^{\ast,\ast}_{\Gamma},\bar\partial)$ as Theorem \ref{DGAI}.
Let $\mu \in C^{2.0}_{\Gamma}$ be a holomorphic Poisson bi-vector field.
Then the inclusion 
\[C^{\ast,\ast}_{\Gamma}\subset A^{0,\ast}(G/\Gamma, \textstyle\bigwedge ^{\ast} T_{1,0}G/\Gamma)\] induces an isomorphism between the total cohomology of the double complex $(C^{\ast,\ast}_{\Gamma},\bar\partial, [\mu\bullet])$ and the total cohomology of the double complex $(A^{0,\ast}(G/\Gamma, \textstyle\bigwedge ^{\ast} T_{1,0}G/\Gamma),\bar\partial,  [\mu\bullet])$.
Hence total cohomology of the double complex $(C^{\ast,\ast}_{\Gamma},\bar\partial, [\mu\bullet])$ is isomorphic to $H(G/\Gamma, \mu)$.
\end{corollary}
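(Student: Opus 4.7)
The plan is to apply Lemma~\ref{holpo} directly, with $C^{\ast,\ast}=C^{\ast,\ast}_{\Gamma}$. For this I need three things: (a) $C^{\ast,\ast}_{\Gamma}$ is a sub-DBiA of $(A^{0,\ast}(G/\Gamma,\bigwedge^{\ast}T_{1,0}G/\Gamma),\bar\partial)$, (b) its totalization $C^{\ast}_{\Gamma}=\bigoplus_{p+q=r}C^{p,q}_{\Gamma}$ is a sub-DGA of $dG^{\ast}(G/\Gamma,J)$ (so that $[\mu\bullet{\cdot}]$ preserves $C^{\ast,\ast}_{\Gamma}$), and (c) the inclusion induces a bi-graded $\bar\partial$-cohomology isomorphism. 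Conveniently, all three are already packaged into Theorem~\ref{DGAI} and its proof; the task is just to repackage that content in the form that Lemma~\ref{holpo} asks for.

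Item (b) is exactly the statement of Theorem~\ref{DGAI}, so $C^{\ast}_{\Gamma}$ is closed under wedge, $\bar\partial$, and the Schouten bracket; in particular, since $\mu\in C^{2,0}_{\Gamma}$ is of pure bi-degree, the operator $[\mu\bullet{\cdot}]$ has bi-degree $(1,0)$ on $C^{\ast,\ast}_{\Gamma}$ and thus genuinely turns $(C^{\ast,\ast}_{\Gamma},\bar\partial,[\mu\bullet{\cdot}])$ into a double subcomplex. For (a), the wedge-product closure is immediate from the weight decomposition in the proof of Theorem~\ref{DGAI}: generators of $C^{p,q}_{\Gamma}$ and $C^{p',q'}_{\Gamma}$ wedge into $C^{p+p',q+q'}_{\Gamma}$ because the unitarity condition $(\beta^{-1}_{J}\gamma_{L})_{|_{\Gamma}}=1$ is multiplicative and the bi-degree is preserved by construction; the same weight-decomposition argument shows that $\bar\partial$ preserves each $C^{p,q}_{\Gamma}$ summand, not merely the total grading.

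For (c), the final commutative square in the proof of Theorem~\ref{DGAI}, set up via the canonical-bundle trivialization $x_{[n]}\wedge y_{[m]}$ made available by Assumption~\ref{unim}, identifies $C^{p,\ast}_{\Gamma}\subset A^{0,\ast}(G/\Gamma,\bigwedge^{p}T_{1,0}G/\Gamma)$ with $B^{n+m-p,\ast}_{\Gamma}\subset A^{n+m-p,\ast}(G/\Gamma)$ as $\bar\partial$-complexes; Theorem~\ref{CORR} then yields the $\bar\partial$-quasi-isomorphism for each fixed $p$, which is the bi-graded hypothesis of Lemma~\ref{holpo}. With (a)--(c) in place, Lemma~\ref{holpo} delivers the corollary. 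I do not expect any substantive obstacle here: the content is all present in the proof of Theorem~\ref{DGAI}, and the only small bookkeeping point worth being explicit about is that the bi-degree (not just the total degree) is preserved, which is why I would take the trouble to verify (a) directly rather than quoting the totalized version alone.
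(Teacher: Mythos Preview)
Your proposal is correct and follows exactly the paper's approach: invoke Lemma~\ref{holpo} after observing that the proof of Theorem~\ref{DGAI} already establishes the bi-graded $\bar\partial$-quasi-isomorphism and the sub-DGA property. The paper's own proof is just the two-line version of what you wrote, citing the proof of Theorem~\ref{DGAI} for the $\bar\partial$-cohomology isomorphism and then applying Lemma~\ref{holpo}.
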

\begin{proof}
In the proof of Theorem \ref{DGAI}, we showed that the inclusion 
\[C^{\ast,\ast}_{\Gamma}\subset A^{0,\ast}(G/\Gamma, \textstyle\bigwedge ^{\ast} T_{1,0}G/\Gamma)\]
 induces a $\bar\partial$-cohomology isomorphism.
Thus we can apply Lemma \ref{holpo}.
\end{proof}
\subsection{Generalized deformation}\label{spdeff}
Let $(G,\Gamma)$  be  as in Assumption \ref{Ass} with Assumption \ref{unim}.
We consider the left-invariant Hermitian metric 
\[g=\sum x_{i} \bar x_{i}+ \sum \alpha_{i}^{-1}\bar\alpha^{-1}_{i}y_{i}\bar y_{i}.
\]
Then the $\C$-anti-linear Hodge star operator \[\bar\ast_{g}\colon A^{0,q}(G/\Gamma,\textstyle\bigwedge  ^{p}T_{1,0}G/\Gamma)\to A^{0,n+m-q}(G/\Gamma,\bigwedge ^{n+m-p}T_{1,0}G/\Gamma)
\]
is given by 
\[
\bar \ast_{g} (X_{I}\wedge \alpha_{J}Y_{J}\otimes \bar x_{K}\wedge \bar \alpha^{-1}_{L}\bar y_{L})
=X_{[n]-I}\wedge\alpha_{[m]-J}Y_{[m]-J}\otimes \bar x_{[n]-K}\wedge \bar \alpha^{-1}_{[m]-L}\bar y_{[m]-L}.
\]
Since we have $\alpha_{[m]-J}=\alpha^{-1}_{J}$ and $\beta_{[m]-J}=\beta^{-1}_{J}$, we have
\begin{multline*}\bar \ast_{g} ( X_{I}\wedge \alpha_{J}\beta^{-1}_{J}Y_{J}\otimes \bar x_{K}\wedge \bar \alpha^{-1}_{L}\gamma_{L}\bar y_{L})\\
=X_{[n]-I}\wedge\alpha_{[m]-J}\beta_{J}Y_{[m]-J}\otimes \bar x_{[n]-K}\wedge \bar \alpha^{-1}_{[m]-L}\gamma_{L}^{-1}\bar y_{[m]-L}\\
=X_{[n]-I}\wedge\alpha_{[m]-J}\beta_{[m]-J}^{-1}Y_{[m]-J}\otimes \bar x_{[n]-K}\wedge \bar \alpha^{-1}_{[m]-L}\gamma_{[m]-L}\bar y_{[m]-L} \in C^{\ast}_{\Gamma}.
\end{multline*}
Hence we have $\bar\ast_{g}(C^{\ast})\subset C^{\ast}$.
For the above $X_{1},\dots X_{n},Y_{1},\dots ,Y_{m}$, $x_{1},\dots x_{n}, y_{1},\dots,y_{m}$, as an endomorphism of  the complexified  tangent bundle,
 the complex structure $J$  can be written as
\[J=\sqrt{-1}\sum(X_{i}\otimes x_{i}+Y_{i}\otimes y_{i})-\sqrt{-1}\sum(\bar X_{i}\otimes \bar x_{i}+\bar Y_{i}\otimes \bar y_{i}).\]
and this gives the generalized complex structure 
\begin{multline*}
{\mathcal J}=\sqrt{-1}\sum (X_{i}\otimes x_{i}+Y_{i}\otimes y_{i}+\bar x_{i}\otimes \bar X_{i}+\bar y_{i}\otimes \bar Y_{i})\\
-\sqrt{-1}\sum(\bar X_{i}\otimes\bar x_{i}+\bar Y_{i}\otimes\bar y_{i}+ x_{i}\otimes  X_{i}+ y_{i}\otimes  Y_{i}).
\end{multline*}
By Theorem \ref{DGAI}, Theorem \ref{GDEF} and Lemma \ref{RED}, we have:
\begin{corollary}\label{DEFCST11}
Any sufficiently small deformation of the generalized complex structure $\mathcal J$ which is given by $J$ is equivalent to
\begin{multline*}{\mathcal J}_{\epsilon}
=\sqrt{-1}\sum (X^{\epsilon}_{i}\otimes x^{\epsilon}_{i}+Y^{\epsilon}_{i}\otimes y^{\epsilon}_{i}+(\bar x_{i})^{\epsilon}\otimes (\bar X_{i})^{\epsilon}+(\bar y_{i})^{\epsilon}\otimes (\bar Y_{i})^{\epsilon})\\
-\sqrt{-1}\sum((\bar X_{i})^{\bar\epsilon}\otimes(\bar x_{i})^{\bar\epsilon}+(\bar Y_{i})^{\bar\epsilon}\otimes(\bar y_{i})^{\bar\epsilon}+ x_{i}^{\bar\epsilon}\otimes  X_{i}^{\bar\epsilon}+ y_{i}^{\bar\epsilon}\otimes  Y_{i}^{\bar\epsilon}).
\end{multline*}
for some $\epsilon \in C^{2}_{\Gamma}$ where we write $E^{\epsilon}=E+i_{E}\epsilon$ for $E\in L_{\mathcal J}$.
\end{corollary}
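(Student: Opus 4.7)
The plan is to read off the corollary as a direct assembly of Theorem \ref{DGAI}, Lemma \ref{RED}, and Theorem \ref{GDEF}, once the hypotheses of Lemma \ref{RED} are checked for the specific subDGA $C^{\ast}_{\Gamma}$.

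First, I would verify that Lemma \ref{RED} applies to $C^{\ast}_{\Gamma}\subset dG^{\ast}(G/\Gamma,J)$. The triviality of the canonical bundle follows from Assumption \ref{unim} together with Theorem \ref{Cano} applied to $N$: indeed, $\alpha_{[m]}=1$ forces $dy_{[m]}=0$, so $x_{[n]}\wedge y_{[m]}$ is a global non-vanishing holomorphic section of $\bigwedge^{n+m}T^{\ast}_{1,0}G/\Gamma$. The key structural hypothesis $\bar\ast_{g}(C^{\ast}_{\Gamma})\subset C^{\ast}_{\Gamma}$ is exactly the explicit computation performed in the paragraph immediately preceding the corollary, using that $\alpha_{[m]-J}=\alpha_{J}^{-1}$, $\beta_{[m]-J}=\beta_{J}^{-1}$ and $\gamma_{[m]-L}=\gamma_{L}^{-1}$, which together with the condition $(\beta_{J}^{-1}\gamma_{L})_{\vert_{\Gamma}}=1$ is preserved under the index complementation $(I,J,K,L)\mapsto ([n]-I,[m]-J,[n]-K,[m]-L)$. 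Finally, Theorem \ref{DGAI} already gives that $C^{\ast}_{\Gamma}$ is a finite-dimensional sub-DGA whose inclusion is a cohomology isomorphism.

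Having checked all the hypotheses, Lemma \ref{RED} yields $\phi(t)\in C^{2}_{\Gamma}$ for every $t\in Kur^{gen}(G/\Gamma,J)$. Combined with Theorem \ref{GDEF}, any small deformation of $\mathcal J$ is equivalent to the one associated with the maximally isotropic subspace
\[
L_{\phi(t)}=(1+\phi(t))L_{\mathcal J}=\{E+i_{E}\phi(t)\mid E\in L_{\mathcal J}\}
\]
for some such $\phi(t)\in C^{2}_{\Gamma}$. Setting $\epsilon=\phi(t)\in C^{2}_{\Gamma}$ and writing $E^{\epsilon}=E+i_{E}\epsilon$ for $E\in L_{\mathcal J}=T_{0,1}(G/\Gamma)\oplus T^{\ast}_{1,0}(G/\Gamma)$, the deformed generalized complex structure is the one with $\pm\sqrt{-1}$-eigenspaces $L_{\phi(t)}$ and its conjugate $L_{\bar\phi(t)}$. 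Reading off its endomorphism expression in the basis $X_{i},Y_{i},\bar X_{i},\bar Y_{i},x_{i},y_{i},\bar x_{i},\bar y_{i}$ — obtained from the original formula for $\mathcal J$ by replacing each element $E\in L_{\mathcal J}$ by $E^{\epsilon}$ and each element $\bar E\in\bar L_{\mathcal J}$ by $\bar E^{\bar\epsilon}$ — gives precisely the displayed expression in the statement.

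I expect no serious obstacle: the only non-formal input is verifying $\bar\ast_{g}$-closure of $C^{\ast}_{\Gamma}$, and this is already essentially done. The remaining work is a translation between the abstract isotropic-subspace description of a generalized complex structure and its matrix form on $(T\oplus T^{\ast})\otimes\C$, which is bookkeeping once one observes that the transformation $E\mapsto E^{\epsilon}$ is the identity on $L_{\mathcal J}$ to first-order-in-$\epsilon$ components and simply shifts each basis vector by the contraction with $\epsilon\in C^{2}_{\Gamma}\subset\bigwedge^{2}\bar L_{\mathcal J}$.
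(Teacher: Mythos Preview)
Your proposal is correct and follows essentially the same approach as the paper: the corollary is stated there as an immediate consequence of Theorem \ref{DGAI}, Theorem \ref{GDEF} and Lemma \ref{RED}, with the verification that $\bar\ast_{g}(C^{\ast}_{\Gamma})\subset C^{\ast}_{\Gamma}$ carried out in the paragraph just before the statement. Your write-up simply makes explicit the assembly of these ingredients and the translation of $L_{\phi(t)}$ into the displayed endomorphism formula.
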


We consider the following condition:
\\
\ 

(${\mathcal D}_{r}$) For $r\in \N$, for any $J,L\subset [m]$ with $\vert J\vert+\vert L\vert\le r$,  $(\beta^{-1}_{J}\gamma_{L})_{\vert_{\Gamma}}=1$ if and only if $\alpha_{J}\bar\alpha^{-1}_{L}=1$.
\\
\ \\
If the condition ${\mathcal D}_{r}$ holds, then for $p,q\in N$ with $p+q\le r$ we have
\[C^{p,q}_{\Gamma}=\left\langle X_{I}\wedge Y_{J}\otimes \bar x_{K}\wedge \bar y_{L} \vert \alpha_{J}\alpha^{-1}_{L}=1  \right\rangle.\]
Hence we have $C^{\ast}_{\Gamma}\subset dG^{\ast}(\g,J)$.
This implies that $\phi(t)\in \Kur^{gen}(G/\Gamma)$  gives a left-invariant generalized complex structure on $G/\Gamma$.
We have:
\begin{theorem}\label{lefiviv}
Let $(G,\Gamma)$  be  as in Assumption \ref{Ass} with Assumption \ref{unim}.
Suppose that the condition (${\mathcal D}_{2}$) holds.
Then  any sufficiently small deformation of the generalized complex structure $\mathcal J$ which is given by $J$ is  equivalent to a  generalized complex structure on $G/\Gamma$ which is induced by a left-invariant generalized complex structure on $G$.
\end{theorem}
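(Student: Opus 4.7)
The plan is to combine Corollary \ref{DEFCST11} with the observation (already implicit in the paragraph preceding the theorem) that $(\mathcal D_2)$ forces the degree-two piece of $C_\Gamma^{\ast}$ to lie in the left-invariant subDGA $dG^2(\g,J)$. First I would invoke Corollary \ref{DEFCST11} to write any small deformation of $\mathcal J$ as $L_{\phi(t)}=(1+\phi(t))L_{\mathcal J}$ with $\phi(t)\in C^2_\Gamma$. The main task is then to check that every generator
\[X_I\wedge \alpha_J\beta_J^{-1}Y_J \otimes \bar x_K\wedge \bar\alpha_L^{-1}\gamma_L \bar y_L,\qquad |I|+|J|+|K|+|L|=2,\quad (\beta_J^{-1}\gamma_L)_{|_\Gamma}=1,\]
of $C^2_\Gamma$ is already left-invariant, i.e.\ lives in $dG^2(\g,J)$.

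The key computation is as follows. Because $|J|+|L|\le 2$, condition $(\mathcal D_2)$ upgrades the side condition $(\beta_J^{-1}\gamma_L)_{|_\Gamma}=1$ to the identity $\alpha_J\bar\alpha_L^{-1}=1$ of $C^\infty$-characters of $\C^n$. Using Lemma \ref{charr} I would then factor $\alpha_J=\beta_J h_J$ and $\bar\alpha_L=\gamma_L k_L$ with $h_J,k_L$ holomorphic characters, so that $\alpha_J\bar\alpha_L^{-1}=1$ rearranges to $\beta_J\gamma_L^{-1}=h_J^{-1}k_L$. The left-hand side is a unitary character of $\C^n$ while the right-hand side is a holomorphic one; a bounded holomorphic character of $\C^n$ is identically $1$ by Liouville, so both sides are trivial. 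Consequently the scalar coefficient $(\alpha_J\beta_J^{-1})(\bar\alpha_L^{-1}\gamma_L)=h_J k_L^{-1}$ appearing in the generator equals $1$ everywhere, and the generator coincides with the genuinely left-invariant element $X_I\wedge \alpha_J Y_J\otimes \bar x_K\wedge\bar\alpha_L^{-1}\bar y_L\in dG^2(\g,J)$.

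Having established $\phi(t)\in dG^2(\g,J)$, the subbundle $L_{\phi(t)}=(1+\phi(t))L_{\mathcal J}\subset (TG\oplus T^{\ast}G)\otimes\C$ is itself left-invariant, so the resulting deformed generalized complex structure on $G/\Gamma$ descends from a left-invariant one on $G$, which is precisely the assertion of the theorem. The most delicate step in this plan is the Liouville-type argument that promotes the equality $(\beta_J^{-1}\gamma_L)_{|_\Gamma}=1$ on the lattice to an identity of characters on all of $\C^n$; this is exactly what is packaged by combining Lemma \ref{charr} with the hypothesis $(\mathcal D_2)$, and it is the only place where the restriction to degree $\le 2$ matters.
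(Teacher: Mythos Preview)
Your argument is correct and follows precisely the route indicated in the paper: invoke Corollary \ref{DEFCST11} to land $\phi(t)$ in $C^2_\Gamma$, then use $(\mathcal D_2)$ to show $C^2_\Gamma\subset dG^2(\g,J)$. The paper states the latter inclusion (in the paragraph immediately preceding the theorem) without justification; your Liouville argument is exactly the content of the uniqueness clause in Lemma \ref{charr} applied to the character $\alpha_J=\bar\alpha_L$, and supplies the missing detail.
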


\begin{theorem}\label{smsppp}
Let $(G,\Gamma)$  be  as in Assumption \ref{Ass} with Assumption \ref{unim}.
We assume the following conditions:

(a) $(N,J)$ is a  $\nu$-step complex nilpotent Lie group.

(b) For any $J,L\subset [m]$,  $(\beta^{-1}_{J}\gamma_{L})_{\vert_{\Gamma}}=1$ if and only if $(\alpha_{J}\bar\alpha^{-1}_{L})_{\vert_{\Gamma}}=1$.
 \\ 
Then the formal power series $\phi(t)$ as in Section \ref{GENK} is a finite sum
\[\phi(t)=\sum_{i=1}^{\nu} \phi_{i}(t)
\]
and we have
\[H[\phi(t),\phi(t)]=\sum_{i+j\le \nu} H[\phi_{i}(t),\phi_{j}(t)].
\]
In particular  $\Kur^{gen}(G/\Gamma, J)$ is cut out by polynomial equations of degree 
at most $\nu$.
\end{theorem}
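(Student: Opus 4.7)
The plan is to mimic the proof of Theorem \ref{Cut}, with the finite-dimensional sub-DGA $C^{\ast}_{\Gamma}$ of Theorem \ref{DGAI} playing the role of $dG^{\ast}(\g,J)$. The first reduction is to pass to this finite-dimensional setting: the inclusion $\bar\ast_{g}(C^{\ast}_{\Gamma})\subset C^{\ast}_{\Gamma}$ was already verified in Subsection \ref{spdeff} via an explicit formula, so Lemma \ref{RED} applies and gives $\phi(t)\in C^{2}_{\Gamma}$ for every $t\in Kur^{gen}(G/\Gamma,J)$. It then suffices to prove that the iteration
\[\phi_{r}(t)=\frac{1}{2}\sum_{s=1}^{r-1}\bar\partial^{\ast}G[\phi_{s}(t)\bullet\phi_{r-s}(t)]\]
carried out inside $C^{\ast}_{\Gamma}$ stabilizes after $\nu$ steps, and that $[\phi_{i}(t)\bullet\phi_{j}(t)]=0$ whenever $i+j>\nu$.

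Next, I decompose $C^{2}_{\Gamma}$ by the ``$\n$-bidegree'' coming from the partition $\g=\C^{n}\oplus\n$. Three structural inputs let one import the computation of Theorem \ref{Cut}. First, $\C^{n}$ is abelian, so brackets purely in the $\C^{n}$-directions vanish. Second, hypothesis (a) says $(N,J_{N})$ is a complex Lie group, so applied to the $\n$-factor Remark \ref{repa} tells us that terms whose $\n$-part is purely antiholomorphic bracket-annihilate. Third, Assumption \ref{unim} ($\alpha_{[m]}=1$) makes $G$ unimodular, which yields $\bar\partial_{\vert_{\bigwedge^{n+m-1}\g^{\ast}_{0,1}}}=0$ and, via the Hodge-star formula for $\bar\partial^{\ast}$ on $C^{\ast}_{\Gamma}$ from Subsection \ref{spdeff}, forces $\bar\partial^{\ast}$ to annihilate the subspaces of $C^{\ast}_{\Gamma}$ with exactly one antiholomorphic-form factor. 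Hypothesis (b) is precisely what keeps the $\C^{n}$-weights arising in the iteration compatible with membership in $C^{\ast}_{\Gamma}$: the defining condition $(\beta_{J}^{-1}\gamma_{L})_{\vert_{\Gamma}}=1$ of $C^{\ast}_{\Gamma}$ is tied by (b) to the multiplicatively additive condition $(\alpha_{J}\bar\alpha_{L}^{-1})_{\vert_{\Gamma}}=1$, which is preserved when Schouten brackets multiply $\C^{n}$-weights.

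With these inputs in place, the three bracket-splitting pieces of $[C^{2}_{\Gamma}\bullet C^{2}_{\Gamma}]$ (organized as in Theorem \ref{Cut}) together with the $\bar\partial^{\ast}G$-vanishing on two of the three pieces give inductively that $\phi_{i}(t)$ lies in the subspace of $C^{\ast}_{\Gamma}$ whose $\n$-part sits in $C_{i-1}\n_{1,0}\otimes\bigwedge^{1}\n_{0,1}^{\ast}$. The $\nu$-step nilpotency of $N$ from (a) then forces $\phi_{\nu+1}(t)=0$ and $[\phi_{i}(t)\bullet\phi_{j}(t)]=0$ for $i+j>\nu$, yielding both the finite-sum expression for $\phi(t)$ and the stated formula for $H[\phi(t),\phi(t)]$. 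The main obstacle will be the careful bookkeeping of the $\C^{n}$-characters $\alpha_{J}\beta_{J}^{-1}$ and $\bar\alpha_{L}^{-1}\gamma_{L}$ through repeated Schouten brackets and applications of $\bar\partial^{\ast}G$: one must verify that every iterate remains in $C^{\ast}_{\Gamma}$ and that the $\n_{1,0}$-factor drops one step in the lower central series at each bracket. Condition (b) is exactly what closes this bookkeeping up on the finite-dimensional subcomplex.
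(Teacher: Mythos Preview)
Your overall architecture (reduce to $C^{\ast}_{\Gamma}$ via Lemma \ref{RED}, then rerun the argument of Theorem \ref{Cut}) is correct, but you have misread the role of hypothesis (b), and without the correct reading the lower-central-series induction does not close.

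You say (b) is needed ``to keep the iterates in $C^{\ast}_{\Gamma}$''. That is not the issue: by Theorem \ref{DGAI} the space $C^{\ast}_{\Gamma}$ is already a sub-DGA, and by Subsection \ref{spdeff} it is $\bar\ast_{g}$-stable, so closure under the Kuranishi recursion is automatic with or without (b). The actual content of (b) is that it \emph{trivializes the coefficient characters}. If $(\alpha_{J}\bar\alpha_{L}^{-1})_{\vert_{\Gamma}}=1$ then $\alpha_{J}\bar\alpha_{L}^{-1}$ factors through the compact torus $\C^{n}/\Gamma^{\prime}$ and is therefore unitary; comparing with the holomorphic-times-unitary factorization of Lemma \ref{charr} gives $\alpha_{J}\bar\alpha_{L}^{-1}=\beta_{J}\gamma_{L}^{-1}$, hence $\alpha_{J}\beta_{J}^{-1}\cdot\bar\alpha_{L}^{-1}\gamma_{L}=1$. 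Under (b) the defining condition of $C^{p,q}_{\Gamma}$ becomes $(\alpha_{J}\bar\alpha_{L}^{-1})_{\vert_{\Gamma}}=1$, so
\[C^{p,q}_{\Gamma}=\left\langle X_{I}\wedge Y_{J}\otimes \bar x_{K}\wedge \bar y_{L}\ \middle\vert\ (\alpha_{J}\bar\alpha_{L}^{-1})_{\vert_{\Gamma}}=1\right\rangle,\]
with all basis elements having constant coefficient $1$.

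This is exactly what the paper exploits: once the coefficients are constants, one has an embedding $\psi:C^{\ast}_{\Gamma}\hookrightarrow dG^{\ast}(\C^{n}\oplus\n,J)$ into the Lie-algebra DGA of the \emph{direct} product $\C^{n}\times N$, which is a complex $\nu$-step nilpotent Lie group. Checking that $\psi\circ\bar\ast_{g}=\bar\ast_{h}\circ\psi$ for the metric $h=\sum x_{i}\bar x_{i}+\sum y_{i}\bar y_{i}$ on $\C^{n}\times N$, the entire Kuranishi recursion is transported into $dG^{\ast}(\C^{n}\oplus\n,J)$, and Theorem \ref{Cut} applies verbatim.

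Without first observing that the coefficients collapse to $1$, your induction breaks. With non-constant holomorphic coefficients $f,g$ on $\C^{n}$, the Schouten bracket produces terms such as
\[[fX_{i}\otimes\bar a,\ gY_{j}\otimes\bar b]\supset f(X_{i}g)\,Y_{j}\otimes\bar a\wedge\bar b,\]
and here $Y_{j}\in\n_{1,0}$ has not descended at all in the lower central series. So the claim ``the $\n_{1,0}$-factor drops one step at each bracket'' fails as stated; it only holds once (b) has been used to kill these derivative terms by making $g$ constant. In short, (b) is not bookkeeping for membership --- it is the mechanism that replaces the semi-direct product bracket by the direct-sum bracket, after which Theorem \ref{Cut} does the rest.
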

\begin{remark}
The condition (b) is more general than the condition (${\mathcal D}_{r}$).
\end{remark}
\begin{proof}
If $(\alpha_{J}\bar\alpha^{-1}_{L})_{\vert_{\Gamma}}=1$, then $\alpha_{J}\bar\alpha^{-1}_{L}$ is unitary and we have $\alpha_{J}\bar\alpha^{-1}_{L}=\beta_{J}\gamma^{-1}_{L}$.
By the assumption (b), we have
\[C^{p,q}_{\Gamma}=\left\langle X_{I}\wedge Y_{J}\otimes \bar x_{K}\wedge \bar y_{L} \vert (\alpha_{J}\bar\alpha^{-1}_{L})_{\vert_{\Gamma}}=1  \right\rangle.
\]
Since $Y_{1},\dots ,Y_{m}$ and $\bar y_{1},\dots,\bar y_{m}$ are 	bases of $\n_{1,0}$ and $\n_{0,1}^{\ast}$ respectively,
 we have an embedding $\psi\colon C^{s}_{\Gamma}\hookrightarrow dG^{s}(\C^{n}\oplus \n, J)$. 
Consider the left-invariant Hermitian metric 
\[ h=\sum x_{i} \bar x_{i}+ \sum y_{i}\bar y_{i}\]
on the simply connected complex nilpotent Lie group $\C^{n}\times N$.
Then for the left-invariant Hermitian metric 
\[g=\sum x_{i} \bar x_{i}+ \sum \alpha_{i}^{-1}\bar\alpha^{-1}_{i}y_{i}\bar y_{i}
\]
on $G$, we have $\psi\circ \bar\ast_{g}=\bar\ast_{h}\circ \psi$.
Hence the formal power series $\phi(t)$ as in Section \ref{GENK} is constructed by the DGA $dG^{s}(\C^{n}\oplus \n, J)$.
As the proof of  Theorem \ref{Cut}, we can show the theorem. 
\end{proof}
\begin{example}
Let $G=\C\ltimes_{\phi} N$ such that $N=N_{1}\times N_{2}$, both $N_{1}$ and $N_{2}$ are the $3$-dimensional complex  Heisenberg groups and the action $\phi$ is given by 
\begin{multline*}
\phi(x+\sqrt{-1}y)\left( \left(
\begin{array}{ccc}
1&  w_{1,1}&   w_{1,3} \\
0&     1& w_{1,2}\\
0& 0& 1 
\end{array}
\right),
\left(
\begin{array}{ccc}
1&  w_{2,1}&   w_{2,3} \\
0&     1& w_{2,2}\\
0& 0& 1 
\end{array}
\right)\right)\\
=\left( \left(
\begin{array}{ccc}
1&  e^{k_{1}x}w_{1,1}&e^{(k_{1}+k_{2})x}   w_{1,3} \\
0&     1&e^{k_{2}x} w_{1,2}\\
0& 0& 1 
\end{array}
\right),
\left(
\begin{array}{ccc}
1&  e^{-k_{1}x}w_{1,1}&e^{-(k_{1}+k_{2})x}   w_{1,3} \\
0&     1&e^{-k_{2}x} w_{1,2}\\
0& 0& 1 
\end{array}
\right)\right)
\end{multline*}
where $k_{1}$ and $k_{2}$ are positive integers.
Taking  a basis $Y_{1,1},Y_{1,2},Y_{1,3},Y_{2,1},Y_{2,2},Y_{2,3}$ of 
$\n_{1,0}$ such that 
\[[Y_{1,1}, Y_{1,2}]=Y_{1,3},\,\,\,\,\, [Y_{2,1}, Y_{2,2}]=Y_{2,3},
\]
the action $\phi$ on $\n_{1,0}$ is represented by
\begin{multline*}
diag(\alpha_{1,1},\alpha_{1,2},\alpha_{1,3},\alpha_{2,1},\alpha_{2,2},\alpha_{2,3})\\
=diag(e^{k_{1}x},e^{k_{2}x},e^{(k_{1}+k_{2})x},e^{-k_{1}x},e^{-k_{2}x},e^{-(k_{1}+k_{2})x}).
\end{multline*}
For the coordinate $z=x+\sqrt{-1}y$, we have
\[\g_{1,0}=\left\langle \frac{\partial}{\partial z}, e^{k_{1}x}Y_{1,1},e^{k_{2}x}Y_{1,2},e^{(k_{1}+k_{2})x}Y_{1,3},e^{-k_{1}x}Y_{2,1},e^{-k_{2}x}Y_{2,2},e^{-(k_{1}+k_{2})x}Y_{2,3}
\right\rangle
\]
and 
\begin{multline*}
\textstyle\bigwedge  \g_{1,0}^{\ast}\otimes \bigwedge \g_{0,1}^{\ast}\\
=\left\langle dz, e^{-k_{1}x} y_{1,1},e^{k_{2}x} y_{1,2},e^{-(k_{1}+k_{2})x} y_{1,3},e^{k_{1}x} y_{2,1},e^{k_{2}x} y_{2,2},e^{(k_{1}+k_{2})x} y_{2,3}\right\rangle\\
\otimes \left\langle d\bar z, e^{-k_{1}x}\bar y_{1,1},e^{k_{2}x}\bar y_{1,2},e^{-(k_{1}+k_{2})x}\bar y_{1,3},e^{k_{1}x}\bar y_{2,1},e^{k_{2}x}\bar y_{2,2},e^{(k_{1}+k_{2})x}\bar y_{2,3}
\right\rangle
\end{multline*}
where $y_{1,1},y_{1,2},y_{1,3},y_{2,1},y_{2,2},y_{2,3}$ is the  dual basis of  $Y_{1,1},Y_{1,2},Y_{1,3},Y_{2,1},Y_{2,2},Y_{2,3}$.
Taking unitary characters
\begin{multline*}
(\beta_{1,1},\beta_{1,2},\beta_{1,3},\beta_{2,1},\beta_{2,2},\beta_{2,3})\\
=(e^{-\sqrt{-1}k_{1}y},e^{-\sqrt{-1}k_{2}y},e^{-\sqrt{-1}(k_{1}+k_{2})y},e^{\sqrt{-1}k_{1}y},e^{\sqrt{-1}k_{2}y},e^{\sqrt{-1}(k_{1}+k_{2})y}),
\end{multline*}
each $\alpha_{i,j}\beta_{i,j}^{-1}$ is holomorphic.

Take a unimodular matrix $B \in  SL(2,Z)$ with distinct positive eigenvalues $\lambda$ and $\lambda$, and set $a =
\log \lambda$. 
Then as similar to \cite{sawai-yamada}, for any non-zero number $b\in \R$, we have a lattice $\Gamma=(a\Z+\sqrt{-1}b\Z)\ltimes \Gamma^{\prime\prime}$ such that
$\Gamma^{\prime\prime}$ is a lattice  of $N$ which is
given by
\begin{multline*}\Gamma^{\prime\prime}
=\left\{ \left( \left(
\begin{array}{ccc}
1&  u_{1,1}+\lambda u_{2,1}&   u_{1,3}+\lambda u_{2,3} \\
0&     1& u_{1,2}+\lambda u_{2,2}\\
0& 0& 1 
\end{array}
\right),\right.\right.\\\left.
\left(   
\begin{array}{ccc}
1&  u_{1,1}+\lambda^{-1} u_{2,1}&   u_{1,3}+\lambda^{-1} u_{2,3} \\
0&     1& u_{1,2}+\lambda^{-1} u_{2,2}\\
0& 0& 1 
\end{array}
\right)\right)\\
\left.{\Big \vert}
u_{i,j}\in \Z+\sqrt{-1}\Z \right\}.
\end{multline*}
Suppose $b\not=\frac{r}{s}\pi$ for any $r,s\in \Z$. 
Then the assumptions in Theorem \ref{lefiviv} and Theroem \ref{smsppp} hold.
Hence any sufficiently small deformation of the generalized complex structure $\mathcal J$ associated with the complex structure of $G/\Gamma$ is  equivalent to a  generalized complex structure on $G/\Gamma$ which is induced by a left-invariant generalized complex structure on $G$
and $\Kur^{gen}(G/\Gamma, J)$ is cut out by polynomial equations of degree 
at most $2$.

We assume $k_{1}=1$, $k_{2}=2$.
Then we have
\[C_{\Gamma}^{1}=\left\langle \frac{\partial}{\partial z}, d\bar z \right\rangle
\]
\begin{multline*}
C_{\Gamma}^{2}
=\left\langle
 Y_{1,1}\wedge Y_{2,1}, Y_{1,2}\wedge Y_{2,2}, Y_{1,3}\wedge Y_{2,3},\right.\\
\frac{\partial}{\partial z}\wedge d\bar z, Y_{1,1}\wedge \bar y_{2,1}, Y_{2,1}\wedge \bar y_{1,1}, Y_{1,2}\wedge \bar y_{2,2}, Y_{2,2}\wedge \bar y_{1,2}, Y_{1,3}\wedge \bar y_{2,3},Y_{2,3}\wedge \bar y_{1,3}\\
 \bar y_{1,1}\wedge \bar y_{2,1}, \bar y_{1,2}\wedge \bar y_{2,2}, \bar y_{1,3}\wedge \bar y_{2,3}\rangle.
\end{multline*}

We consider the Hermitian metric 
\begin{multline*}g=dzd\bar z+e^{-2x}y_{11}\bar y_{11}+e^{-4x}y_{1,2}\bar y_{1,2}+e^{-6x}y_{1,3}\bar y_{1,3}\\
+e^{2x}y_{2,1}\bar y_{2,1}+e^{4x}y_{2,2}\bar y_{2,2}+e^{6x}y_{2,3}\bar y_{2,3}.
\end{multline*}
Then we have
\begin{multline*}
{\mathcal H}^{2}_{g}(G/\Gamma,J)
=\left\langle
 Y_{1,1}\wedge Y_{2,1}, Y_{1,2}\wedge Y_{2,2}, Y_{1,3}\wedge Y_{2,3},\right.\\
\frac{\partial}{\partial z}\wedge d\bar z, Y_{1,1}\wedge \bar y_{2,1}, Y_{2,1}\wedge \bar y_{1,1}, Y_{1,2}\wedge \bar y_{2,2}, Y_{2,2}\wedge \bar y_{1,2},\\
 \bar y_{1,1}\wedge \bar y_{2,1}, \bar y_{1,2}\wedge \bar y_{2,2}\rangle.
\end{multline*}
For parameters $t=(t_{1121},t_{1222},t_{1323},t_{0}, t_{11}^{\overline{21}},t_{21}^{\overline{11}}, t_{12}^{\overline{22}},t_{22}^{\overline{12}},t^{\overline{11}\overline{21}},t^{\bar1\bar2\bar2\bar2})$,
taking 
\begin{multline*}
\phi_{1}(t)=t_{1121}Y_{1,1}\wedge Y_{2,1}+t_{1222}Y_{1,2}\wedge Y_{2,2}+t_{1323}Y_{1,3}\wedge Y_{2,3}\\
+t_{0}\frac{\partial}{\partial z}\wedge d\bar z+ t_{11}^{\overline{21}}Y_{1,1}\wedge \bar y_{2,1}+t_{21}^{\overline{11}}Y_{2,1}\wedge \bar y_{1,1}+ t_{12}^{\overline{22}}Y_{1,2}\wedge \bar y_{2,2}+t_{22}^{\overline{12}}Y_{2,2}\wedge \bar y_{1,2}\\
+t^{\overline{11}\overline{21}}y_{1,1}\wedge \bar y_{2,1}+t^{\overline{12}\overline{22}}\bar y_{1,2}\wedge \bar y_{2,2},
\end{multline*}
we have 
\begin{multline*}
\phi(t)=t_{1121}Y_{1,1}\wedge Y_{2,1}+t_{1222}Y_{1,2}\wedge Y_{2,2}+t_{1323}Y_{1,3}\wedge Y_{2,3}\\
+t_{0}\frac{\partial}{\partial z}\wedge d\bar z+ t_{11}^{\overline{21}}Y_{1,1}\wedge \bar y_{2,1}+t_{21}^{\overline{11}}Y_{2,1}\wedge \bar y_{1,1}+ t_{12}^{\overline{22}}Y_{1,2}\wedge \bar y_{2,2}+t_{22}^{\overline{12}}Y_{2,2}\wedge \bar y_{1,2}\\
+t^{\overline{11}\overline{21}}y_{1,1}\wedge \bar y_{2,1}+t^{\overline{12}\overline{22}}\bar y_{1,2}\wedge \bar y_{2,2}\\
-\frac{1}{2}t_{11}^{\overline{21}}t_{12}^{\overline{22}}Y_{13}Y_{13}\wedge \bar y_{23}-\frac{1}{2}t_{21}^{\overline{11}}t_{22}^{\overline{12}}Y_{23}\wedge \bar y_{13}
\end{multline*}
where $\phi(t)$ is the formal power series $\phi(t)$ as in Section \ref{GENK}.
We have
\begin{multline*}
\Kur^{gen}(M,J)\\
=\{t=(t_{1121},t_{1222},t_{1323},t_{0}, t_{11}^{\overline{21}},t_{21}^{\overline{11}}, t_{12}^{\overline{22}},t_{22}^{\overline{12}},t^{\overline{11}\overline{21}},t^{\bar1\bar2\bar2\bar2})
\\ \vert \; \vert t\vert<\delta,\;  H([\phi(t)\bullet\phi(t)])=0\}.
\end{multline*}
$\Kur^{gen}(M,J)$ is defined by the following quadratic polynomial equations
\[t_{1112}t_{1222}=0,\,\, \, t_{1121}t_{12}^{\overline{22}}=0,\,\,\, t_{1121}t_{22}^{\overline{12}}=0,\,\,\, t_{1222}t_{11}^{\overline{21}}=0,\,\,\, t_{1222}t_{21}^{\overline{11}}=0.
\]
\begin{remark}
In this case, the ordinary Kuranishi space $\Kur(G/\Gamma, J)$ is given by
\[
\Kur(G/\Gamma, J)
=\{t=(0,0,0,t_{0}, t_{11}^{\overline{21}},t_{21}^{\overline{11}}, t_{12}^{\overline{22}},t_{22}^{\overline{12}},0,0) \vert \; \vert t\vert<\delta,\;  H([\phi(t)\bullet\phi(t)])=0\}
\]
and hence $\Kur(G/\Gamma, J)$ is unobstructed.
But $\Kur^{gen}(G/\Gamma, J)$ is obstructed.
\end{remark}

\end{example}

\section{DGAs of complex parallelizable solvmanifolds}\label{ParL}
\subsection{DGA}
Let $G$ be a simply connected $n$-dimensional complex solvable Lie group.
Consider the Lie algebra $\g_{1,0}$ (resp. $\g_{0,1}$) of the left-invariant holomorphic (resp. anti-holomorphic) vector fields on $G$.
Let $N$ be the nilradical  of $G$.
We can take a  simply connected complex nilpotent subgroup $C\subset G$  such that $G=C\cdot N$ (see \cite{dek}).
Since $C$ is nilpotent, the map
\[C\ni c \mapsto ({\rm Ad}_{c})_{s}\in {\rm Aut}(\g_{1,0})\]
is a homomorphism where $({\rm Ad}_{c})_{s}$ is the semi-simple part of ${\rm Ad}_{s}$.

We have a basis $X_{1},\dots,X_{n}$ of $\g_{1,0}$ such that \[({\rm Ad}_{c})_{s}={\rm diag} (\alpha_{1}(c),\dots,\alpha_{n}(c))\] for $c\in C$.
Let $x_{1},\dots, x_{n}$ be the basis of $\g^{\ast}_{1,0}$ which is dual to $X_{1},\dots ,X_{n}$.

\begin{theorem}{\rm (\cite[Corollary 6.2 and its proof]{KDD})}\label{MMTT}
Let $B^{\ast}_{\Gamma}$ be the subcomplex of $(A^{0,\ast}(G/\Gamma),\bar\partial) $ defined as
\[B^{\ast}_{\Gamma}=\left\langle \frac{\bar\alpha_{I}}{\alpha_{I} }\bar x_{I}{\Big \vert}\left(\frac{\bar\alpha_{I}}{\alpha_{I}}\right)_{ \vert_{\Gamma}}=1\right\rangle.
\]
Then the inclusion $B^{\ast}_{\Gamma}\subset A^{0,\ast}(G/\Gamma) $ induces an isomorphism
\[H^{\ast}(B^{\ast}_{\Gamma})\cong H^{0,\ast}(G/\Gamma).
\]
\end{theorem}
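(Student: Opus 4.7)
The plan is to verify in two steps that $B^{\ast}_{\Gamma}$ is a well-defined subcomplex of the Dolbeault complex $(A^{0,\ast}(G/\Gamma),\bar\partial)$ and then produce the cohomology isomorphism.

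First, I would carry out the basic structural check. Each $\alpha_i$ is a character of $C$ (because $c\mapsto ({\rm Ad}_c)_s$ is a homomorphism on the nilpotent group $C$), and since the nilradical $N$ acts unipotently we have $\alpha_i|_N=1$; hence $\bar\alpha_I/\alpha_I$ is a well-defined $C^{\infty}$ function on $G$ that is $\Gamma$-invariant exactly when $(\bar\alpha_I/\alpha_I)|_{\Gamma}=1$. The left-invariant forms $\bar x_I$ descend to $G/\Gamma$ unconditionally, so the generators $\frac{\bar\alpha_I}{\alpha_I}\bar x_I$ live on $G/\Gamma$ under the stated hypothesis. For $\bar\partial$-stability, complex parallelizability gives $\bar\partial\bar x_k=-\tfrac12\sum\overline{c^k_{ij}}\,\bar x_i\wedge\bar x_j$, where the $c^k_{ij}$ are the complex structure constants of $\g_{1,0}$ in the eigenbasis $X_1,\dots,X_n$; since $[X_i,X_j]$ lies in the $\alpha_i\alpha_j$-eigenspace of ${\rm Ad}$, only indices with $\alpha_k=\alpha_i\alpha_j$ contribute, and the same weight-matching works with the conjugates. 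Combining this with the Leibniz rule applied to the character factors $\bar\alpha_I/\alpha_I$ shows $\bar\partial B^{\ast}_{\Gamma}\subset B^{\ast}_{\Gamma}$.

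Second, I would produce the quasi-isomorphism by embedding $B^{\ast}_{\Gamma}$ into the ``full twisted complex''
\[\tilde B^{\ast}=\bigoplus_{I}\left\langle\frac{\bar\alpha_I}{\alpha_I}\bar x_I\right\rangle\subset A^{0,\ast}(G),\]
obtained by dropping the trivial-on-$\Gamma$ restriction and passing to the universal cover. The same weight computation shows that $\tilde B^{\ast}$ is a subcomplex, and tautologically $B^{\ast}_{\Gamma}=(\tilde B^{\ast})^{\Gamma}$. Exploiting the Mostow-type fibration $G/\Gamma\to C/\Gamma'$ with fiber the complex parallelizable nilmanifold $N/\Gamma''$, one applies Sakane's theorem (case (C) of the preceding theorem) on the fibers to see that left-invariant antiholomorphic forms already compute fiber Dolbeault cohomology. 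A Leray spectral sequence on $G/\Gamma$ then reduces the global computation to harmonic analysis on the base $C/\Gamma'$, where the $C$-action on $\g^{\ast}_{0,1}$ decomposes into the characters $\bar\alpha_I/\alpha_I$.

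The main obstacle is showing that characters $\bar\alpha_I/\alpha_I$ whose restriction to $\Gamma'$ is nontrivial contribute no surviving cohomology, so that exactly the subspace $B^{\ast}_{\Gamma}$ captures the whole of $H^{0,\ast}(G/\Gamma)$. This requires the semisimplicity of the $C$-action (ensuring characters decouple) together with a Fourier/averaging argument on $C/\Gamma'$ showing the nontrivial characters have vanishing zeroth cohomology; this is precisely the harmonic-analytic heart of \cite[Section 6]{KDD}, and once established, identifying $H^{\ast}(B^{\ast}_{\Gamma})$ with the $\Gamma$-invariant part of $H^{\ast}(\tilde B^{\ast})$, which in turn equals $H^{0,\ast}(G/\Gamma)$, completes the proof.
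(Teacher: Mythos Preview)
The paper itself gives no proof of this theorem: it is quoted from \cite[Corollary~6.2 and its proof]{KDD} and used as a black box to prove the subsequent Theorem~\ref{COMPAR}. So there is nothing in the present paper to compare your argument against; one has to measure your sketch against the argument in \cite{KDD}.

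Your first step is essentially correct and standard: the characters $\alpha_i$ are holomorphic on the complex group $C$, the weight-matching on structure constants is exactly how $\bar\partial$-closure is verified, and the descent to $G/\Gamma$ is as you say.

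Your second step has a structural gap. You invoke a fibration $G/\Gamma\to C/\Gamma'$ with nilmanifold fiber $N/\Gamma''$, but in the setting of Section~\ref{ParL} no such splitting is assumed or generally available: the equality $G=C\cdot N$ is only a set-theoretic product (with $C$ nilpotent but not normal and $C\cap N$ not necessarily trivial), and $\Gamma$ is \emph{not} assumed to decompose as $\Gamma'\ltimes\Gamma''$. That splitting hypothesis is precisely what distinguishes Assumption~\ref{Ass} in Section~6 from the complex parallelizable case here, and is why the two sections need separate treatments. The Mostow fibration that does exist has base the torus $(G/N)/(\Gamma N/N)$, not $C/\Gamma'$, and Sakane's theorem on the fiber is not what drives the computation.

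What you are missing, and what replaces the fibration picture, is the observation that $\bar\alpha_I/\alpha_I$ is \emph{unitary}: since $\alpha_I$ is holomorphic on the complex group $C$, one has $\lvert\bar\alpha_I/\alpha_I\rvert=1$. The argument in \cite{KDD} packages the $(0,\ast)$-Dolbeault complex as a sum of complexes twisted by the flat unitary line bundles associated to these characters, and then a Hodge-theoretic (rather than Leray/Fourier) argument shows that a unitary character contributes nontrivially only when its restriction to $\Gamma$ is trivial. That unitarity, not a base--fiber decomposition, is the mechanism that singles out exactly the summands defining $B^{\ast}_{\Gamma}$.
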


By this theorem we prove:

\begin{theorem}\label{COMPAR}
We consider the subspace 
\[C^{p,q}_{\Gamma}= \textstyle\bigwedge ^{p} \g_{1,0} \otimes B^{q}_{\Gamma}\]
 of $A^{0,q}(G/\Gamma, \bigwedge^{p}T_{1,0}G/\Gamma)$.
Denote $C^{r}_{\Gamma}=\bigoplus _{p+q=r}C^{p,q}_{\Gamma}$.
Then 
$C^{\ast}_{\Gamma}$ is a sub-DGA of $dG^{\ast}(G/\Gamma,J)$ and the inclusion induces a cohomology isomorphism.
\end{theorem}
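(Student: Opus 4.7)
The plan is to exploit complex parallelizability of $G$ so that the DGA $dG^{\ast}(G/\Gamma,J)$ itself admits a tensor product decomposition, thereby reducing Theorem \ref{COMPAR} to the already established Dolbeault statement of Theorem \ref{MMTT}. Concretely, since the left-invariant holomorphic frame $X_{1},\dots,X_{n}$ globally trivializes $T_{1,0}G/\Gamma$, every element of $A^{0,q}(G/\Gamma,\bigwedge^{p}T_{1,0}G/\Gamma)$ decomposes uniquely as $\sum_{I}\omega_{I}\otimes X_{I}$ with $\omega_{I}\in A^{0,q}(G/\Gamma)$, so
\[
dG^{r}(G/\Gamma,J)=\bigoplus_{p+q=r}\bigwedge\nolimits^{p}\g_{1,0}\otimes A^{0,q}(G/\Gamma).
\]

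Under this identification I would show that the three operations factor cleanly through the tensor product, in the same spirit as Remark \ref{repa} but globalised. First $\bar\partial(\omega\otimes X_{I})=\bar\partial\omega\otimes X_{I}$, since the $X_{i}$ are holomorphic. Second the wedge product is literally the tensor wedge, up to Koszul signs. Third, because each $X_{i}$ is of type $(1,0)$, the contraction $i_{X_{i}}d\bar y$ vanishes for every $\bar y\in A^{0,1}(G/\Gamma)$, and iterating the graded Leibniz rule for $[\bullet]$ shows
\[
[\omega\otimes V\bullet \omega'\otimes V']=\pm\,\omega\wedge\omega'\otimes[V\bullet V'],
\]
where $[V\bullet V']$ is the purely algebraic Schouten bracket on $\bigwedge\g_{1,0}$. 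This is the key computation, and in fact the main obstacle: the vanishing $i_{X_{i}}d\bar y=0$ in Remark \ref{repa} was only stated on left-invariant forms, but because it is a pointwise identity it extends to arbitrary forms $\bar y$ on $G/\Gamma$.

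With the factorised DGA structure in hand, verifying that $C^{\ast}_{\Gamma}=\bigwedge\g_{1,0}\otimes B^{\ast}_{\Gamma}$ is closed reduces to the statement that $B^{\ast}_{\Gamma}$ is a sub-DGrA of $(A^{0,\ast}(G/\Gamma),\bar\partial,\wedge)$. Closure under $\bar\partial$ is part of Theorem \ref{MMTT}. Closure under wedge is immediate from the definition: for disjoint $I,J$,
\[
\tfrac{\bar\alpha_{I}}{\alpha_{I}}\bar x_{I}\wedge\tfrac{\bar\alpha_{J}}{\alpha_{J}}\bar x_{J}=\pm\tfrac{\bar\alpha_{I\cup J}}{\alpha_{I\cup J}}\bar x_{I\cup J},
\]
and the unitary character $\bar\alpha_{I\cup J}/\alpha_{I\cup J}$ restricts trivially to $\Gamma$ because its two factors do. Combining this with the trivial remark that $\bigwedge\g_{1,0}$ is a sub-DGA of itself under Schouten bracket, all three operations preserve $C^{\ast}_{\Gamma}$.

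Finally, the cohomology comparison follows from a Künneth-type argument: the differential $\bar\partial$ acts only on the second tensor factor and preserves the $p$-grading, so
\[
H^{r}(C^{\ast}_{\Gamma})=\bigoplus_{p+q=r}\bigwedge\nolimits^{p}\g_{1,0}\otimes H^{q}(B^{\ast}_{\Gamma}),\qquad H^{r}(dG^{\ast}(G/\Gamma,J))=\bigoplus_{p+q=r}\bigwedge\nolimits^{p}\g_{1,0}\otimes H^{0,q}(G/\Gamma),
\]
and the inclusion induces on cohomology the tensor product of the identity on $\bigwedge^{p}\g_{1,0}$ with the Dolbeault isomorphism $H^{\ast}(B^{\ast}_{\Gamma})\cong H^{0,\ast}(G/\Gamma)$ of Theorem \ref{MMTT}. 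This completes the plan.
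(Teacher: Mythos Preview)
Your cohomology argument is correct and is essentially what the paper does: since the holomorphic frame $X_{1},\dots,X_{n}$ trivialises $T_{1,0}G/\Gamma$, the operator $\bar\partial$ acts only on the $A^{0,\ast}$ factor, and the K\"unneth splitting together with Theorem~\ref{MMTT} yields the isomorphism.

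The gap is in the closure under the Schouten bracket. Your claim that $i_{X_{i}}d\bar y=0$ for every $\bar y\in A^{0,1}(G/\Gamma)$ is false, and the justification that this is a ``pointwise identity'' is the error: $d$ is a first-order operator, not a tensor. The vanishing in Remark~\ref{repa} holds because for a \emph{left-invariant} $(0,1)$-form on a complex Lie group one has $d\bar y\in\bigwedge^{2}\g_{0,1}^{\ast}$ (no $(1,1)$-part); this encodes the integrability $[\g_{1,0},\g_{1,0}]\subset\g_{1,0}$, not a pointwise fact. For a general $\bar y=f\,\bar x_{j}$ with non-constant $f$, the $(1,1)$-component $\partial f\wedge\bar x_{j}$ is present and $i_{X_{i}}d\bar y=X_{i}(f)\,\bar x_{j}$, which is typically nonzero. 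Consequently the factorisation $[\omega\otimes V\bullet\omega'\otimes V']=\pm\,\omega\wedge\omega'\otimes[V\bullet V']$ fails: extra Lie-derivative terms appear, so closure of $C^{\ast}_{\Gamma}$ does \emph{not} reduce to wedge-closure of $B^{\ast}_{\Gamma}$.

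The paper's proof addresses precisely this missing term. For $\omega'=\tfrac{\bar\alpha_{I}}{\alpha_{I}}\bar x_{I}\in B^{\ast}_{\Gamma}$ and $X\in\g_{1,0}$ one computes $[X\bullet\omega']=L_{X}\omega'$; since $\bar\alpha_{I}$ and $\bar x_{I}$ are anti-holomorphic they are killed by $X$, while $\alpha_{I}\,d\alpha_{I}^{-1}$ is a left-invariant $1$-form, so $X(\alpha_{I}^{-1})=c\,\alpha_{I}^{-1}$ for a constant $c$. Hence $L_{X}\omega'=c\,\omega'$, which lies in $B^{\ast}_{\Gamma}$ but is in general nonzero. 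This eigen-vector behaviour of the generators of $B^{\ast}_{\Gamma}$ under $\g_{1,0}$, not any vanishing, is what makes $C^{\ast}_{\Gamma}$ bracket-closed.
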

\begin{proof}
By Theorem \ref{MMTT}, the inclusion $C^{\ast,\ast}_{\Gamma}\subset A^{0,\ast}(G/\Gamma, \textstyle\bigwedge ^{\ast}T_{1,0}G/\Gamma)$ induces a cohomology isomorphism
\[ \textstyle\bigwedge ^{\ast} \g_{1,0} \otimes H^{\ast}_{\bar\partial}(B^{\ast}_{\Gamma})\cong \bigwedge^{\ast} \g_{1,0} \otimes H^{0,\ast}_{\bar\partial}(G/\Gamma).\]
Hence it is sufficient to show that $C^{\ast}_{\Gamma}$ is closed under the Schouten bracket.

For any $X\in \g_{1,0}$,
we have
\[X(\alpha_{I}^{-1})=\alpha^{-1}_{I}\alpha_{I} d\alpha^{-1}_{I}(X).\]
Since $\alpha_{I} d\alpha^{-1}_{I}$ is left-invariant, $\alpha_{I} d\alpha^{-1}_{I}(X)$ is constant.
Hence for some constant $c$, we have
\[X(\alpha_{I}^{-1})=c\alpha_{I}^{-1}.\]
Since $\bar\alpha_{I}$ and $\bar x_{I}\in \bigwedge \g_{0,1}^{\ast}$ are anti-holomorphic,
we have
\[L_{X}\left(\frac{\bar\alpha_{I}}{\alpha_{I} }\bar x_{I}\right) =c\frac{\bar\alpha_{I}}{\alpha_{I} }\bar x_{I}.\]
Hence $\textstyle\bigwedge \g_{1,0}^{\ast}\otimes B^{\ast}_{\Gamma} $ is closed under the Schouten bracket.
\end{proof}

\subsection{Holomorphic Poisson cohomology}
Since the inclusion  
\[C^{\ast,\ast}_{\Gamma}\subset A^{0,\ast}(G/\Gamma, \textstyle\bigwedge ^{\ast}T_{1,0}G/\Gamma)\]
 induces a $\bar\partial$-cohomology isomorphism, by Lemma \ref{holpo}, we have:
\begin{corollary}\label{popar}
Let $\mu \in C^{2.0}_{\Gamma}$ be a holomorphic Poisson bi-vector field.
Then the inclusion $C^{\ast,\ast}_{\Gamma}\subset A^{0,\ast}(G/\Gamma, \textstyle\bigwedge ^{\ast} T_{1,0}G/\Gamma)$ induces an isomorphism between the total cohomology of the double complex $(C^{\ast,\ast}_{\Gamma},\bar\partial, [\mu\bullet])$ and the total cohomology of the double complex \[(A^{0,\ast}(G/\Gamma, \textstyle\bigwedge ^{\ast} T_{1,0}G/\Gamma),\bar\partial,  [\mu\bullet]).\]
Hence total cohomology of the double complex $(C^{\ast,\ast}_{\Gamma},\bar\partial, [\mu\bullet])$ is isomorphic to $H(G/\Gamma, \mu)$.
\end{corollary}
\subsection{Generalized deformation}
We consider the Hermitian metric
\[g=\sum x_{i}\bar x_{i}.
\]
Then for $x_{I}\frac{\bar\alpha_{J}}{\alpha_{J} }\bar x_{J}\in \bigwedge \g_{1,0}^{\ast}\otimes B^{\ast}_{\Gamma}$,
since $G$ is unimodular, 
we have
\[\bar\ast_{g}(x_{I}\wedge\frac{\bar\alpha_{J}}{\alpha_{J} }\bar x_{J})=x_{I^{\prime}}\wedge\frac{\alpha_{J}}{\bar\alpha_{J} }\bar x_{J^{\prime}}=x_{I^{\prime}}\wedge\frac{\bar\alpha_{J^{\prime}}}{\alpha_{J^{\prime}} }\bar x_{J^{\prime}}\in \bigwedge\g_{1,0}^{\ast}\otimes B^{\ast}_{\Gamma}
\]
where $I^{\prime}$ and $J^{\prime}$ are complements of $I$ and $J$ respectively.
Hence we have $\bar\ast_{g}(\textstyle\bigwedge  \g_{1,0}^{\ast}\otimes B^{\ast}_{\Gamma})\subset \bigwedge \g_{1,0}^{\ast}\otimes B^{\ast}_{\Gamma}$.
For the basis $X_{1},\dots X_{n}$, $x_{1},\dots x_{n}$, the complex structure on $G$ is 
\[J=\sqrt{-1}\sum(X_{1}\otimes x_{i})-\sqrt{-1}\sum(\bar X_{i}\otimes \bar x_{i}).\]
and this gives the generalized complex structure 
\[{\mathcal J}=\sqrt{-1}\sum (X_{i}\otimes x_{i}+\bar x_{i}\otimes \bar X_{i})-\sqrt{-1}\sum(\bar X_{i}\otimes\bar x_{i}+ x_{i}\otimes  X_{i}).
\] 
By Theorem \ref{COMPAR}, as the last section, we have:
\begin{corollary}\label{DEFCST}
Any sufficiently small deformation of the generalized complex structure $\mathcal J$ which is given by the complex Lie group $G$  is equivalent to
\[{\mathcal J}_{\epsilon}
=\sqrt{-1}\sum (X^{\epsilon}_{i}\otimes x^{\epsilon}_{i}+(\bar x_{i})^{\epsilon}\otimes (\bar X_{i})^{\epsilon})\\
-\sqrt{-1}\sum((\bar X_{i})^{\bar\epsilon}\otimes(\bar x_{i})^{\bar\epsilon}+ x_{i}^{\bar\epsilon}\otimes  X_{i}^{\bar\epsilon}).
\]
for some $\epsilon \in C^{2}_{\Gamma}$ where we write $E^{\epsilon}=E+i_{E}\epsilon$ for $E\in L_{\mathcal J}$.
\end{corollary}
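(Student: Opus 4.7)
The plan is to verify the hypotheses of Lemma \ref{RED} for the finite-dimensional sub-DGA $C^{\ast}_{\Gamma}$ of Theorem \ref{COMPAR}, conclude that every Kuranishi parameter $\phi(t)$ lies in $C^{2}_{\Gamma}$, and finally translate this membership into the stated explicit form of the deformed generalized complex structure using the description $L_{\phi(t)} = (1+\phi(t))L_{\mathcal J}$ from Theorem \ref{GDEF}.

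First I would check that the setting of Lemma \ref{RED} applies. Since $G$ is a complex Lie group, the holomorphic tangent bundle $T_{1,0}G/\Gamma$ is trivialized by the global frame $X_{1},\dots,X_{n}$, so the canonical bundle $\bigwedge^{n}T^{\ast}_{1,0}G/\Gamma$ is trivial. Theorem \ref{COMPAR} already provides that $C^{\ast}_{\Gamma}$ is a finite-dimensional sub-DGA of $dG^{\ast}(G/\Gamma,J)$ whose inclusion induces a cohomology isomorphism, and the computation preceding the statement shows $\bar\ast_{g}(C^{\ast}_{\Gamma})\subset C^{\ast}_{\Gamma}$ for the Hermitian metric $g=\sum x_{i}\bar x_{i}$; the key input here is the unimodularity of $G$, which forces $\bar\alpha_{J}/\alpha_{J}$ and $\alpha_{J}/\bar\alpha_{J}$ to carry the same unitary character condition, so the star operator preserves the defining basis of $B^{\ast}_{\Gamma}$.

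Next, applying Theorem \ref{GDEF}, every small deformation of $\mathcal J$ is equivalent to the generalized complex structure with maximally isotropic subspace $L_{\phi(t)} = (1+\phi(t))L_{\mathcal J}$ for some $t \in Kur^{gen}(G/\Gamma,J)$, where $\phi(t)$ is the formal power series from Section \ref{GENK}. Lemma \ref{RED} then gives $\phi(t) \in C^{2}_{\Gamma}$.

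Finally, I would unpack this. Writing $\epsilon=\phi(t)\in C^{2}_{\Gamma}$, the new isotropic subspace is $L_{\epsilon}=\{E+i_{E}\epsilon \mid E\in L_{\mathcal J}\}$, i.e.\ it has basis $\{\bar X_{i}^{\epsilon}, x_{i}^{\epsilon}\}$, while the conjugate subspace $\bar L_{\epsilon}$ has basis $\{X_{i}^{\bar\epsilon}, \bar x_{i}^{\bar\epsilon}\}$. Since the original $\mathcal J$ is $\sqrt{-1}\sum(X_{i}\otimes x_{i}+\bar x_{i}\otimes\bar X_{i})-\sqrt{-1}\sum(\bar X_{i}\otimes \bar x_{i}+x_{i}\otimes X_{i})$ in the dual basis language of $(\g\oplus\g^{\ast})\otimes\C$, replacing each generator by its $\epsilon$- or $\bar\epsilon$-transformed version yields the displayed formula for the deformed $\mathcal J$. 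The only real obstacle is verifying $\bar\ast_{g}$-stability of $C^{\ast}_{\Gamma}$, but the display preceding the corollary already handles it; the rest is a direct application of previously established results, parallel in form to Corollary \ref{DEFCST11}.
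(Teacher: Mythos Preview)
Your proposal is correct and matches the paper's approach exactly: the paper simply writes ``By Theorem \ref{COMPAR}, as the last section, we have'' before stating the corollary, meaning it invokes precisely the combination of Theorem \ref{COMPAR}, the $\bar\ast_{g}$-stability computed just before, Theorem \ref{GDEF}, and Lemma \ref{RED}, in direct parallel to the proof of Corollary \ref{DEFCST11}. You have spelled out all of these ingredients faithfully.
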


We consider the following condition:
\\
\ 

(${\mathcal E}_{r}$) For $r\in \N$, for any $I\subset [n]$ with $\vert I\vert \le r$,  $\left(\frac{\bar\alpha_{I}}{\alpha_{I}}\right)_{ \vert_{\Gamma}}=1$ if and only if $\alpha_{I}=1$.
\\
\ \\
If the condition (${\mathcal E}_{r}$) holds, then for $p\le r$, we have
\[B^{q}_{\Gamma}=\left\langle \bar x_{I} \vert\alpha_{I}=1,\; \vert I \vert=q \right\rangle
\]
and hence we have $B^{p}_{\Gamma}\subset \textstyle\bigwedge^{p} \g_{0,1}^{\ast}$.
Thus we have:
\begin{theorem}\label{linpp}
Suppose that the condition (${\mathcal E}_{2}$) holds.
Then  any sufficiently small deformation of the generalized complex structure $\mathcal J$ which is given by the complex Lie group $G$  is  equivalent to a  generalized complex structure on $G/\Gamma$ which is induced by a left-invariant generalized complex structure on $G$.
\end{theorem}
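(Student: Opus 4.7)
The plan is to combine the Kuranishi-theoretic reduction of Corollary \ref{DEFCST} with the explicit description of $B^q_\Gamma$ supplied by the hypothesis $(\mathcal E_2)$, which was tailored precisely for this purpose. By Corollary \ref{DEFCST}, after passing to an equivalent deformation we may assume that the deformed structure is the one produced from a parameter $\epsilon\in C^2_\Gamma$ by the formula displayed there. Therefore it suffices to prove the inclusion
\[
C^2_\Gamma \subset dG^2(\g,J),
\]
for then $\epsilon$ is a left-invariant section of $\bigwedge^2\bar L_{\mathcal J}$, so $L_\epsilon=(1+\epsilon)L_{\mathcal J}$ is a left-invariant maximally isotropic subbundle, i.e.\ $\mathcal J_\epsilon$ is induced from a left-invariant generalized complex structure on $G$.

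To verify the inclusion I would use the bigrading
\[
C^2_\Gamma \;=\; \bigl(\textstyle\bigwedge^2 \g_{1,0}\bigr)\;\oplus\;\bigl(\g_{1,0}\otimes B^1_\Gamma\bigr)\;\oplus\; B^2_\Gamma.
\]
The first summand lies in $dG^2(\g,J)$ by definition. For the remaining two summands I invoke the observation made immediately before the theorem: if $(\mathcal E_r)$ holds then for $q\le r$ one has $B^q_\Gamma=\langle \bar x_I \mid \alpha_I=1,\;|I|=q\rangle\subset \bigwedge^q \g_{0,1}^*$. Applied with $r=2$, this shows $B^1_\Gamma\subset \g_{0,1}^*$ and $B^2_\Gamma\subset \bigwedge^2 \g_{0,1}^*$, both of which consist of left-invariant anti-holomorphic forms. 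Hence each of the three summands of $C^2_\Gamma$ sits inside $dG^2(\g,J)$, which establishes the required inclusion.

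The principal work of the theorem has already been absorbed into the earlier machinery: the reduction of an arbitrary small deformation to one with parameter in the finite-dimensional sub-DGA $C^*_\Gamma$ is Corollary \ref{DEFCST}, which in turn relies on the Hodge-theoretic Lemma \ref{RED} together with the stability $\bar\ast_g(C^*_\Gamma)\subset C^*_\Gamma$ verified just before that corollary. Consequently the only new input here is the elementary translation of $(\mathcal E_2)$ into a structural statement about $B^q_\Gamma$, and there is no substantive obstacle left to overcome.
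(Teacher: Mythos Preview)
Your proposal is correct and follows exactly the paper's own argument: the paper observes, immediately before the theorem, that $(\mathcal E_r)$ forces $B^q_\Gamma\subset\bigwedge^q\g_{0,1}^*$ for $q\le r$, and then states the theorem as a direct consequence (``Thus we have:''). You have simply made explicit the decomposition $C^2_\Gamma=\bigwedge^2\g_{1,0}\oplus(\g_{1,0}\otimes B^1_\Gamma)\oplus B^2_\Gamma$ and the appeal to Corollary~\ref{DEFCST} that the paper leaves to the reader.
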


\subsection{Generalized deformations and holomorphic Poisson cohomology of Nakamura manifolds}\label{nakho}
\begin{example}
Let $G=\C\ltimes_{\phi} \C^{2}$ such that \[\phi(z)=\left(
\begin{array}{cc}
e^{z}& 0  \\
0&    e^{-z}  
\end{array}
\right).\]
Then we have $a+\sqrt{-1}b, c+\sqrt{-1}d\in \C$ such that $ \Z(a+\sqrt{-1}b)+\Z(c+\sqrt{-1}d)$ is a lattice in $\C$ and
$\phi(a+\sqrt{-1}b)$ and $\phi(c+\sqrt{-1}d)$
 are conjugate to elements of $SL(4,\Z)$ where we regard  $SL(2,\C)\subset SL(4,\R)$ (see \cite{Hd}).
Hence we have a lattice $\Gamma=(\Z(a+\sqrt{-1}b)+\Z( c+\sqrt{-1}d))\ltimes_{\phi} \Gamma^{\prime\prime}$ such that $\Gamma^{\prime\prime}$ is a lattice of $\C^{2}$.

We take $C=\C$.
For a coordinate $(z_{1},z_{2},z_{3})\in \C\ltimes_{\phi} \C^{2}$,  we have the basis $\frac{\partial}{\partial z_{1}}, e^{z_{1}}\frac{\partial}{\partial z_{2}}, e^{-z_{1}}\frac{\partial}{\partial z_{3}}$ of $\g_{1,0}$ such that $({\rm Ad}_{(z_{1},})_{s}={\rm diag}(1,e^{z_{1}},e^{-z_{1}})$.

 If $b\not \in \pi\Z$ or $c\not \in\pi\Z$, then the condition (${\mathcal E}_{2}$) holds and we have
\[B_{\Gamma}^{1}=\langle d\bar z_{1}\rangle,\]
\[B_{\Gamma}^{2}=\langle d\bar z_{2}\wedge d\bar z_{3}\rangle,\]
\[B_{\Gamma}^{3}=\langle d\bar z_{1} \wedge d\bar z_{2}\wedge d\bar z_{3}\rangle.\]
For the Hermitian metric $g=dz_{1}d\bar z_{1}+e^{-z_{1}-\bar z_{1}}dz_{2}d\bar z_{2}+e^{z_{1}+\bar z_{1}}dz_{3}1d\bar z_{3}$, we have  ${\mathcal H}^{\ast}_{g}(G/\Gamma,J)=C^{\ast}_{\Gamma}$ and the space ${\mathcal H}^{\ast}_{g}(G/\Gamma,J)$ is closed under the Schouten bracket   (see the proof of Corollary \ref{2dee}).
Hence by Proposition \ref{Forsm}, for parameters 
\[t=(t_{12},t_{13},t_{23}, t_{1}^{\bar1},t_{2}^{\bar1}, t_{3}^{\bar1},t^{\bar2\bar3}),\]
considering
\begin{multline*}
\phi(t)=t_{12}e^{z_{1}}\frac{\partial}{\partial z_{1}}\wedge \frac{\partial}{\partial z_{2}}+t_{13}e^{-z_{1}}\frac{\partial}{\partial z_{1}}\wedge \frac{\partial}{\partial z_{3}}+t_{23}\frac{\partial}{\partial z_{1}}\wedge \frac{\partial}{\partial z_{3}}\\
+t_{1}^{\bar1}\frac{\partial}{\partial z_{1}}\wedge d\bar z_{1}+t_{2}^{\bar1}e^{z_{1}}\frac{\partial}{\partial z_{2}}\wedge d\bar z_{1}+t_{3}^{\bar1}e^{-z_{1}}\frac{\partial}{\partial z_{3}}\wedge d\bar z_{1}\\
+t^{\bar2\bar3}d\bar z_{2}\wedge d\bar z_{3},
\end{multline*}
we have 
\[
\Kur^{gen}(G/\Gamma, J)
=\{t=(t_{12},t_{13},t_{23}, t_{1}^{\bar1},t_{2}^{\bar1}, t_{3}^{\bar1},t^{\bar2\bar3})\vert [\phi(t)\bullet\phi(t)]=0\}.
\]
Thus $\Kur^{gen}(G/\Gamma, J)$ is defined by the following quadratic polynomial equations
\[t_{12}t_{13}=0,\, t_{12}t_{1}^{\bar1}=0,\,  t_{12}t_{3}^{\bar1}+t_{13}t_{2}^{\bar1}=0,\, t_{13}t_{1}^{\bar1}=0.
\]
\begin{remark}
In this case, the ordinary Kuranishi space $\Kur(G/\Gamma, J)$ is given by
\[
\Kur(G/\Gamma, J)
=\{t=(0,0,0, t_{1}^{\bar1},t_{2}^{\bar1}, t_{3}^{\bar1},0)\vert [\phi(t)\bullet\phi(t)]=0\}
\]
and hence $\Kur(G/\Gamma, J)$ is unobstructed.
But $\Kur^{gen}(G/\Gamma, J)$ is obstructed.
\end{remark}

If $b \in \pi\Z$ and $c \in\pi\Z$, then  (${\mathcal E}_{2}$) does not hold.
In this case we have $B^{\ast}_{\Gamma}=\textstyle\bigwedge\langle d\bar z_{1}, e^{-z_{1}}d\bar z_{2}, e^{z_{1}}d\bar z_{3}\rangle$.
Hence we have 
\[C^{p,q}_{\Gamma}= \textstyle\bigwedge^{p} \g_{1,0} \otimes  \bigwedge^{q}\langle d\bar z_{1}, e^{-z_{1}}d\bar z_{2}, e^{z_{1}}d\bar z_{3}\rangle.\]
For the Hermitian metric $g=dz_{1}d\bar z_{1}+e^{-z_{1}-\bar z_{1}}dz_{2}d\bar z_{2}+e^{z_{1}+\bar z_{1}}dz_{3}1d\bar z_{3}$, we have  ${\mathcal H}^{\ast}_{g}(G/\Gamma,J)=C^{\ast}_{\Gamma}$ and the space ${\mathcal H}^{\ast}_{g}(G/\Gamma,J)$ is closed under the Schouten bracket   (see the proof of Corollary \ref{2dee}).
Hence by Proposition \ref{Forsm}, for parameters 
\[t=(t_{12},t_{13},t_{23}, t_{1}^{\bar1},t_{1}^{\bar2},t_{1}^{\bar3},t_{2}^{\bar1}, t_{2}^{\bar2},t_{2}^{\bar3},t_{3}^{\bar1},t_{3}^{\bar 2},t_{3}^{\bar3}, t^{\bar1\bar2},t^{\bar1\bar3},t^{\bar2\bar3}),\]
considering 
\begin{multline*}
\phi(t)=t_{12}e^{z_{1}}\frac{\partial}{\partial z_{1}}\wedge \frac{\partial}{\partial z_{2}}+t_{13}e^{-z_{1}}\frac{\partial}{\partial z_{1}}\wedge \frac{\partial}{\partial z_{3}}+t_{23}\frac{\partial}{\partial z_{1}}\wedge \frac{\partial}{\partial z_{3}}\\
+t_{1}^{\bar1}\frac{\partial}{\partial z_{1}}\wedge d\bar z_{1}+t_{1}^{\bar2}e^{-z_{1}}\frac{\partial}{\partial z_{1}}\wedge d\bar z_{2}+t_{1}^{\bar3}e^{z_{1}}\frac{\partial}{\partial z_{1}}\wedge d\bar z_{3}\\
+t_{2}^{\bar1}e^{z_{1}}\frac{\partial}{\partial z_{2}}\wedge d\bar z_{1}+t_{2}^{\bar2}\frac{\partial}{\partial z_{2}}\wedge d\bar z_{2}+t_{2}^{\bar3}e^{2z_{1}}\frac{\partial}{\partial z_{2}}\wedge d\bar z_{3}\\
+t_{3}^{\bar1}e^{-z_{1}}\frac{\partial}{\partial z_{3}}\wedge d\bar z_{1}+t_{3}^{\bar2}e^{-2z_{1}}\frac{\partial}{\partial z_{3}}\wedge d\bar z_{2}+t_{3}^{\bar3}\frac{\partial}{\partial z_{3}}\wedge d\bar z_{3}\\
+t^{\bar1\bar2}e^{-z_{1}}d\bar z_{1}\wedge d\bar z_{2}+t^{\bar1\bar3}e^{z_{1}}d\bar z_{1}\wedge d\bar z_{3}+t^{\bar2\bar3}d\bar z_{2}\wedge d\bar z_{3},
\end{multline*}
we have 
\begin{multline*}
\Kur^{gen}(G/\Gamma, J)\\
=\{t=(t_{12},t_{13},t_{23}, t_{1}^{\bar1},t_{1}^{\bar2},t_{1}^{\bar3},t_{2}^{\bar1}, t_{2}^{\bar2},t_{2}^{\bar3},t_{3}^{\bar1},t_{3}^{\bar 2},t_{3}^{\bar3}, t^{\bar1\bar2},t^{\bar1\bar3},t^{\bar2\bar3})\vert [\phi(t)\bullet\phi(t)]=0\}.
\end{multline*}
Thus $\Kur^{gen}(G/\Gamma, J)$ is defined by the following quadratic polynomial equations (compare with \cite[(3.3)]{Na}):
\begin{eqnarray*}
t_{12}t_{13}=0,\, t_{12}t_{1}^{\bar1}=0,\, t_{12}t_{1}^{\bar2}=0,\, t_{12}t_{3}^{\bar1}+t_{13}t_{2}^{\bar1}=0,\\
t_{12}t_{3}^{\bar2}=0, \, t_{12}t^{\bar1\bar2}-t_{1}^{\bar2}t_{2}^{\bar1}=0,\, -t_{12}t^{\bar1\bar3}+2t_{1}^{\bar1}t_{2}^{\bar3}-t_{1}^{\bar3}t_{2}^{\bar1}=0,\\
t_{13}t_{1}^{\bar1}=0,\, t_{13}t_{1}^{\bar3} =0,\, t_{13}t_{2}^{\bar3}=0,\, %t_{13}t^{\bar1\bar2}+t_{1}^{\bar2}t_{3}^{\bar1}=0,
\\
t_{13}t^{\bar1\bar2}-2t_{1}^{\bar1}t_{3}^{\bar2}+t_{1}^{\bar2}t_{3}^{\bar 1}=0,\, t_{1}^{\bar3}t_{3}^{\bar1}=0,\,t_{1}^{\bar1}t_{1}^{\bar2}=0,\, t_{1}^{\bar1}t_{1}^{\bar3}=0,\\
t_{1}^{\bar2}t_{1}^{\bar3}=0,\, t_{1}^{\bar2}t_{2}^{\bar3}=0,\, t_{1}^{\bar3}t_{3}^{\bar2}=0,\, t_{1}^{\bar2}t^{\bar1\bar3}+t_{1}^{\bar3}t^{\bar1\bar2}=0.
\end{eqnarray*}

In this case, the condition (${\mathcal E}_{2}$) does not hold.
It is known that there exists a small deformation of $J$ which can not be induced by a $G$-left-invariant complex structure on $G$
(see \cite{Na}, \cite{Hd}).

We consider the holomorphic Poisson bi-vector field
\[\mu=e^{z_{1}}\frac{\partial}{\partial z_{1}}\wedge \frac{\partial}{\partial z_{2}}.
\]
Then by Theorem \ref{popar}, the cohomology $H^{\ast}(G/\Gamma, \mu)$ is isomorphic to the cohomology of the cochain complex $(C_{\Gamma}^{\ast}, [\mu\bullet])$.
We compute:
\[H^{1}(G/\Gamma, \mu)=\left\langle \left[e^{z_{1}}\frac{\partial}{\partial z_{2}}\right],\, \left[d\bar z_{1}\right]\right\rangle ,
\]
\[H^{2}(G/\Gamma, \mu)=\left\langle \left[e^{z_{1}}\frac{\partial}{\partial z_{1}}\wedge d\bar z_{3}\right],\,\left[e^{z_{1}}\frac{\partial}{\partial z_{2}}\wedge d\bar z_{1}\right], \left[\frac{\partial}{\partial z_{3}}\wedge d\bar z_{3}\right],  \left[d\bar z_{2}\wedge d \bar z_{3}\right]\right\rangle, 
\]
\begin{multline*}
H^{3}(G/\Gamma, \mu)=\left\langle
 \left[e^{2z_{1}}\frac{\partial}{\partial z_{1}}\wedge \frac{\partial}{\partial z_{2}}\wedge d\bar z_{3}\right],\,\left[e^{z_{1}}\frac{\partial}{\partial z_{2}}\wedge \frac{\partial}{\partial z_{3}}\wedge d\bar z_{3}\right], \right.
\\
\left. \left[e^{z_{1}}\frac{\partial}{\partial z_{1}}\wedge d\bar z_{1}\wedge d\bar z_{3}\right],
 \left[e^{z_{1}}\frac{\partial}{\partial z_{2}}\wedge d\bar z_{2}\wedge d \bar z_{3}\right], \left[d\bar z_{1}\wedge d\bar z_{2}\wedge d \bar z_{3}\right]
\right\rangle, 
\end{multline*}
\[H^{4}(G/\Gamma, \mu)=\left\langle
 \left[e^{2z_{1}}\frac{\partial}{\partial z_{1}}\wedge \frac{\partial}{\partial z_{2}}\wedge d \bar z_{1}\wedge d\bar z_{3}\right],\,\left[e^{z_{1}}\frac{\partial}{\partial z_{2}}\wedge d \bar z_{1}\wedge d \bar z_{2} \wedge d\bar z_{3}\right],\right\rangle,
\]
\[H^{5}(G/\Gamma, \mu)=0,
\]
\[H^{6}(G/\Gamma, \mu)=0.
\]
Obviously the cohomology $H^{\ast}(G/\Gamma, \mu)$ is different from  both the de Rham cohomology and the Dolbeault cohomology. 

\begin{remark}
It is shown that the holomorphic Poisson cohomology of a complex parallelizable nilmanifold $G/\Gamma$  with a holomorphic Poisson structure $\mu$ is decomposed as
\[H^{r}(G/\Gamma, \mu)\cong \bigoplus_{p+q=r}  H^{p}_{[\mu\bullet]}(\g_{1,0})\otimes H^{q}_{\bar\partial} (\g_{0,1}^{\ast})
\]
(see \cite{CGP}).
On the above computations, it is difficult to find such decomposability.
By our example, we can say that the holomorphic Poisson cohomology of a complex parallelizable solvmanifold is more complicated than the holomorphic Poisson cohomology of a complex parallelizable nilmanifold.
\end{remark}
\end{example}

\section{Metabelian case}\label{FORMA}
We consider the following assumption.
\begin{Assumption}\label{asasa}
 $\bullet$ $G=\C^{n}\ltimes_{\phi} \C^{m}$ with a semi-simple action $\phi\colon \C^{n}\to GL_{m}(\C)$ (not necessarily holomorphic).
Then we have a coordinate $z_{1},\dots ,z_{n}, w_{1},\dots ,w_{m}$ of $\C^{n}\ltimes_{\phi} \C^{m}$ such that 
\[\phi(z_{1},\dots ,z_{n})(w_{1},\dots ,w_{m})=(\alpha_{1} w_{1},\dots ,\alpha_{m} w_{m})
\]
where $\alpha_{i}$ are $C^{\infty}$-characters of $\C^{n}$.\\
$\bullet$ $G$ has a lattice $\Gamma$ and
 a lattice $\Gamma$ of  $G=\C^{n}\ltimes_{\phi} \C^{m}$ is the form $\Gamma^{\prime}\ltimes_{\phi} \Gamma^{\prime\prime}$ such that $\Gamma^{\prime}$ and $\Gamma^{\prime\prime}$ are lattices of $\C^{n}$ and $\C^{m}$ respectively and the action $\phi$ of $\Gamma^{\prime}$ preserves $\Gamma^{\prime\prime}$.\\
$\bullet$ $\alpha_{[m]}=1$.
\end{Assumption}
Then this assumption is a special case of Assumption \ref{Ass} such that $(N,J)$ is a complex Abelian Lie group  with Assumption \ref{unim}.

We take unitary characters $\beta_{i}$ and $\gamma_{i}$ such that $\alpha_{i}\beta_{i}^{-1}$ and $\bar\alpha\gamma^{-1}_{i}$ are holomorphic.
In this case, $C^{p,q}_{\Gamma} $ as in Theorem \ref{DGAI} is given by
\[C^{p,q}_{\Gamma}=\left\langle \frac{\partial}{\partial z_{I}}\wedge \alpha_{J}\beta^{-1}_{J}\frac{\partial}{\partial w_{J}}\otimes d\bar z_{K}\wedge \bar \alpha^{-1}_{L}\gamma_{L}d\bar w_{L}{\Big \vert} \begin{array}{cc}\vert I\vert+\vert K\vert=p,\, \vert J\vert+\vert L\vert=q \\ (\beta_{J}\gamma_{L})_{\vert_\Gamma}=1\end{array}\right\rangle.
\]

By Theorem \ref{DGAI}, we prove the following corollary.
\begin{corollary}\label{2dee}
Let $(G,\Gamma)$  be  as in Assumption \ref{asasa}.
Consider the Hermitian metric $g=\sum dz_{i}  d\bar z_{i}+  \sum \alpha^{-1}_{i}\bar\alpha^{-1}_{i}dw_{i}d\bar w_{i}$.
Then we have ${\mathcal H}^{\ast}_{g}(G/\Gamma,J)=C^{\ast}_{\Gamma}$ and hence   ${\mathcal H}^{\ast}_{g}(G/\Gamma,J)$ is a sub-DGA of $dG^{\ast}(G/\Gamma,J)$.
In particular, by Proposition \ref{Forsm},  $\Kur^{gen}(G/\Gamma, J)$ is cut out by polynomial equations of degree 
at most $2$.
\end{corollary}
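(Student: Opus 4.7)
The plan is to identify the harmonic space $\mathcal{H}_g^*(G/\Gamma, J)$ explicitly with $C^*_\Gamma$, after which the remaining conclusions follow formally: the sub-DGA structure from Theorem \ref{DGAI}, and the quadratic description of $Kur^{gen}(G/\Gamma, J)$ from Proposition \ref{Forsm}.

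The key step is to show that $\bar\partial$ vanishes identically on $C^*_\Gamma$. Since $N = \C^m$ is abelian under Assumption \ref{asasa}, the holomorphic frame $\partial/\partial z_i, \partial/\partial w_j$ consists of holomorphic vector fields and the conjugate coframe $d\bar z_i, d\bar w_j$ is $\bar\partial$-closed, so it suffices to check that the scalar prefactors appearing in a generator of $C^*_\Gamma$ are all holomorphic. Now $\alpha_J \beta_J^{-1}$ is holomorphic by construction of $\beta_J$ via Lemma \ref{charr}, while $\bar\alpha_L^{-1} \gamma_L$ is the reciprocal of the nowhere-vanishing holomorphic function $\bar\alpha_L \gamma_L^{-1}$ (holomorphic by construction of $\gamma_L$), hence also holomorphic. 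Thus every generator
\[
\frac{\partial}{\partial z_I} \wedge \alpha_J \beta_J^{-1} \frac{\partial}{\partial w_J} \otimes d\bar z_K \wedge \bar\alpha_L^{-1} \gamma_L \, d\bar w_L
\]
of $C^*_\Gamma$ is $\bar\partial$-closed.

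Next I would combine this with the invariance $\bar\ast_g(C^*_\Gamma) \subseteq C^*_\Gamma$ already verified in Section \ref{spdeff}: the identity $\bar\partial^* = -\bar\ast_g \bar\partial \bar\ast_g$ immediately gives $\bar\partial^*|_{C^*_\Gamma} = 0$, so $\Box_g$ vanishes on $C^*_\Gamma$ and $C^*_\Gamma \subseteq \mathcal{H}_g^*(G/\Gamma, J)$. The reverse inclusion is a dimension count: by Theorem \ref{DGAI}, $H^*(C^*_\Gamma) \cong H^*_{\bar\partial}(dG^*(G/\Gamma, J))$, and the left-hand side equals $C^*_\Gamma$ since $\bar\partial$ vanishes on it, while the right-hand side has dimension $\dim \mathcal{H}_g^*(G/\Gamma, J)$ by Hodge theory. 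Hence $\mathcal{H}_g^*(G/\Gamma, J) = C^*_\Gamma$, which is a sub-DGA by Theorem \ref{DGAI}, so in particular closed under the Schouten bracket; Proposition \ref{Forsm} then cuts $Kur^{gen}(G/\Gamma, J)$ by polynomial equations of degree at most $2$ as claimed.

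The main obstacle is the first step: one must carefully unpack the coefficients in the defining generators of $C^*_\Gamma$ and use both instances of Lemma \ref{charr} (applied separately to produce $\beta_J$ from $\alpha_J$ and $\gamma_L$ from $\bar\alpha_L$) to confirm holomorphicity of the combinations that actually appear. The abelianness of $N$ in Assumption \ref{asasa} is likewise essential here, since it eliminates any Lie-algebraic contribution to $\bar\partial$ on $\bigwedge \n_{1,0}$ that could otherwise spoil the argument.
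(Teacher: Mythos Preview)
Your proposal is correct and follows essentially the same route as the paper's own proof: first show $\bar\partial\vert_{C^{\ast}_{\Gamma}}=0$ via holomorphicity of the prefactors $\alpha_{J}\beta_{J}^{-1}$ and $\bar\alpha_{L}^{-1}\gamma_{L}$, then use $\bar\ast_{g}(C^{\ast}_{\Gamma})\subset C^{\ast}_{\Gamma}$ from Section~\ref{spdeff} to get $C^{\ast}_{\Gamma}\subset\mathcal{H}^{\ast}_{g}(G/\Gamma,J)$, and finally invoke Theorem~\ref{DGAI} for the reverse inclusion. Your write-up is in fact more explicit than the paper's at two points---you spell out the separate appeals to Lemma~\ref{charr} for $\beta$ and $\gamma$, and you make the dimension count underlying the reverse inclusion visible---but the argument is the same.
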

\begin{proof}
Since for each mult-indices $I$, $J$, $K$ and $L$ the characters $\alpha_{J}\beta^{-1}_{J}$ and $\bar \alpha^{-1}_{L}\gamma_{L}$ are holomorphic,
the operator $\bar \partial$ on  $C^{\ast}_{\Gamma}$ is $0$.

For the $\C$-anti-linear Hodge star operator 
\[\bar\ast_{g}\colon  A^{0,q}(G/\Gamma,\textstyle\bigwedge^{p} T^{1,0}G/\Gamma)\to A^{0,n+m-q}(G/\Gamma,\bigwedge^{n+m-p} T^{1,0}G/\Gamma),\]
 we have $\bar\ast_{g}(C_{\Gamma}^{\ast})\subset C_{\Gamma}^{\ast}$ as Section \ref{spdeff}.
Hence we have $\bar \ast_{g}\bar\partial\bar\ast_{g}(C^{\ast}_{\Gamma})=0$.
Hence we have $C^{\ast}_{\Gamma}\subset {\mathcal H}^{\ast}_{g}(G/\Gamma,J)$.
By Theorem \ref{DGAI},
we have ${\mathcal H}^{\ast}_{g}(G/\Gamma,J)=C^{\ast}_{\Gamma}$.
Hence the corollary follows
\end{proof}

We have:
\begin{theorem}\label{SMSS}
Let $(G,\Gamma)$  be  as in Assumption  \ref{asasa}.
We assume that for any $J,L\subset [m]$,  $(\beta^{-1}_{J}\gamma_{L})_{\vert_{\Gamma}}=1$ if and only if $(\alpha_{J}\bar\alpha^{-1}_{L})_{\vert_{\Gamma}}=1$.
Then we have

$\bullet$ ${\mathcal H}^{\ast}_{g}(G/\Gamma,J)$ is a sub-DGA of $dG^{\ast}(G/\Gamma,J)$ with trivial  brackets.
In particular, by Proposition \ref{Forsm},  $\Kur^{gen}(G/\Gamma,J)$ is smooth.

$\bullet$ Let $\mu \in C^{2,0}_{\Gamma}$ be a holomorphic Poisson bi-vector field.
Then the holomorphic Poisson cohomology $H^{\ast}(G/\Gamma,\mu)$ is isomorphic to the  cohomology 
$H^{\ast}(dG^{\ast}(G/\Gamma,J))$.

\end{theorem}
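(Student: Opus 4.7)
The plan is to reduce both assertions to a single observation: under the new hypothesis, every generator of $C^{*}_{\Gamma}$ has constant coefficient in the coordinates $z_{i}, w_{j}$ on $\C^{n}\ltimes_{\phi}\C^{m}$, and therefore the Schouten bracket and $\bar\partial$ both vanish on $C^{*}_{\Gamma}$.

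A generator of $C^{p,q}_{\Gamma}$ in these coordinates reads
\[\frac{\partial}{\partial z_{I}}\wedge \alpha_{J}\beta^{-1}_{J}\frac{\partial}{\partial w_{J}}\otimes d\bar z_{K}\wedge \bar\alpha^{-1}_{L}\gamma_{L}\, d\bar w_{L},\]
whose overall scalar factor is $(\alpha_{J}\bar\alpha^{-1}_{L})(\beta^{-1}_{J}\gamma_{L})$. The defining condition $(\beta^{-1}_{J}\gamma_{L})_{\vert_{\Gamma}}=1$ combined with the new hypothesis forces $(\alpha_{J}\bar\alpha^{-1}_{L})_{\vert_{\Gamma}}=1$, so $\alpha_{J}\bar\alpha^{-1}_{L}$ is unitary, and by the uniqueness part of Lemma \ref{charr} it coincides with $\beta_{J}\gamma^{-1}_{L}$, exactly as in the proof of Theorem \ref{smsppp}. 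Hence the scalar factor equals $1$ and the generator collapses to the constant-coefficient element $\frac{\partial}{\partial z_{I}}\wedge \frac{\partial}{\partial w_{J}}\otimes d\bar z_{K}\wedge d\bar w_{L}$.

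Since the vector fields $\partial/\partial z_{i}, \partial/\partial w_{j}$ pairwise commute and the forms $d\bar z_{i}, d\bar w_{j}$ are closed, the Schouten bracket of any two such constant-coefficient generators vanishes; equivalently, $C^{*}_{\Gamma}$ sits inside the DGA of the abelian complex Lie group $\C^{n}\times \C^{m}$, which has trivial Schouten bracket. Combining this with Corollary \ref{2dee}, the subspace ${\mathcal H}^{*}_{g}(G/\Gamma,J)=C^{*}_{\Gamma}$ is a sub-DGA with trivial Schouten bracket, and the second half of Proposition \ref{Forsm} then yields the smoothness of $Kur^{gen}(G/\Gamma, J)$.

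For the second bullet, $\bar\partial$ already vanishes on $C^{*}_{\Gamma}$ by the proof of Corollary \ref{2dee}, and $[\mu\bullet\cdot]$ vanishes on $C^{*,*}_{\Gamma}$ by the first bullet since $\mu\in C^{2,0}_{\Gamma}$. Hence the double complex $(C^{*,*}_{\Gamma},\bar\partial,[\mu\bullet])$ has zero total differential, so its total cohomology equals $C^{*}_{\Gamma}$; the corollary immediately following Theorem \ref{DGAI} identifies this total cohomology with $H^{*}(G/\Gamma,\mu)$, while Theorem \ref{DGAI} together with $\bar\partial_{\vert_{C^{*}_{\Gamma}}}=0$ gives $H^{*}(dG^{*}(G/\Gamma,J))\cong C^{*}_{\Gamma}$, yielding the desired isomorphism. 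The only substantive step is the scalar reduction to $1$ in the second paragraph, a direct application of Lemma \ref{charr}; the remainder merely assembles Corollary \ref{2dee}, Proposition \ref{Forsm}, Theorem \ref{DGAI}, and the holomorphic Poisson cohomology corollary.
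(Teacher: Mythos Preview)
Your argument is correct and follows essentially the same route as the paper: reduce the scalar factor $(\alpha_{J}\bar\alpha_{L}^{-1})(\beta_{J}^{-1}\gamma_{L})$ to $1$ via the uniqueness in Lemma \ref{charr} (exactly the step appearing in the proof of Theorem \ref{smsppp}), obtain the constant-coefficient description of $C^{p,q}_{\Gamma}$, and deduce that both the Schouten bracket and $[\mu\bullet\,]$ vanish on $C^{*}_{\Gamma}$. The paper's own proof is just a terser version of this, stating the simplified form of $C^{p,q}_{\Gamma}$ directly and reading off the two assertions.
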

\begin{proof}
We have
\[C^{p,q}_{\Gamma}=\left\langle \frac{\partial}{\partial z_{I}}\wedge \frac{\partial}{\partial w_{J}}\wedge d\bar z_{K}\wedge d\bar w_{L}{\Big \vert} \begin{array}{cc}\vert I\vert+\vert K\vert=p,\, \vert J\vert+\vert L\vert=q  \\ ( \alpha_{J}\bar \alpha^{-1}_{L})_{\vert_{\Gamma}}=1\end{array}\right\rangle.
\]
Hence the first assertion follows.

For a  holomorphic Poisson bi-vector field $\mu\in C^{2,0}_{\Gamma}$, since $C^{p,q}$ is written as above, we can write
\[\mu= \sum_{\vert I\vert+\vert J\vert =2} a_{IJ} \frac{\partial}{\partial z_{I}}\wedge \frac{\partial}{\partial w_{J}}\]
for $a_{IJ}\in \C$.
Then the operator $[\mu\bullet]\colon C^{\ast,\ast}_{\Gamma}\to C^{\ast+1,\ast}_{\Gamma}$ is trivial.
Hence the second assertion follows.

\end{proof}

\begin{example}\label{TYY}
Let $H=\R^{n}\ltimes_{\psi} \R^{n+1}$ such that 
\[\psi(s_{1},\dots, s_{n})(t_{1},\dots, t_{n}, t_{n+1})=(e^{s_{1}}t_{1},\dots e^{s_{n}}t_{n}, e^{-s_{1}-\dots-s_{n}}t_{n+1}).
\]
For a totally real  algebraic number field $K$ with degree $n+1$, we can construct a lattice $\Delta$ in $H$ by the following way.
Let ${\mathcal O}_{K}$ be the subring of all algebraic integers in $K$ and ${\mathcal O}_{K}^{\ast}$ the unit group of ${\mathcal O}_{K}$. 
Then it is known that ${\mathcal O}_{K}$ can be regarded as a lattice in $\R^{n+1}$.
 By Dirichlet's unit theorem, we have a subgroup $\Gamma^{\prime}_{1}\subset {\mathcal O}_{K}^{\ast}$ such that 
 $\Gamma^{\prime}_{1}$ can be regarded as a lattice in $\R^{n}$ and the semi-direct product $ \Gamma^{\prime}_{1}\ltimes {\mathcal O}_{K}$ can be regarded as a lattice in  $H=\R^{n}\ltimes_{\psi} \R^{n+1}$ (see \cite[Section 3]{TY} for the detail).

Consider $G=\C^{n}\ltimes_{\phi}\C^{2n+2}$ such that 
\begin{multline*}
\phi(z_{1},\dots, z_{n})(w_{1,1},\dots,w_{1,n},w_{1,n+1},w_{2,1},\dots,w_{2,n},w_{2,n+1})\\
=(e^{x_{1}}w_{1,1},\dots, e^{x_{n}}w_{1,n}, e^{-x_{1}-\dots-x_{n}}w_{1,n+1},\\
 e^{-x_{1}}w_{2,1}, \dots, e^{-x_{n}}w_{2,n}, e^{x_{1}+\dots+x_{n}}w_{2,n+1})
\end{multline*}
for complex coordinate 
\[(z_{1}=x_{1}+\sqrt{-1}y_{1},\dots ,z_{n}=x_{n}+\sqrt{-1}y_{n},w_{1},\dots, w_{n+1}, w_{n+2},\dots, w_{2n+2}  ).\]
 By the above argument,
we obtain a lattice $\Gamma=(\Gamma^{\prime}_{1}+\Gamma^{\prime}_{2})\ltimes \Gamma^{\prime\prime}$   by using
a totally real  algebraic number field $K$ with degree $n+1$.
We have
\begin{multline*}(\beta_{1},\dots, \beta_{n}, \beta_{n+1},\beta_{1}^{-1},\dots,\beta_{n}^{-1},\beta_{n+1}^{-1})\\
=(e^{-\sqrt{-1}y_{1}},\dots, e^{-\sqrt{-1}y_{n}}, e^{\sqrt{-1}y_{1}+\dots+\sqrt{-1}y_{n}},\\
 e^{\sqrt{-1}y_{1}}, \dots, e^{\sqrt{-1}y_{n}}, e^{-\sqrt{-1}y_{1}-\dots-\sqrt{-1}y_{n}}).
\end{multline*}
By this we can take $ \Gamma^{\prime}_{2}$ such that  the assumptions of Theorem \ref{SMSS} hold. 
For example, take $\Gamma^{\prime}_{2}=\sqrt{-1}\Z^{n}$.
Hence for such $\Gamma$, $\Kur^{gen}(G/\Gamma,J)$ is smooth.
\end{example}

\begin{remark}
In \cite{Hn}, Hasegawa  showed that  a simply connected solvable Lie group $G$ with a  lattice $\Gamma$ such that $G/\Gamma$ admits a K\"ahler structure can be written as $G=\R^{2k}\ltimes _{\phi}\C^{l}$ such that
\[\phi(t_{j})((z_{1},\dots ,z_{l}))=(e^{\sqrt{-1}\theta^{j}_{1}t_{j}}z_{1},\dots ,e^{\sqrt{-1}\theta^{j}_{l}t_{j}}z_{l}),
\]
where each $e^{\sqrt{-1}\theta^{j}_{i}}$ is a root of unity.
In particular if $G$ is completely solvable and a solvmanifold $G/\Gamma$ admits a K\"ahler structure, then $G$ is abelian.
Hence  Example \ref{TYY} does not admit a  K\"ahler structure.
We notice that   Example \ref{TYY} satisfies the Hodge symmetry and decomposition (see \cite{KH}).
\end{remark}

\end{document}